\documentclass[reqno]{amsart}

\usepackage[dvipsnames]{xcolor}
\usepackage[margin=3cm]{geometry}

\usepackage{lmodern}

\usepackage[utf8]{inputenc}
\usepackage[T1]{fontenc}
\usepackage{textcomp}
\usepackage{amsmath, amssymb,mathrsfs,amsthm}
\usepackage{mathtools}
\usepackage{multirow}

\usepackage{tikz}
\usetikzlibrary{arrows}

\usepackage[linktocpage]{hyperref}
\hypersetup{	
  colorlinks=true,
  linkcolor=NavyBlue,
  anchorcolor=Blue,
  citecolor=Green,
  urlcolor=NavyBlue
}

\numberwithin{equation}{section}

\renewcommand{\d}{\,\mathrm{d}}

\newcommand{\un}{\ensuremath{\mathbf{1}}}

\newcommand\N{\ensuremath{\mathbb{N}}}
\newcommand\R{\ensuremath{\mathbb{R}}}
\newcommand\Z{\ensuremath{\mathbb{Z}}}

\newcommand\C{\ensuremath{\mathbb{C}}}

\newcommand\qc{\ensuremath{\mathscr{Q}}}
\newcommand\cc{\ensuremath{\mathcal{C}}}
\newcommand\dc{\ensuremath{\mathscr{D}}}

\newcommand{\ffi}{\varphi}
\newcommand{\eps}{\varepsilon}

\newtheorem{theorem}{Theorem}[section]
\newtheorem{lemma}[theorem]{Lemma}
\newtheorem{proposition}[theorem]{Proposition}

\newtheorem{corollary}[theorem]{Corollary}
\newtheorem{assumptionx}{Assumption}
\newenvironment{assumption}[1]
 {\renewcommand\theassumptionx{#1}\assumptionx}
 {\endassumptionx}
 
 \theoremstyle{definition}
 \newtheorem{remark}[theorem]{Remark}
 \newtheorem{definition}[theorem]{Definition}

\DeclareMathOperator{\Realpart}{Re}
\renewcommand{\Re}{\Realpart}

\newcommand{\norm}[1]{{\left\|{#1}\right\|}}
\newcommand{\abs}[1]{{\left|{#1}\right|}}
\newcommand{\scal}[1]{{\left\langle{#1}\right\rangle}}

\mathtoolsset{showonlyrefs}

\author{Yann Bourroux}
\address{Facultad de Ciencia y Tecnolog\'ia, Universidad del Pa\'is Vasco /Euskal Herriko Unibertsitatea (UPV/EHU), Departamento de Matem\'aticas, UPV/EHU, Apartado 644, 48080 Bilbao, Spain\\ \& Univ. Bordeaux, CNRS, Bordeaux INP, IMB, UMR 5251, F-33400 Talence, France}
\email{yann.bourroux@math.u-bordeaux.fr}

\author{Philippe Jaming}
\address{Univ. Bordeaux, CNRS, Bordeaux INP, IMB, UMR 5251, F-33400 Talence, France}
\email{philippe.jaming@math.u-bordeaux.fr}

\author{Yunlei Wang}
\address{Univ. Bordeaux, CNRS, Bordeaux INP, IMB, UMR 5251, F-33400 Talence, France and
Department of Mathematics, Louisiana State University, Baton Rouge, LA, 70803, USA (current address)}
\email{ywang30@lsu.edu}

\keywords{approximate null-controllability, discrete heat equation, spectral inequalities, Lebeau--Robbiano method, Carleman estimates, discrete Schrödinger operators, observability inequalities.}

\subjclass[2020]{Primary 93B05; Secondary 35K05, 35B45, 35R02, 35P15.}

\title[Controllability for semi-discrete heat equations]{Approximate null-controllability of discrete heat equations with potentials on lattices}

\begin{document}
\begin{abstract}
We investigate approximate null-controllability for semi-discrete heat equations on the lattice $h\mathbb Z^d$ with a potential. By establishing spectral inequalities for the discrete Schrödinger operator $P_h=-\Delta_h+V$ on equidistributed sets, we derive observability estimates via the Lebeau--Robbiano method and the Hilbert Uniqueness Method. 

For bounded potentials, we obtain quantitative controllability results with explicit dependence on the potential and show near optimality of the geometric condition on the observation set. We also treat polynomial growth potentials, for which similar properties hold with weaker control cost estimates. These results extend discrete Carleman techniques to the full-space lattice setting and provide new spectral estimates for discrete Schrödinger operators.
\end{abstract}

\maketitle

\tableofcontents

\section{Introduction}
In this paper we study controllability properties of the heat equation on the lattice $h\mathbb Z^d$ in the presence of a potential. More precisely, we consider semi-discrete heat equations associated with the discrete Schrödinger operator 
$$
P_h=-\Delta_h+V
$$
and investigate approximate null-controllability from sets that are equidistributed in space. Our main result shows that when the potential is bounded or has at most polynomial growth, approximate null-controllability holds for sufficiently small mesh size, with an explicit approximation error depending on $h$. 

Let us describe the setting. Let $h>0$ denote the mesh size and consider the standard discretization of the Laplacian on $\mathbb R^d$. For a function $u:h\mathbb Z^d\to\mathbb R$ and $x\in h\mathbb Z^d$, we define
\[
\Delta_h u(x)=\frac{1}{h^2}\sum_{j=1}^d\bigl(u(x+he_j)+u(x-he_j)-2u(x)\bigr),
\]
where $(e_j)_{j=1,\dots,d}$ denotes the canonical basis of $\mathbb R^d$. Equivalently, $-h^2\Delta_h$ coincides with the graph Laplacian on the lattice $h\mathbb Z^d$. Given a real-valued potential $V:\mathbb R^d\to\mathbb R$, we study approximate null-controllability properties for the semi-discrete heat equation
\[
\partial_t u=-P_hu:=\Delta_h u - Vu.
\]

Understanding controllability properties of discretized parabolic equations is a central issue both for numerical analysis and control theory (see e.g. \cite{boyer} and references therein). In contrast with the continuous setting, discretization may introduce spurious high-frequency phenomena and weaken unique continuation properties, which makes uniform controllability with respect to the mesh size a delicate question. 

While semi-discrete controllability has been extensively studied on bounded domains, much less is known in the full-space setting, especially in the presence of lower-order terms. The purpose of this paper is to address this question for discrete Schrödinger operators on lattices.

\smallskip

\paragraph{\bf Main contributions.} 
\begin{itemize}
    \item We prove a spectral inequality for discrete Schrödinger operators on $h\Z^d$
 with bounded or polynomial growth potentials over equidistributed sets.

 \item Using the Lebeau–Robbiano method, we deduce approximate null-controllability results with explicit error bounds uniform with respect to the mesh size.

 \item Our analysis extends the semi-discrete Carleman framework of Boyer–Hubert–Le Rousseau to the full-space setting and to operators with potential.

 \item In the bounded potential case, we obtain explicit decay of the controllability error
 and show that the condition on the controllability set is essentially optimal.
\end{itemize}

Our approach relies on quantitative unique continuation estimates obtained through Carleman inequalities. More precisely, we adapt the semi-discrete Carleman framework introduced by Boyer, Hubert, and Le~Rousseau to derive spectral inequalities for the operator $P_h$. These spectral estimates are then combined with the Lebeau--Robbiano method and the Hilbert Uniqueness Method to establish observability and controllability results. The main difficulty lies in handling the presence of the potential in the full-space discrete setting, where the spectral parameter interacts nontrivially with the discretization scale.

\smallskip

\subsection*{Relation to the literature.}\ \\
Null-controllability and observability properties for parabolic equations have been extensively studied in the continuous setting. In particular, quantitative unique continuation estimates and spectral inequalities play a central role through the Lebeau--Robbiano method. For the heat equation on $\mathbb R^d$, null-controllability from measurable sets is closely related to geometric thickness conditions \cite{EV,WWZZ}. More generally, spectral inequalities and propagation estimates for Schrödinger operators with potentials have been developed in a series of recent works, see for instance \cite{burq2022propagation,burq2025propagation,yunlei2025quantitative,zhu2024spectral,lebalch2025quantitative,malinnikova2025spectral,huang2024obs} and references therein.

In the semi-discrete setting, uniform controllability with respect to the mesh size was initiated by Boyer, Hubert, and Le~Rousseau \cite{boye2010disc,boye2010discr2}, who developed discrete Carleman estimates to prove approximate null-controllability for parabolic equations on bounded domains. Their results highlight the coupling between the spectral parameter and the discretization scale and establish partial spectral inequalities for low-frequency components. Earlier works also considered boundary control problems in one dimension and highlighted limitations of uniform controllability in higher dimensions \cite{lopez1998some}.

In contrast, much less is known in the full-space discrete setting. A recent work \cite{Wan2025obs} studies observability for discrete Schrödinger equations on combinatorial graphs and reveals phenomena that differ significantly from the continuous case, including threshold effects related to thickness conditions. However, controllability results for discrete heat equations on lattices with lower-order terms remain largely unexplored.

The present work extends the semi-discrete Carleman approach of \cite{boye2010disc,boye2010discr2} to discrete Schrödinger operators on the full lattice and establishes spectral inequalities and approximate null-controllability in the presence of bounded or polynomial growth potentials. In this way, our results connect the bounded-domain semi-discrete theory with recent advances on spectral inequalities for Schrödinger operators in the continuous full-space setting.

\subsection*{Results}\ 

\smallskip

\paragraph{\sl Spectral estimates.}
Our approach to approximate null-controllability follows the Lebeau--Robbiano strategy and is based on establishing suitable spectral inequalities for the discrete Schrödinger operator $P_h=-\Delta_h+V$. These inequalities provide quantitative control of low-frequency components from observations on equidistributed sets and constitute the main analytical ingredient of the paper.

We recall that in \cite{rojas2013}, a set $\omega\subset\mathbb R^d$ is said to be \emph{equidistributed} if there exist $0<\ell<L$ such that
each cube $kL+[-L/2,L/2]^d$, $k\in\mathbb Z^d$, contains a cube $z_k+[-\ell/2,\ell/2]^d$.
Let $\Pi_{\mu,h}$ denote the spectral projector associated with $P_h$, defined by
\[
\Pi_{\mu,h} u := \mathbf{1}_{P_h\le \mu} u = \int_{-\infty}^{\mu} \mathrm{d} m_\lambda\, u,
\]
where $m_\lambda$ denotes the spectral measure of $P_h$. Our first main result establishes the following spectral inequality.

\begin{theorem}\label{th:main:1}
    Let $\omega$ be equidistributed, $V\,:\R^d\to\R^+$ a non-negative function and let $P_h=-\Delta_h+V$ with associated spectral projector $\Pi_{\mu,h}$.
    
    \smallskip
    
    --- If $V\in \cc_b(\mathbb R^d,\R^+)$, there exist constants $C,\kappa>0$, $\varepsilon_0>0$, and $h_0>0$ depending only on $\omega$ such that for any
    $h\le h_0(1+\|V\|_{L^\infty}^{2/3})^{-1}$ and $0<\mu\le \varepsilon_0/h^2$,
    \begin{equation}
    \label{ineq-bounded-only}
        \|\Pi_{\mu,h} u\|_{\ell^2(h\mathbb Z^d)}^2
        \le C e^{\kappa \sqrt{1+\|V\|_{L^\infty}^{4/3}+\mu}} 
        \|\Pi_{\mu,h} u\|_{\ell^2(\omega)}^2,
        \quad \forall u\in \ell^2(h\mathbb Z^d).
    \end{equation}
    
    --- If $V\in \cc_b^1(\mathbb R^d,\R^+)$, there exist constants $C>0$, $\varepsilon_0>0$, and $h_0>0$ depending only on $\omega$ such that for any
    $h\le h_0(1+\|V\|_{W^{1,\infty}}^{1/2})^{-1}$ and $0<\mu\le \varepsilon_0/h^2$,
    \begin{equation}
    \label{inequ-c1-bounded}
        \|\Pi_{\mu,h} u\|_{\ell^2(h\mathbb Z^d)}^2
        \le C e^{\kappa\sqrt{1+\|V\|_{W^{1,\infty}}+\mu}} 
        \|\Pi_{\mu,h} u\|_{\ell^2(\omega)}^2,
        \quad \forall u\in \ell^2(h\mathbb Z^d).
    \end{equation}

    --- If $V(x)=\scal{x}^\beta$, there exist constants $C>0$, $\varepsilon_0>0$, and $h_0>0$ depending only on $\omega$ and $\beta$ such that for any $h<h_0$ and $0<\mu<\varepsilon_0/h^2$,
    \begin{equation*}
        \|\Pi_{\mu,h} u\|_{\ell^2(h\mathbb Z^d)}^2
        \le C e^{C\mu^{1/2}}
        \|\Pi_{\mu,h} u\|_{\ell^2(\omega)}^2,
        \quad \forall u\in \ell^2(h\mathbb Z^d).
    \end{equation*}
\end{theorem}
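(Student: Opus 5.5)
We follow the Lebeau--Robbiano/Jerison--Lebeau strategy, lifting the spectral problem to an elliptic problem in one more variable and then invoking a semi-discrete Carleman estimate of Boyer--Hubert--Le~Rousseau type, localized cube by cube. Given $u\in\ell^2(h\Z^d)$ and $\mu\le\varepsilon_0/h^2$, write $u_\mu=\Pi_{\mu,h}u=\int_{-\infty}^{\mu}\mathrm{d}m_\lambda u$. For $V\ge 0$ we have $P_h\ge0$, so we set
\[
F(s,x)=\int_0^{\mu}\frac{\sinh(s\sqrt\lambda)}{\sqrt\lambda}\,\mathrm{d}m_\lambda u(x),\qquad (s,x)\in(-2,2)\times h\Z^d .
\]
This $F$ solves the mixed continuous/discrete elliptic equation $-\partial_s^2F+P_hF=0$ with $F(0,\cdot)=0$ and $\partial_sF(0,\cdot)=u_\mu$. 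Spectral calculus gives $\|u_\mu\|^2\lesssim\|F\|^2_{L^2((-1,1)\times h\Z^d)}$ and $\|F\|^2_{L^2((-2,2)\times h\Z^d)}\lesssim e^{C\sqrt\mu}\|u_\mu\|^2$.

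The core step is a local quantitative unique continuation estimate for solutions of $(-\partial_s^2-\Delta_h+V)F=0$ on a box $(-2,2)\times(kL+[-L,L]^d)\cap h\Z^d$. It should take the interpolation form
\[
\|F\|_{L^2(Q_k)}\le C e^{C\tau_0}\bigl(\|F\|_{L^2(\tilde Q_k)}^{1-\alpha}\|\partial_sF(0)\|_{\ell^2(\omega_k)}^{\alpha}\bigr)+\dots,
\]
with constants uniform in $k$. We derive it from a semi-discrete Carleman estimate with weight $e^{\tau\varphi}$, where $\varphi$ is a boundary-adapted BHL weight centered at the small cube $z_k+[-\ell/2,\ell/2]^d$ at $s=0$. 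The estimate holds for $\tau\ge\tau_0$ and $\tau h\le\epsilon$, and we apply it to $\chi F$ with a cutoff $\chi$, using discrete Leibniz rules for the commutators.

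The potential is absorbed as a perturbation, in a way that depends on the regularity of $V$.
\begin{itemize}
\item For $V\in\cc_b$, we bound $\|e^{\tau\varphi}V\chi F\|\le\|V\|_\infty\|e^{\tau\varphi}\chi F\|$ and absorb it against the $\tau^{3}$ term. This forces $\tau\gtrsim\|V\|_\infty^{2/3}$. The constraint $\tau h\le\epsilon$ then produces $h\le h_0(1+\|V\|_\infty^{2/3})^{-1}$, and optimizing gives the cost $e^{\kappa\sqrt{1+\|V\|^{4/3}+\mu}}$.
\item For $V\in\cc_b^1$, we instead put $V$ into the conjugated operator and integrate by parts. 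The term involving $\nabla V$ is then absorbed by the $\tau$-weighted gradient terms, requiring only $\tau\gtrsim\|V\|_{W^{1,\infty}}^{1/2}$. This is the reason for the improved exponents in the second statement.
\end{itemize}

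Summing over $k\in\Z^d$ uses the bounded overlap of the $\tilde Q_k$ and Hölder's inequality for sums, since $\sum a_k^{1-\alpha}b_k^\alpha\le(\sum a_k)^{1-\alpha}(\sum b_k)^\alpha$. Combining with the two spectral bounds on $F$ gives
\[
\|u_\mu\|^2\le Ce^{C(\tau_0+\sqrt\mu)}\|u_\mu\|^{2(1-\alpha)}\|u_\mu\|_{\ell^2(\omega)}^{2\alpha},
\]
which is the claimed inequality. The restriction $\mu\le\varepsilon_0/h^2$ appears because $\tau\sim\sqrt\mu$ must satisfy $\tau h\le\epsilon$.

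For $V=\scal{x}^\beta$ the potential is unbounded, so we first localize in space. Since $\scal{P_hu_\mu,u_\mu}\le\mu\|u_\mu\|^2$, we get $\|V^{1/2}u_\mu\|^2\le\mu\|u_\mu\|^2$. Hence the mass of $u_\mu$ outside the ball $B_R$ with $R\sim(2\mu)^{1/\beta}$ is at most half of the total. The argument above is then run only on the cubes meeting $B_R$, where $\|V\|_{W^{1,\infty}}\lesssim R^{\beta}\sim\mu$. This would naively give cost $e^{C\sqrt\mu}$ and mesh condition $h\lesssim\mu^{-1/2}$, which is consistent with $\mu\le\varepsilon_0/h^2$.

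The \textbf{main obstacle} is the Carleman step: establishing a semi-discrete Carleman estimate on the full lattice, uniform in the cube index, with explicit tracking of how $\tau_0$ depends on $V$ under the restriction $\tau h\le\epsilon$. The discrete commutator remainders are of size $O(\tau^2h^2)$ relative to the principal terms and must still dominate the potential contributions. I would first adapt the proof of BHL's Theorem~1.4 verbatim, adding $V$ to the lower-order terms, and then check carefully which powers of $\tau$ remain available to absorb the $V$-terms.
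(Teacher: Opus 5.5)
Your argument takes a different route from the paper's. As written, it has a gap at exactly the point where discreteness matters.

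\textbf{The paper's route.} The paper never proves a local interpolation inequality. It applies the Carleman estimate of Theorem~\ref{crc:Carleman:1} to $\chi_k\widetilde v$ on each cell, with observation of $\partial_t u(0)$ on $\omega$ and a weight that is constant with $\partial_t\psi\le -c$ at $T_*=3$. It then sums over $k$. What remains is $s^3e^{2s\ffi(3)}\|\widetilde v(3)\|^2$ on the left, against $se^{2s\ffi(3)}\|D_h^\pm\widetilde v(3)\|^2$ plus the observation on the right. Since $\widetilde v(3)$ has spectrum in $[0,\mu]$ and $V\ge 0$, one has $\|D_h^\pm\widetilde v(3)\|^2\le\mu\|\widetilde v(3)\|^2$, so the final-time term is absorbed once $s^2\gtrsim\mu$. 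Taking $s\simeq\sqrt{1+\|V\|_\infty^{4/3}+\mu}$ gives $\|v\|^2\lesssim e^{Cs}\|v\|^2_{\ell^2(\omega)}$ directly, with no Hölder step. The constraint $sh\le\eps_0$ then yields both $\mu\le\eps_0/h^2$ and $h\lesssim(1+\|V\|_\infty^{2/3})^{-1}$.

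\textbf{The gap.} That is the mechanism you quote for the threshold (``$\tau\sim\sqrt\mu$ must satisfy $\tau h\le\epsilon$''), but it does not operate in your route. The ``$+\dots$'' in your local estimate hides the missing idea. In an interpolation argument, $\tau$ is obtained by minimizing $e^{-2\tau a}A_k+e^{2\tau b}B_k$. Here $A_k$ is the squared $H^1$ norm of $F$ on $(0,2)\times\tilde Q_k$ (discrete gradients enter through the commutator $D_h\chi\,D_hF$), and $B_k=\|\partial_sF(0)\|^2_{\ell^2(\omega_k)}$. The minimizer $\tau^*\sim\log(A_k/B_k)$ is unbounded, while the semi-discrete Carleman estimate holds only for $\tau\le\epsilon/h$. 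When $\tau^*>\epsilon/h$ you must stop at $\tau=\epsilon/h$, so the correct local statement is
\[
\|F\|^2_{L^2((0,1)\times Q_k)}\lesssim e^{C\tau_0}A_k^{1-\alpha}B_k^{\alpha}+e^{-c/h}A_k .
\]
The last term cannot be dropped. Take $F(s)=\frac{\sinh(s\sqrt{P_h})}{\sqrt{P_h}}\delta_y$ with $y\in Q_k\setminus\omega_k$: it solves the lifted equation with $F(0)=0$ and $B_k=0$, yet $F(s,y)\ge s$. Your final display omits this term.

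\textbf{The repair} stays within your framework. After summing over $k$, the extra term is $e^{-c/h}\sum_kA_k\lesssim(1+\mu)e^{C\sqrt\mu-c/h}\|u_\mu\|^2$. It can be absorbed into the left-hand side as soon as $\mu\le\eps_0/h^2$ with $\eps_0$ small. That, not $\tau\sim\sqrt\mu$, is where the threshold comes from in your approach; the requirement $\tau_0\le\epsilon/h$ still gives $h\lesssim(1+\|V\|_\infty^{2/3})^{-1}$. In the unbounded case, apply Proposition~\ref{prp-localization} to each slice $F(s,\cdot)\in\mathcal E_\mu(P_h)$, since your left-hand side is a norm of $F$, not of $u_\mu$.

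\textbf{Comparison once repaired.} The paper's argument is shorter. It needs only the final-time structure of the weight and the bound $\|D_h^\pm\widetilde v(3)\|^2\le\mu\|\widetilde v(3)\|^2$, with no optimization in $s$. Your route requires a genuine local boundary interpolation inequality. That means a weight, or a chain of Carleman estimates propagating smallness across the cell (each carrying its own $e^{-c/h}$ remainder), whose cutoff regions have lower weight than the target region.

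In exchange, your route handles the spatial cutoffs through the geometry of the weight. The paper instead absorbs $s\|e^{s\ffi}\widetilde v\,D_h^\pm\chi_k\|^2$ and $\|e^{s\ffi}D_h^\pm\chi_k\,D_h^\pm\widetilde v\|^2$ cell by cell into the terms carrying $\chi_k$. That step needs a pointwise bound of $|D_h^\pm\chi_k|$ by a multiple of $s\chi_k$, which a compactly supported cutoff does not satisfy near the edge of its support.
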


Here $\scal{x}=(1+|x|^2)^{1/2}$ is the usual Japanese bracket. Throughout this introduction, to avoid technicalities, we limit the statements for potentials with power growth
to this simplest case of polynomial-growth potentials. In the body of the paper we treat a broader class of polynomial-growth potentials, satisfying Assumption~\ref{assump} below.

The key feature of these estimates is that the exponential weight involves $\mu^{1/2}$, which is sublinear in the spectral parameter and therefore sufficient to derive observability inequalities through the Lebeau--Robbiano method. Another important aspect is the explicit dependence of the constants on the potential in the bounded cases, which allows us to obtain quantitative bounds on the control cost.

A fundamental difference with the continuous setting is that the spectral inequality holds only for spectral subspaces corresponding to eigenvalues below a threshold proportional to $h^{-2}$. This restriction is intrinsic to the discrete framework, as extending the inequality to arbitrarily high frequencies would imply strong unique continuation properties that fail for discrete heat equations. This limitation plays a central role in the analysis of controllability and is discussed further below.

The proof adapts the discrete Carleman estimates of \cite{boye2010disc,boye2010discr2} to incorporate the potential and to treat the full-space lattice setting.

\smallskip

\paragraph{\sl Approximate null-controllability.}
Building on the spectral inequalities established above, we derive approximate null-controllability properties
via the Lebeau–Robbiano method.
We consider the semi-discrete heat equation associated with the operator $P_h=-\Delta_h + V $,
\begin{equation}\label{eq:heat}
\begin{cases}
\partial_t u +P_hu  = \mathbf{1}_\omega f, & (t,x)\in (0,\infty)\times h\mathbb Z^d,\\
u(0,x)=u_0, & u_0\in \ell^2(h\mathbb Z^d),
\end{cases}
\end{equation}
and will prove our main result, which shows
approximate null-controllability with quantitative estimates uniform with respect to the mesh size:

\begin{theorem}\label{thm-uniform-control}
Let $V\in C_b(\mathbb R^d)$ be real non-negative valued and let $\omega$ be equidistributed.
Then for every $0<\eps\le 1$, there exist $K_\eps>0$ and $h_\eps>0$ such that the following holds.

\begin{enumerate}
\item If $T>K_\eps(1+\|V\|_{L^\infty}^{2/3})$, then there 
exists a constant $C>0$ depending only on $\omega$, $V$ and $\eps$, such that for all $0<h\le h_\eps(1+\|V\|_{L^\infty}^{2/3})^{-1}$ and all initial data $u_0\in \ell^2(h\mathbb Z^d)$, there exists a control $f$ satisfying
\[
\| f \|_{L^2((0,T);\ell^2(\omega\cap h\Z^d))} \le \frac{C}{\sqrt{T}} \|u_0\|_{\ell^2(h\mathbb Z^d)},
\]
such that the corresponding solution to \eqref{eq:heat} satisfies
\begin{equation}\label{uniform-ctrl-2}
\|u(T)\|_{\ell^2(h\mathbb Z^d)}
\le C_\varepsilon e^{-C T/h^2}\| u_0 \|_{\ell^2(h\mathbb Z^d)}.
\end{equation}
Moreover, the low-frequency components are driven to zero at time $T$, namely for $\mu=\epsilon_0/h^2$,
\begin{equation}\label{uniform-ctrl-1}
\Pi_{\mu,h}u(T)=0.
\end{equation}

\item If $T<K_\eps (1+\|V\|_{L^\infty}^{2/3})$, the same conclusion holds under the stronger condition
\[
0<h\le h_\eps \left(\frac{T}{1+\|V\|_{L^\infty}^{2/3}}\right)^{1+\epsilon}
\]
with the control cost estimate
\[
\| f \|_{L^2((0,T);\ell^2(\omega\cap h\Z^d))}
\le  C\exp\left(C\frac{(1+\|V\|_{L^\infty}^{2/3})^{2(1+\varepsilon)}}{T^{1+\eps}}\right)
\|u_0\|_{\ell^2(h\mathbb Z^d)}.
\]
\end{enumerate}

If $V\in C_b^1(\mathbb R^d)$, the above statements hold with $\|V\|_{L^\infty}^{2/3}$ replaced by $\|V\|_{W^{1,\infty}}^{1/2}$.
\end{theorem}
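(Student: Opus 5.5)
We combine the spectral inequality of Theorem~\ref{th:main:1} with the Lebeau--Robbiano iteration, run only up to the spectral threshold $\mu_{\max}=\varepsilon_0/h^2$. The high frequencies are not controlled; they are left to the natural dissipation. Write $A:=1+\|V\|_{L^\infty}^{2/3}$, or $A:=1+\|V\|_{W^{1,\infty}}^{1/2}$ in the $C^1_b$ case. Then $\kappa\sqrt{1+\|V\|^{4/3}_{L^\infty}+\mu}\le \kappa(A+\sqrt\mu)$ up to constants, so Theorem~\ref{th:main:1} gives
\[
\|\Pi_{\mu,h}u\|^2\le C e^{\kappa(A+\sqrt{\mu})}\|\Pi_{\mu,h}u\|^2_{\ell^2(\omega)},\qquad \mu\le \varepsilon_0/h^2,
\]
for $h\le h_0A^{-1}$.

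\textbf{Step 1 (low-frequency control on one interval).} By HUM and the spectral inequality, for every $\tau>0$ and $\mu\le\mu_{\max}$ there is a control on $(0,\tau)$ supported in $\omega$ that brings $\Pi_{\mu,h}u$ to $0$ at time $\tau$. Its cost is at most $C\tau^{-1/2}e^{\kappa(A+\sqrt\mu)/2}$ times the initial norm. This is the standard finite-dimensional-type argument applied to the invariant subspace $\mathrm{Ran}\,\Pi_{\mu,h}$: the adjoint observability $\|e^{-\tau P_h}\Pi_{\mu,h}v\|^2\le C\tau^{-1}e^{\kappa(A+\sqrt\mu)}\int_0^\tau\|\mathbf 1_\omega e^{-tP_h}\Pi_{\mu,h}v\|^2dt$ follows by integrating the spectral inequality in time, using $P_h\ge0$ since $V\ge0$. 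Because the projectors commute with the semigroup and $P_h\ge0$ gives contraction, the free evolution of the high part satisfies $\|(1-\Pi_{\mu,h})e^{-tP_h}\|\le e^{-\mu t}$.

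\textbf{Step 2 (iteration).} Choose $\mu_k=2^{2k}$ and time lengths $T_k=\tau_0 2^{-k\rho}$ with $0<\rho<1$, where $\rho$ is tied to $\varepsilon$ (for instance $1/\rho=1+\varepsilon$). On each interval we first control $\Pi_{\mu_k}$ on a half of length $T_k/2$, then let the solution evolve freely on the other half. The norm then satisfies
\[
\|u(a_{k+1})\|\le (1+C T_k^{-1/2}e^{\kappa(A+2^k)/2}\sqrt{T_k})\,e^{-\mu_kT_k/2}\|u(a_k)\|.
\]
Since $\mu_kT_k=\tau_02^{k(2-\rho)}$ dominates $2^k$, the product telescopes to a bounded quantity. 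The costs, weighted by the previous decay, sum geometrically. The prefactor $e^{\kappa A}$ is absorbed once $\tau_0\gtrsim A$, which gives case (1) with threshold $K_\varepsilon A$ and cost $C/\sqrt T$. When $T<K_\varepsilon A$, take $\tau_0=T$ and start at a frequency $\mu_{k_0}$ with $2^{k_0}\sim (A/T)^{1+\varepsilon}$ roughly. Optimizing $e^{\kappa\sqrt{\mu}}e^{-\mu T}$ then yields the cost $\exp(CA^{2(1+\varepsilon)}/T^{1+\varepsilon})$.

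\textbf{Step 3 (stopping at the threshold).} The iteration must stop at the last $k=N$ with $\mu_N\le\varepsilon_0/h^2$. On the final interval we control exactly $\Pi_{\mu,h}$ with $\mu=\varepsilon_0/h^2$ and then apply no further free evolution, which gives \eqref{uniform-ctrl-1}. The remaining high part is $(1-\Pi_{\mu,h})u(T)$. It comes from the free decay of the high part, at rate at least $\mu$ over a time of order $T$, together with controls injecting high frequencies. Bounding it by $e^{-c\mu T}\|u_0\|=e^{-cT/h^2}\|u_0\|$ gives \eqref{uniform-ctrl-2}. The requirement that $N\ge k_0$, meaning the threshold lies above the starting frequency, is exactly the condition $h\le h_\varepsilon (T/A)^{1+\varepsilon}$ in case (2), and $h\le h_\varepsilon A^{-1}$ in case (1).

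\textbf{Main obstacle.} The delicate step is Step 3. The controls applied on the intermediate intervals introduce high-frequency components that are never controlled. To obtain decay like $e^{-cT/h^2}$ rather than merely $e^{-c\mu_kT_k}$, I would reorganize the scheme so that the last step, controlling $\Pi_{\varepsilon_0/h^2}$, occupies a time interval of length comparable to $T$. For example, the dyadic steps are confined to $[0,T/2]$ and the top-frequency control to $[T/2,T]$, preceded by a free-decay portion. The error then comes only from $(1-\Pi)$ of the state, which has decayed at rate $\varepsilon_0/h^2$ for time $\sim T$. Keeping track of the dependence on $A$ through these constants, uniformly in $h$, is the bookkeeping I expect to be most error-prone.
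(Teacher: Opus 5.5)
Your Steps~1 and~2 are in line with the paper (Lemmas~\ref{adjoint-obs-system-lma}--\ref{control-lma} and the dyadic iteration). There is, however, a genuine gap exactly where \eqref{uniform-ctrl-2} is produced. In Step~3 you apply \emph{no} free evolution after the last control, in order to secure \eqref{uniform-ctrl-1}. In the reorganized scheme you again let the top-frequency control run up to time $T$, with the free decay placed \emph{before} it. Ending with a control is what destroys the bound $e^{-cT/h^2}$. The control enters as $\mathbf 1_\omega f$, and multiplication by $\mathbf 1_\omega$ does not preserve spectral localization. So the final control injects $\int_{T-\tau}^{T}e^{-(T-s)P_h}(I-\Pi_{\mu,h})\mathbf 1_\omega f(s)\,ds$, and nothing damps this term afterwards. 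The only general bound is $\int_{T-\tau}^T e^{-\mu(T-s)}\|f(s)\|\,ds\le(2\mu)^{-1/2}\|f\|_{L^2}\simeq h\|f\|_{L^2}$. Here $\|f\|_{L^2}$ is proportional to $\|\Pi_{\mu,h}u(T-\tau)\|$. If the top control occupies $[T/2,T]$, that quantity has only been damped by the short dyadic free-decay steps, i.e.\ by $e^{-c\mu_NT_N}$, which is the rate you wanted to improve. Your sentence ``the error then comes only from $(1-\Pi)$ of the state'' overlooks precisely this term.

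The missing observation is that free evolution \emph{after} the last control does not spoil \eqref{uniform-ctrl-1}. Since $\Pi_{\mu,h}$ commutes with $e^{-tP_h}$, if $\Pi_{\mu,h}u(\tau)=0$ and $f=0$ on $(\tau,T)$, then $\Pi_{\mu,h}u(T)=e^{-(T-\tau)P_h}\Pi_{\mu,h}u(\tau)=0$. The paper therefore runs the whole iteration inside $[0,T/2]$, up to and including the top level $j=J_h$ with $2^{2J_h}\simeq\varepsilon_0h^{-2}$. This fits because the lengths $T_j=L2^{-j\rho}$ are summable. It then sets $f\equiv0$ on $[a_{J_h+1},T]\supset[T/2,T]$. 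After the last control, the entire state lies in the invariant subspace $\mathrm{Ran}(I-\Pi_{2^{2J_h},h})$, including every high-frequency piece injected by all the controls. The semigroup decays there like $e^{-2^{2J_h}t}$, so $\|u(T)\|\le e^{-cT/h^2}\|u(a_{J_h+1})\|$, as in \eqref{eq:estuT}. The iteration bound gives $\|u(a_{J_h+1})\|\lesssim\|u_0\|$; in case~2 this uses $g(2^{J_h})\le0$ once $j_*<J_h$. Your reordering could be salvaged only by making the free-decay gap before the top control of length $\sim T$. The top control's cost would then be $\lesssim e^{\kappa(A+\sqrt{\mu})/2}e^{-cT/h^2}\|u_0\|$, and you would still have to absorb $e^{O(1/h)}$ into $e^{-cT/h^2}$, using $h\ll T$. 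Ending with free evolution avoids all of this.
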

\begin{remark}
    By tracing the dependence of the constants on $\varepsilon$ in the proof, we find that, in the second part of the theorem, they satisfy the asymptotic relations
    \begin{equation*}
        K_\varepsilon\asymp  \varepsilon^{-1}, h_\varepsilon\asymp \varepsilon^{1+\varepsilon}\asymp \varepsilon.
    \end{equation*}
    Moreover, the control cost is bounded by
    \[
\| f \|_{L^2((0,T);\ell^2(\omega\cap h\Z^d))}
\le  \frac{C'}{ \varepsilon \sqrt{T}}\exp\left(C'\frac{(1+\|V\|_{L^\infty}^{2/3})^{2(1+\varepsilon)}}{T^{1+\eps}}\right)
\|u_0\|_{\ell^2(h\mathbb Z^d)}
\]
for some positive constant $C'$ independent of $\varepsilon$.
\end{remark}

When the potential has polynomial growth, a similar approximate null-controllability result holds, although with less precise bounds on the control cost.

\begin{theorem}\label{thm-unbounded-uniform-control}
Let $V(x)=\scal{x}^\beta$ and let $\omega\subset\mathbb R^d$ be equidistributed. There exist constants $C_0,C_1,C_2,C_3$ and $h_0$, depending only on $\omega$, $T$, and $\beta$, such that for all $0<h\le h_0$ and all initial data $u_0\in \ell^2(h\mathbb Z^d)$, there exists a control $f$ satisfying
\[
\| f \|_{L^2((0,T);\ell^2(h\mathbb Z^d))} \le C_0 \|u_0\|_{\ell^2(h\mathbb Z^d)},
\]
such that the solution of \eqref{eq:heat} satisfies
\[
\|u(T)\|_{\ell^2(h\mathbb Z^d)}
\le C_1 e^{-C_2/h^2}\| u_0 \|_{\ell^2(h\mathbb Z^d)}.
\]
Moreover, the low-frequency components are driven to zero, namely for $\mu\le C_3 h^{-2}$,
\[
\Pi_{\mu,h}u(T)=0.
\]
\end{theorem}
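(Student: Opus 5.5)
We follow the Lebeau--Robbiano strategy combined with the Hilbert Uniqueness Method, using the third part of Theorem~\ref{th:main:1} as the key input. Since $P_h$ is self-adjoint and non-negative on $\ell^2(h\Z^d)$ (as $V\ge0$ and $-\Delta_h\ge 0$), the semigroup $e^{-tP_h}$ is a contraction that commutes with $\Pi_{\mu,h}$. On high frequencies it dissipates: $\|(1-\Pi_{\mu,h})e^{-tP_h}v\|\le e^{-\mu t}\|v\|$. By HUM duality, proving the theorem reduces to a \emph{partial} observability inequality for the adjoint system: for every $\varphi_T$ with $\Pi_{\mu_*,h}$-component controlled, we need
\[
\|\Pi_{\mu_*,h}e^{-TP_h}\varphi_T\|^2\le C_0^2\int_0^T\|e^{-tP_h}\varphi_T\|_{\ell^2(\omega)}^2\,dt+\text{error},
\]
with $\mu_*=C_3h^{-2}$. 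Equivalently, we construct the control directly and add up the errors.

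\emph{Step 1: dyadic time splitting.} Write $T=\sum_{k\ge1}2T_k$ with $T_k=c\,2^{-k\rho}T$ for some $\rho\in(0,1/2)$, and frequencies $\mu_k=2^{2k}$. We stop at the last index $k_*$ with $\mu_{k_*}\le \varepsilon_0 h^{-2}$, so $k_*\sim\log_2(1/h)$, and take $h_0$ small enough that Theorem~\ref{th:main:1} applies to every $\mu_k$, $k\le k_*$. On each active interval of length $T_k$, the spectral inequality $\|\Pi_{\mu_k,h}v\|^2\le Ce^{C\mu_k^{1/2}}\|\Pi_{\mu_k,h}v\|_{\ell^2(\omega)}^2$ gives, by the standard finite-dimensional-type HUM argument on the invariant subspace $\mathrm{Ran}\,\Pi_{\mu_k,h}$, a control $f_k$. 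This control steers $\Pi_{\mu_k,h}u$ to $0$ at the end of the interval with cost $\|f_k\|\le C T_k^{-1/2}e^{C\mu_k^{1/2}}\|u(\text{start})\|$. Note that only observability of $\Pi v$ is needed, not of $v$; this requires an extra step, since the spectral inequality involves $\|\Pi v\|_{\omega}$ while the observation is of the full $v$. To handle this we work with controls of the form $f=\mathbf 1_\omega\Pi_{\mu_k,h}g$ and project the equation, which gives exactly this structure. Controls may spill into high frequencies. On the following passive interval of length $T_k$ we set $f=0$, and the high-frequency part decays by $e^{-\mu_kT_k}$.

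\emph{Step 2: summation.} Let $a_k=\|u(\text{end of step }k)\|$. Then $a_k\le (1+Ce^{C\mu_k^{1/2}})e^{-\mu_kT_k}a_{k-1}$, and $\mu_k^{1/2}=2^k\ll \mu_kT_k=c\,2^{k(2-\rho)}T$. Hence the product of the factors converges uniformly in $k$ and in $h$, which gives $a_k\le C a_0$ and $\sum_k\|f_k\|\le C_0\|u_0\|$ with constants depending on $T,\omega,\beta$. At the last step $k_*$, we obtain $\Pi_{\mu_{k_*},h}u=0$ at the end of the active interval. We then use the remaining free evolution of length $\gtrsim T_{k_*}$ (we reserve a fixed fraction $T/2$ of $T$ at the end as a final passive interval instead) to get $\|u(T)\|\le e^{-\mu_{k_*}T/2}Ca_0=C_1e^{-C_2/h^2}\|u_0\|$. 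Since $\Pi_{\mu_{k_*},h}$ commutes with the free semigroup, $\Pi_{\mu,h}u(T)=0$ for $\mu\le\mu_{k_*}$, which gives $C_3\simeq\varepsilon_0/4$.

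The \textbf{main obstacle} is the truncation at $\mu\le\varepsilon_0h^{-2}$. Unlike the continuous case, the iteration cannot be continued indefinitely, which is exactly why only approximate controllability is obtained. One must also check that the number of steps $k_*\sim\log(1/h)$ does not cause the cumulative constants to blow up as $h\to0$. The superexponential decay $e^{-c2^{k(2-\rho)}T}$ against $e^{C2^k}$ settles this, but only once $T$ is fixed, which explains why the constants depend on $T$.
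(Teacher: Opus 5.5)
Your proposal is correct and takes the same route as the paper. The paper proves this theorem by the Lebeau--Robbiano iteration: it defers to Boyer--Hubert--Le~Rousseau, \S 7, and carries the argument out explicitly only for bounded $V$. The ingredients match yours: dyadic frequencies $2^{2j}$ up to $J_h$ with $2^{2J_h}\simeq\varepsilon_0h^{-2}$, active/passive time steps $T_j\propto 2^{-j\rho}$, HUM on the invariant subspace $E_j$ with cost $T_j^{-1}e^{C2^j}$, dissipation $e^{-2^{2j}T_j}$ on the passive intervals, and a final free interval producing $e^{-C/h^2}$. One precision: the bound $\sum_k\|f_k\|\le C_0\|u_0\|$ needs the superexponential decay of $a_{k-1}$ that your product of factors gives, not merely $a_k\le Ca_0$; your recurrence already supplies this.
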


These results show that approximate null-controllability holds uniformly with respect to the discretization parameter, with an exponentially small remainder corresponding to high-frequency components. The estimates also provide explicit bounds on the control cost and highlight the role of the potential and the geometry of the observation set.

\smallskip

\paragraph{\sl Observability inequalities.}
Through the Hilbert Uniqueness Method (see, e.g., \cite{Li1,Li2,tucsnak2009observation}), 
these approximate null-controllability properties 
are equivalent to observability inequalities for the corresponding parabolic equation. More precisely, 
we obtain the following relaxed (uniform) observability result.

\begin{theorem}\label{th:obsineqintro}
Let $V\in \cc_b(\mathbb R^d)$ be real non-negative valued and let $\omega$ be equidistributed.
Then for every $0<\eps\le 1$, there exists $K_\eps>0$ such that the semi-discrete solution $v$ of the adjoint system
\begin{equation*}
\begin{cases}
\partial_t v+\Delta_h v -Vv=0,& \text{in } (0,T)\times h\mathbb Z^d,\\
v(T)=v_F\in \ell^2(h\mathbb Z^d),
\end{cases}
\end{equation*}
satisfies the uniform observability estimate
\begin{equation}\label{eq:necessaryintro}
\|v(0)\|_{\ell^2(h\mathbb Z^d)}^2
\le K_T \int_0^T \|v(t)\|_{\ell^2(\omega)}^2\,\mathrm{d}t
+ C e^{-C/h^2}\|v_F\|_{\ell^2(h\mathbb Z^d)}^2,
\end{equation}
where
\[
K_T=
\begin{cases}
\dfrac{C_\eps}{\sqrt{T}} & \text{if } T>K_\eps(1+\|V\|_{L^\infty}^{2/3}),
\text{ and }h\le \dfrac{h_0}{1+\|V\|_{L^\infty}^{2/3}};\\[6pt]
\dfrac{C_\eps}{\sqrt{T}}\exp\left(C_\eps\dfrac{(1+\|V\|_{L^\infty}^{2/3})^2}{T}\right)^{1+\varepsilon}
& \text{if } T\le K_\eps(1+\|V\|_{L^\infty}^{2/3}),
\text{ and } h\leq h_\eps\left(\dfrac{T}{1+\|V\|_{L^\infty}^{2/3}}\right)^{1+\varepsilon}.
\end{cases}
\]

If $V\in \cc_b^1(\mathbb R^d)$, the same statement holds with $\|V\|_{L^\infty}^{2/3}$ replaced by $\|V\|_{W^{1,\infty}}^{1/2}$.
\end{theorem}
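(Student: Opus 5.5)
The plan is to derive the relaxed observability estimate directly from the spectral inequality of Theorem~\ref{th:main:1} (bounded and $\cc^1_b$ cases) by the Lebeau--Robbiano iteration, run on the adjoint (backward) equation. Reversing time, set $w(t)=v(T-t)$, which solves $\partial_t w+P_hw=0$ with $w(0)=v_F$. Since $P_h$ is self-adjoint and non-negative ($V\ge0$), we have $w(t)=e^{-tP_h}v_F$, and the following two facts hold:
\begin{itemize}
\item the spectral inequality $\|\Pi_{\mu,h}g\|^2\le Ce^{\kappa\sqrt{A+\mu}}\|\Pi_{\mu,h}g\|_{\ell^2(\omega)}^2$ holds for $\mu\le\eps_0/h^2$, where $A=1+\|V\|_{L^\infty}^{4/3}$ (or $1+\|V\|_{W^{1,\infty}}$);
\item the dissipation estimate $\|(1-\Pi_{\mu,h})e^{-tP_h}g\|\le e^{-\mu t}\|g\|$ holds.
\end{itemize}
Note that $\sqrt{A}\simeq 1+\|V\|^{2/3}$, which is the quantity appearing in the statement.

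\textbf{Step 1 (one interval).} On an interval $[a,a+2\tau]$ with threshold $\mu\le\eps_0/h^2$, combine the spectral inequality applied to $\Pi_\mu w(t)$ with the decomposition $\|\Pi_\mu w\|_\omega\le\|w\|_\omega+\|(1-\Pi_\mu)w\|$. Then use decay of the high modes on the second half of the interval, together with the contraction of the semigroup, to obtain
\[
\|w(a+2\tau)\|^2\le \frac{Ce^{\kappa\sqrt{A+\mu}}}{\tau}\int_a^{a+2\tau}\|w\|_{\ell^2(\omega)}^2\,dt+Ce^{\kappa\sqrt{A+\mu}}e^{-2\mu\tau}\|w(a)\|^2 .
\]
This is the standard interpolation step: the low modes are observed, and the high modes have decayed by the time $a+2\tau$.

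\textbf{Step 2 (iteration).} Partition $[0,T]$ dyadically, using $\tau_k\asymp T2^{-k}$ and $\mu_k\asymp(2^k/T)^{\gamma}$, with $\gamma$ slightly larger than $2$ so that $\mu_k\tau_k\gg\sqrt{\mu_k}$. The exponent $\gamma=2(1+\eps)$ produces the powers $1+\eps$ in the statement. Then telescope the bounds of Step 1 in the usual Lebeau--Robbiano way (multiply each step by a weight so that the $\|w(a_k)\|^2$ terms cancel).

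The difference from the continuous case is that the iteration must stop at the last index $N$ for which $\mu_N\le\eps_0/h^2$. At that point the leftover term is bounded by $e^{\kappa\sqrt{A+\mu_N}}e^{-c\mu_N\tau_N}\|v_F\|^2$. With $\mu_N\sim\eps_0h^{-2}$ and $\tau_N$ comparable to a fixed fraction of $T$ in the relevant regime, this gives the remainder $Ce^{-C/h^2}\|v_F\|^2$. The smallness condition on $h$ is precisely what guarantees both that the final threshold exceeds the scale needed for the sum to converge, and that Theorem~\ref{th:main:1} applies.

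\textbf{Step 3 (two regimes).} When $T>K_\eps\sqrt A$, a single step with $\mu\simeq\eps_0h^{-2}$ suffices. Choosing $\tau=T/2$ and $\mu$ large relative to $A$, the factor $e^{\kappa\sqrt{A+\mu}}$ is absorbed by $e^{-\mu T}$, since $\mu T\ge K_\eps\sqrt{A}\,\mu\gg\sqrt\mu$. To keep the cost at $C_\eps/\sqrt T$, start the iteration at $\mu_0\simeq A$, so the first factor $e^{\kappa\sqrt{2A}}$ becomes a constant once $K_\eps$ is large; I would first check whether this absorption actually holds. When $T\le K_\eps\sqrt A$, the dyadic sum gives a cost of the form $\exp\bigl(C(A/T^{2})^{\cdots}\bigr)$; optimizing the choice $\mu_0\simeq(\sqrt A/T)^{2(1+\eps)}$ yields $\exp\bigl(C(\sqrt A)^{2(1+\eps)}/T^{1+\eps}\bigr)$. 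The requirement $\mu_0\le\eps_0h^{-2}$ then becomes $h\lesssim(T/\sqrt A)^{1+\eps}$.

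\textbf{Main obstacle.} The delicate point is the bookkeeping at the truncated end of the iteration. The remainder must be exponentially small in $h^{-2}$, uniformly, while the cost constant does not deteriorate, and both must be balanced against the explicit $A$-dependence. I would handle this by taking the final threshold $\mu_N$ to be exactly $\eps_0/h^2$ and placing its interval of length $\gtrsim T$ first, that is, at the beginning of the forward time. This lets high-frequency decay $e^{-\mu_N T/2}$ dominate $e^{\kappa\sqrt{A+\mu_N}}$ whenever $h$ is small relative to $T$ and $A$.
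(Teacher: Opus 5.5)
Your argument is correct in outline, but it takes a different route from the paper's. The paper never iterates on the observability side. It runs the Lebeau--Robbiano construction on the control side (Theorem~\ref{thm-uniform-control}: thresholds $2^{2j}$ for $j\le J_h$, with $2^{2J_h}\simeq h^{-2}$, controls on $[0,T/2]$ and free decay afterwards). It then dualizes by HUM to get an exact observability inequality for final data in $\Pi_{Ch^{-2},h}\ell^2(h\Z^d)$ (Corollary~\ref{inverse-obs-crc}). Finally it obtains the relaxed estimate by splitting $v=\Pi_{\mu,h}v+(1-\Pi_{\mu,h})v$ with $\mu\simeq h^{-2}$. The high part is $O(e^{-cT/h^2}\|v_F\|)$ by dissipation, both at $t=0$ and inside $\int_0^{T/2}\|\cdot\|^2_{\ell^2(\omega)}$. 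The low part is handled by the corollary on $[0,T/2]$.

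Your direct telescoping, in the spirit of Miller or Phung--Wang, avoids the two dualizations and the intermediate control theorem, and it produces the $e^{-cT/h^2}$ remainder in the same stroke. The paper's route gives the control statement (including $\Pi_{\mu,h}u(T)=0$) along the way, and it makes the cut-off at $\eps_0h^{-2}$ trivial. You can borrow that device:
\begin{itemize}
\item run your iteration only on $\Pi_{\eps_0h^{-2},h}v_F$;
\item cap at $\eps_0h^{-2}$ the first threshold that would exceed it (that step then has no remainder, and its cost is at most that of the uncapped step);
\item treat the high part by dissipation.
\end{itemize}
This removes your ``main obstacle'' entirely.

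Three points in your write-up should be corrected.

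\textbf{(i) The single step for large $T$.} A single step with $\mu\simeq\eps_0h^{-2}$ does not suffice. Its observation cost is at least $\tau^{-1}e^{\kappa\sqrt{\eps_0}/h}$, which is not uniform in $h$, so the iteration starting at $\mu_0\simeq A$ is needed in both regimes.

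\textbf{(ii) The first-step factor.} The factor $e^{\kappa\sqrt{2A}}$ from the first step cannot be absorbed by enlarging $K_\eps$. It multiplies the observation term, it is independent of $T$, and it comes from the spectral inequality itself (it survives as $\mu\to0$). So $C_\eps$ necessarily depends on $\|V\|_{L^\infty}$. The statement allows this, and the paper carries the same factor: at $j=0$, Lemma~\ref{control-lma} costs $T_0^{-1}e^{\kappa(1+\|V\|_{L^\infty}^{2/3})}$, and \eqref{eq:estcontcostsi2} silently puts this into a $V$-dependent constant.

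\textbf{(iii) The truncation.} You only check that $e^{-\mu_NT/2}$ beats $e^{\kappa\sqrt{A+\mu_N}}$ in the remainder. The same factor, of size $e^{\kappa\sqrt{\eps_0}/h}$, also multiplies the observation integral of your long last step. It must be compensated by the accumulated product $\prod_{j<N}\delta_j\le\delta_{N-1}$. It is compensated: with $\mu_k=\mu_02^{\gamma k}$ and $\tau_k\simeq T2^{-k}$,
\[
\mu_{N-1}\tau_{N-1}\simeq T\mu_0^{1/\gamma}h^{-2+2/\gamma}.
\]
Since $T\mu_0^{1/\gamma}\gtrsim1$ in both regimes, this is $\gg h^{-1}$ precisely because $\gamma>2$. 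You should state this explicitly.
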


A similar observability result also holds when the potential satisfies a power growth condition. Inequality \eqref{eq:necessaryintro} can be interpreted as a quantitative unique continuation estimate for the discrete (reverse) heat equation. In contrast with the continuous setting, the presence of the remainder term $C e^{-C/h^2}\|v_F\|^2$ is unavoidable and reflects the failure of strong unique continuation for discrete parabolic equations.

The estimate also highlights a geometric constraint on the observation set. When the potential is bounded, observability can only hold if $\omega$ is thick, a condition closely related to equidistribution and consistent with known results in the continuous setting \cite{EV,WWZZ}. The proof relies in particular on heat kernel estimates on the lattice, which are of independent interest. For potentials with polynomial growth, the optimal geometric condition on $\omega$ remains partially open, and weaker assumptions are required.

\smallskip

\paragraph{\bf Organization of the paper.}
The paper is organized as follows. In Section~\ref{sec:properties} we introduce the notation and recall basic properties of discrete Schrödinger operators on the lattice $h\mathbb Z^d$, including the definition of spectral projectors and the classes of potentials under consideration.

Section~\ref{sec:proof-of-main-thms} is devoted to the proof of the spectral inequalities. We first establish a Carleman estimate adapted to the operator $P_h=-\Delta_h+V$ in the full-space discrete setting. This estimate extends the semi-discrete framework of Boyer--Hubert--Le~Rousseau to incorporate the presence of a potential. We then use this Carleman inequality to derive the spectral estimates for bounded and polynomial growth potentials.

In Section~\ref{sec:control} we apply these spectral inequalities through the Lebeau--Robbiano method to obtain
approximate null-controllability results and deduce
uniform observability estimates via the Hilbert Uniqueness Method.

Finally, Section~\ref{sec:necessary} discusses necessary geometric conditions on the observation set in the case of bounded potentials and establishes a partial converse showing that thickness is essentially required for observability.

\section{Setting and preliminaries on discrete Schrödinger operators}\label{sec:properties}

In this section, we give some basic notions and results about discrete Schrödinger operators for both bounded potentials and unbounded ones.

\subsection{Discrete calculus}

Let us first introduce the discrete operators that will be used throughout the paper. We take $u$ and $v$ to be functions on $h\Z^d$.
We will say that $u\in\ell^2(h\Z^d)$ if
$$
\|u\|_{\ell^2(h\Z^d)}^2:=\sum_{x\in h\Z^d}|u(x)|^2
$$
is finite.

Next, we define the discrete forward and backward difference operators as 
\begin{equation*}
    D_{j,h}^{\pm} u(x):=\pm \frac{\left(u(x\pm he_j)-u(x)\right)}{h},\quad  j=1,2,\cdots,d.
\end{equation*}
as well as
\begin{equation*}
    D_h^{\pm} u:=\bigl(D^{\pm}_{j,h} u\bigr)_{j=1,\dots,d}:=(D_{1,h}^\pm u, D_{2,h}^\pm u, \cdots,D_{d,h}^\pm u)
\end{equation*}
We also use $D_{h}:=(D_h^+,D_h^-)$ for simplicity.
We write
    $$
    \Delta_ju(x)=D_{j,h}^-D_{j,h}^+ u(x)=\frac{u(x+he_j)+u(x-he_j)-2u(x)}{h^2}.
    $$
and
$$
\Delta_hu(x)=\sum_{j=1}^d\Delta_ju(x).
$$
We will also need the unscaled Laplacian $h^2\Delta_h$ which is, up to sign, the graph Laplacian on $h\Z^d$:
    \begin{equation*}
        -h^2\Delta_h u(x)=\sum_{y\sim x} (u(x)-u(y)),
    \end{equation*}
    and the continuous Laplacian on $\R^d$
    \begin{equation*}
        \Delta u(x)=\sum_{j=1}^d \partial_j^2 u(x)= \sum_{j=1}^d u_{jj}(x).
    \end{equation*}

For each $j$, we define several operators:
    $$
    M_j^\pm u(x)=\frac{u(x\pm he_j)+u(x)}{2}, 
    $$
    and
    $$
    \dc_ju(x)=\frac{1}{2}(D_{j,h}^++D_{j,h}^-)u(x)=M_j^\pm D_{j,h}^\mp u(x)=\frac{u(x+he_j)-u(x-he_j)}{2h}.
    $$

We will need some discrete formulas that we recall for the convenience of the reader.
The product formula for the discrete derivative writes
    \begin{equation}\label{discrete:D:eq2}
        D_{j,h}^\pm(uv) = D_{j,h}^\pm u M_j^\pm v+M_j^\pm u D_{j,h}^\pm v.
    \end{equation}
    The discrete Laplacian of a product is
    \begin{equation}\label{discretelaplacefg:eq1} 
    \Delta_h (uv)=v\Delta_h u+u\Delta_h v+2\sum_{j=1}^dM_j^-\bigl(D_{j,h}^+uD_{j,h}^+v\bigr).
    \end{equation}

When $u$ and $v$ are supported in a cube $Q$ and $u=v=0$ on $\partial Q$,
the summation by parts formulas reduce to
    \begin{equation}\label{Summation:eq2}
        \sum_{x\in Q} v D_{j,h}^+ u = -\sum_{x\in Q} u D_{j,h}^- v .
    \end{equation}

\subsection{Potentials and control sets}

Throughout this paper, $V:\R^d\to\R^+$ is a real, non-negative valued function that will be restricted
to $h\Z^d$. As we let $h\to 0$, to avoid any useless technicality, we always assume that $V$ is continuous.
We will consider 3 classes of potentials:

-- bounded non-negative continuous potentials, $V\in \cc_b(\R^d,\R^+)$;

-- bounded non-negative potentials with bounded derivative, $V\in \cc_b^1(\R^d,\R^+)$. In this case, we
write $\|V\|_{W^{1,\infty}(\R^d)}=\|V\|_\infty+\|\nabla V\|_\infty$;

-- potentials with a power growth behavior. More precisely, we will consider potentials satisfying the following assumption:

\begin{assumption}{A}\label{assump}
    Let $V\in \cc(\R^d)$ be a nonnegative real valued potential that can be written as $V=V_1+V_2$ for some $V_1\in \cc^1(\R^d)$ and $V_2\in \cc(\R^d)$. Further, assume that there exist constants $c_1>0,c_2>0$ and $\beta_2\ge \beta_1>0$ such that 
    \begin{equation*}
        V(x)\ge c_1\langle x\rangle^{\beta_1}
    \end{equation*}
    and
    \begin{equation*}
        |DV_1(x)|+|V_1(x)|+|V_2(x)|^{\frac{4}{3}}\le c_2 |x|^{\beta_2}.
    \end{equation*}
\end{assumption}

Note that when $V(x)=|x|^\beta$ with $\beta>1$, we simply take $V_1=V$, $V_2=0$ and Assumption~\ref{assump}
is satisfied with $\beta_1=\beta_2=\beta$. When $0<\beta\leq 1$, we take $\chi$ a smooth cutoff function
syuch that $\chi(x)=1$ for $|x|\leq 1$ and $\chi(x)=0$ for $|x|\geq 2$. Then setting
$V_1=(1-\chi)V$, $V_2=\chi V$, we see that Assumption~\ref{assump}
is satisfied with $\beta_1=\beta$ and $\beta_2=\dfrac{4}{3}\beta$. On the other hand,
$V(x)=\scal{x}^\beta$, taking $V_1=V$ and $V_2=0$, we get that $V$ satisfies Assumption~\ref{assump}
with $\beta_1=\beta_2=\beta$.

We then consider the discrete Schr\"odinger operator
$$
P_hu=-\Delta_hu+Vu
$$
where $V$ is understood as the restriction of $V$ from $\R^d$ to $h\Z^d$ and the product $Vu$ is the pointwise
product $Vu(hk)=V(hk)u(hk)$ for $h>0$ and $k\in\Z^d$.

\smallskip

Our main results deal with spectral inequality, observability and controllability from
subsets of $h\Z^d$. However, our focus is on obtaining results that hold when $h\to 0$.
Therefore all sets will be taken in $\R^d$ and intersected with $h\Z^d$. In order to avoid heavy notation
we will use the same letter for both, the subset of $\R^d$ in its intersection with $h\Z^d$.

We can now describe the conditions on the set $\omega$ on which controllability properties hold.
First, $\omega\subset\R^d$. When $u$ is a function on $h\Z^d$, $\|u\|_{\ell^2(\omega)}$
means
$$
\|u\|_{\ell^2(\omega)}^2=\|u\|_{\ell^2(\omega\cap h\Z^d)}^2
=\sum_{\stackrel{k\in \Z^d}{hk\in\omega}}|u(hk)|^2.
$$
Further, we will always assume that $\omega$
is open with smooth boundary. This condition ensures that the geometry of
$\omega$ is well reflected in the sampling process when $h\to 0$. In particular,  for every ball $B$,
$$
h^d|\omega\cap h\Z^d\cap B|\to|\omega\cap B|
$$
when $h\to 0$. Here as usual,
$|E|$ means the cardinality for a subset $E$ of $h\Z^d$ while it denotes the Lebesgue measure of $E$ for a subset
of $\R^d$.

Further, we denote by $Q_{L}(x)=x+[-L/2,L/2]^d$ the cube centered at $x$ of side length $L$ in $\R^d$ and $Q_L:=Q_L(0)$.
When we work in a discrete space, we will still write $Q_L(x)$ for $Q_L(x)\cap h\Z^d$.
For convenience we always assume that $L/2$ is a multiple of $h$. We will consider the following notions of thickness of sets that are common in control theory:

\begin{definition}
    Let $\omega$ be an open subset of $\R^d$ with smooth boundary. Let $L>0$ and $\gamma\in(0,1]$.
    \begin{enumerate}
        \item We say that $\omega$ is $(L,\gamma)$-\emph{equidistributed} (abbreviated \emph{equidistributed}) if 
    there exists a sequence $\lbrace z_k \rbrace_{k\in\Z^d} \subset \R^d$ such that
    \begin{equation*}
        \omega\cap \bigl(L k+Q_L\bigr)\supset  Q_{\gamma L}(z_k).
    \end{equation*}  
    
        \item We say that $\omega$ is \emph{thick} (at scale $L>0$) if, for every $x\in\R^d$,
    $$
        \bigl|\omega\cap\bigl(x+Q_L\bigr)\bigl|\geq\gamma|Q_L|.
    $$ 
    \end{enumerate}
    \end{definition}
    
Equidistributed sets provide a simple and natural way to construct nontrivial thick sets, although other constructions exist, for instance through sublevel sets of multivariate polynomials. The two notions are closely related.
From a geometric viewpoint, equidistributed sets can be viewed as a regular subclass of thick sets.

\subsection{Schrödinger operators for bounded potentials}
For a real valued, non-negative bounded potential $V\in \cc_b(\R^d)$, we restrict $V$ to $h\Z^d$, and define the Schrödinger operator $P_h=-\Delta_h+V$, which is a bounded self-adjoint positive operator on $\ell^2(h\Z^d)$. Based on the spectral theorem (see for instance \cite[Section 2.5]{davi1995spec}), one can define the spectral projector by
\begin{equation}\label{def-project}
    \Pi_{\mu,h} u:=\un_{P_h\le \mu}u:=\int_{0}^\mu \d m_\lambda u
\end{equation}
where $\d m_\lambda$ is the spectral measure of $P_h$. Moreover, we have
\begin{equation*}
    F(P_h)=\int_{0}^\infty F(\lambda)\d m_\lambda,\quad \forall F\in L^\infty(\R),
\end{equation*}
which satisfies
\begin{equation*}
    \scal{ G(P_h)u, K(P_h)u }=\int_{0}^\infty G(\lambda)\overline{K(\lambda)} \langle \d m_\lambda u,u\rangle_{\ell^2(h\Z^d)},\quad \forall u\in \ell^2(h\Z^d)
\end{equation*}
for any $G,K\in L^\infty(\R)$. We define the spectral subspace $\mathcal{E}_\mu(P_h)$ for any $\mu\in\R$ as the space of functions with spectrum in $(-\infty,\mu]$, that is,
\begin{equation}
    \mathcal{E}_\mu(P_h):=\lbrace u\in \ell^2(h\Z^d):\Pi_{\mu,h} u=u \rbrace.\label{def-subspace}
\end{equation}

\subsection{Schrödinger operators for power growth potentials}
For a potential satisfying the confining condition $V(x)\to \infty$ when $|x|\to \infty$, there exists an orthonormal sequence in $\ell^2(h\Z^d)$ of eigenfunctions $\lbrace\phi_{k}\rbrace_{k\in\N}$ and the corresponding sequence of eigenvalues $\lbrace \lambda_{k}\rbrace_{k\in\N}$ such that 
\begin{equation}\label{a-4}
    (-\Delta_h+V) \phi_k=\lambda_k\phi_k,\quad \forall k\in\N,
\end{equation}
and 
\begin{equation*}
    \lambda_0\le \lambda_1\le \cdots\le \lambda_k\le\cdots\to \infty.
\end{equation*}
Here one should notice that each $\lambda_k$ and $\phi_k$ depend on $h$. For simplicity, we neglect the explicit dependence in those symbols. Under this setting, the spectral subspace is a finite-dimensional subspace of $\ell^2(h\Z^d)$ and it can be written in an explicit form
\begin{equation*}
    \mathcal{E}_\mu (P_h)=\bigl\lbrace \sum_{\lambda_k\le \mu}c_k\phi_k: c_k\in\C \bigr\rbrace.
\end{equation*}

The following proposition is a discrete analogue of the localization property.
\begin{proposition}\label{prp-localization}
    Let $V\ge c|x|^\beta$ for some positive constants $c>0$ and $\beta>0$. Then there exists a positive constant $\hat{C}:=\hat{C}(c,\beta)$, such that for any $\mu>0$ and any $\phi\in\mathcal{E}_\mu(P_h)$, we have
    \begin{equation}\label{a-3}
        \|\phi\|^2_{\ell^2(h\Z^d)}\le 2\|\phi\|^2_{\ell^2\left(Q_{\Lambda}(0)\right)}
    \end{equation}
    when $\Lambda\geq L_\mu ={\hat{C}\mu^{1/\beta}}$.
\end{proposition}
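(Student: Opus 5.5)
The plan is to use the quadratic form of $P_h$ on the spectral subspace together with the positivity of $-\Delta_h$. For $\phi\in\mathcal{E}_\mu(P_h)$, the spectral theorem gives
\[
\scal{P_h\phi,\phi}=\int_0^\mu \lambda\,\scal{\d m_\lambda\phi,\phi}\le \mu\|\phi\|^2_{\ell^2(h\Z^d)} .
\]
In the confining case this is immediate from the expansion $\phi=\sum_{\lambda_k\le\mu}c_k\phi_k$, since then $\scal{P_h\phi,\phi}=\sum\lambda_k|c_k|^2$. Summation by parts, namely \eqref{Summation:eq2} applied on large cubes and justified because $\phi$ is a finite combination of eigenfunctions in $\ell^2$ (or by a truncation/limiting argument), gives
\[
\scal{-\Delta_h\phi,\phi}=\sum_j\|D^+_{j,h}\phi\|^2\ge 0 .
\]
We therefore obtain the key bound
\[
\sum_{x\in h\Z^d}V(x)|\phi(x)|^2\le \scal{P_h\phi,\phi}\le\mu\|\phi\|^2_{\ell^2(h\Z^d)}.
\]
This bound is uniform in $h$, and no discrete-specific estimate is needed beyond nonnegativity of the discrete Dirichlet form.

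Next we estimate the mass outside the cube. For $x\notin Q_\Lambda(0)$ we have $|x|_\infty>\Lambda/2$, hence $|x|\ge\Lambda/2$ and $V(x)\ge c(\Lambda/2)^\beta$. It follows that
\[
\|\phi\|^2_{\ell^2(h\Z^d\setminus Q_\Lambda)}\le \frac{1}{c(\Lambda/2)^\beta}\sum_{x}V(x)|\phi(x)|^2\le \frac{2^\beta\mu}{c\Lambda^\beta}\|\phi\|^2_{\ell^2(h\Z^d)}.
\]
Choose $\hat C=2\,(2/c)^{1/\beta}$, so that $\Lambda\ge\hat C\mu^{1/\beta}$ gives $2^\beta\mu/(c\Lambda^\beta)\le 1/2$. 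Then the outside mass is at most $\tfrac12\|\phi\|^2$. Writing $\|\phi\|^2=\|\phi\|^2_{Q_\Lambda}+\|\phi\|^2_{\text{outside}}$ and absorbing the second term into the left-hand side yields \eqref{a-3}. The constant $\hat C$ depends only on $c$ and $\beta$, as required. The convention that $L/2$ is a multiple of $h$ is irrelevant here, since we only use the pointwise lower bound on $V$.

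The only point requiring any care is the justification of the quadratic-form identity when $V$ is unbounded. There $P_h$ is unbounded, and one must check that $\phi\in\mathcal{E}_\mu$ lies in the form domain, i.e.\ that $\sum V|\phi|^2<\infty$, and that the summation by parts holds without boundary terms. Both follow because each eigenfunction $\phi_k$ lies in the domain of $P_h$ and $\mathcal{E}_\mu$ is finite-dimensional. On the domain, $-\Delta_h$ is bounded (norm at most $4d/h^2$), so $V\phi_k=\lambda_k\phi_k+\Delta_h\phi_k\in\ell^2$. The pairing $\scal{-\Delta_h\phi,\phi}=\sum_j\|D_{j,h}^+\phi\|^2$ is just the standard identity for the bounded operator $-\Delta_h$ on $\ell^2(h\Z^d)$, so no boundary terms arise. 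I therefore expect no genuine obstacle.
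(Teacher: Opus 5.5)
Your proposal is correct and follows the paper's proof. Both bound $\sum_x V(x)|\phi(x)|^2\le \scal{P_h\phi,\phi}\le \mu\|\phi\|^2_{\ell^2(h\Z^d)}$ using $-\Delta_h\ge 0$, then use $|x|\ge \Lambda/2$ off $Q_\Lambda(0)$ to bound the exterior mass by $\frac{2^\beta\mu}{c\Lambda^\beta}\|\phi\|^2_{\ell^2(h\Z^d)}\le\frac12\|\phi\|^2_{\ell^2(h\Z^d)}$, with the same constant $\hat C=2(2/c)^{1/\beta}$. Your observation that $V\phi_k=\lambda_k\phi_k+\Delta_h\phi_k\in\ell^2$, which places $\mathcal{E}_\mu(P_h)$ in the form domain, is a small justification the paper leaves implicit.
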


\begin{proof}
    Let $\phi\in \mathcal{E}_\mu(P_h)$ and we have
    \begin{equation*}
        \begin{aligned}
            \| |x|^{\frac{\beta}{2}} \phi\|^2_{\ell^2(h\Z^d)}=\frac{1}{c}\langle c |x|^\beta \phi,\phi \rangle_{\ell^2(h\Z^d)} \le \frac{1}{c}\langle P_h\phi,\phi\rangle \le \frac{\mu}{c} \|\phi\|^2_{\ell^2(h\Z^d)},
        \end{aligned}
    \end{equation*}
    where the last inequality follows by functional calculus and $\phi\in \mathcal{E}_\mu(P_h)$. Therefore one has
    \begin{eqnarray*}
     \|\phi\|^2_{\ell^2(h\Z^d\backslash Q_{L_\mu})}
     &=&\| |x|^{-\frac{\beta}{2}}|x|^{\frac{\beta}{2}} \phi\|^2_{\ell^2\left(h\Z^d\backslash Q_{L_\mu}\right)}
     \le (L_\mu/2)^{-\beta}\||x|^{\frac{\beta}{2}} \phi\|^2_{\ell^2\left(h\Z^d\backslash Q_{L_\mu}\right)}\\
     &\leq&(L_\mu/2)^{-\beta}\||x|^{\frac{\beta}{2}} \phi\|^2_{\ell^2(h\Z^d)}
     \leq \frac{ 2^\beta\mu }{c L_\mu^\beta}\|\phi\|^2_{\ell^2(h\Z^d)}\le \frac{1}{2}\|\phi\|^2_{\ell^2(h\Z^d)},
    \end{eqnarray*}
    provided the value of $L_\mu$ is chosen as
    \begin{equation*}
        L_\mu= 2\left(\frac{2\mu}{c}\right)^{\frac{1}{\beta}}.
    \end{equation*}
    This is equivalent to \eqref{a-3} so we completed the proof.
\end{proof}

We also have the following local Caccioppoli inequality:
\begin{proposition}\label{prp-local-Cacci}
    Let $h<\dfrac{1}{2}$, $L>1$, $x_0 \in h\Z^d$ and $V\in \cc_b^\infty\bigl(Q_{2L}(x_0)\bigr)$. Let $\phi$ satisfy $(-\Delta_h+V)\phi=0$ in $Q_{2L}(x_0)$. We have
    \begin{equation*}
        \| D_h^\pm\phi \|^2_{\ell^2(Q_L(x_0))}\le 2\left(\frac{264d}{L^2}+\|V\|_{L^\infty(Q_{2L})}\right)\|\phi\|^2_{\ell^2(Q_{2L}(x_0))}.
    \end{equation*}
\end{proposition}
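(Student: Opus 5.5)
The plan is to run the classical Caccioppoli argument with a discrete cutoff, using the product and summation-by-parts formulas \eqref{discrete:D:eq2}, \eqref{discretelaplacefg:eq1} and \eqref{Summation:eq2}.

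\textbf{Choice of cutoff.} Fix a function $\eta$ on $h\Z^d$ with the following properties:
\begin{itemize}
\item $0\le\eta\le1$, $\eta=1$ on $Q_L(x_0)$, and $\eta$ vanishes outside $Q_{3L/2}(x_0)$;
\item $|D_{j,h}^\pm\eta|\le C_0/L$ for every $j$.
\end{itemize}
For instance, take $\eta$ to be the restriction to $h\Z^d$ of a tensor product of one-dimensional piecewise linear functions. The hypothesis $h<1/2<L/2$ guarantees that the discrete transition layer between $Q_L$ and $Q_{3L/2}$ contains lattice points. Then $\eta^2\phi$ is supported in $Q_{2L}(x_0)$ and vanishes on its boundary, so \eqref{Summation:eq2} applies.

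\textbf{Energy identity.} Test the equation $-\Delta_h\phi+V\phi=0$ against $\eta^2\phi$ and sum over $Q_{2L}(x_0)$. Writing $-\Delta_h=-\sum_j D_{j,h}^-D_{j,h}^+$ and summing by parts gives
\[
\sum_{j}\sum_x D_{j,h}^+\phi\,D_{j,h}^+(\eta^2\phi)=-\sum_x V\eta^2\phi^2\le \|V\|_{L^\infty(Q_{2L})}\|\phi\|^2_{\ell^2(Q_{2L})}.
\]
(Note that the inequality is the right one regardless of the sign of $V$.)

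Next expand $D_{j,h}^+(\eta^2\phi)=D_{j,h}^+\phi\,M_j^+(\eta^2)+M_j^+\phi\,D_{j,h}^+(\eta^2)$ using \eqref{discrete:D:eq2}. The first piece yields the good term
\[
\sum_x M_j^+(\eta^2)\,|D_{j,h}^+\phi|^2 .
\]
For the cross term, write $D_{j,h}^+(\eta^2)=D_{j,h}^+\eta\,(\eta(x)+\eta(x+he_j))$. The factor $\eta(x)+\eta(x+he_j)$ is controlled by $2\sqrt{M_j^+(\eta^2)}$, since $(a+b)^2\le 2(a^2+b^2)$. Applying Young's inequality $2ab\le \tfrac12a^2+2b^2$, half of the good term absorbs the cross term, at the price of
\[
C\sum_x |D_{j,h}^+\eta|^2\,|M_j^+\phi|^2\le \frac{C'}{L^2}\,\|\phi\|^2_{\ell^2(Q_{2L})}.
\]

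\textbf{Conclusion for $D_h^+$ and $D_h^-$.} On $Q_L(x_0)$ we have $M_j^+(\eta^2)\ge 1/2$; in fact it equals $1$ at points whose forward neighbour is also in $Q_L$, and at worst the boundary layer is covered by taking the plateau of $\eta$ slightly larger. This gives the bound for $D_h^+\phi$ with a constant of the form $2(C''d/L^2+\|V\|_\infty)$. The bound for $D_h^-$ follows from the identity $D_{j,h}^-\phi(x)=D_{j,h}^+\phi(x-he_j)$: the $D_h^-$ norm on $Q_L$ is a $D_h^+$ norm on a shifted cube, which is still inside the plateau of $\eta$ when that plateau is taken to be $Q_{L+2h}$.

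\textbf{Main obstacle.} The only real difficulty is bookkeeping: tracking the explicit constant $264d$ through the factor $(C_0/L)^2$ (where $C_0$ depends on the plateau enlargement, and uses $h<1/2$, $L>1$), the Young inequality factors, and the $M_j^\pm$ averages at the edges of the cutoff. I expect to choose $\eta$ piecewise linear with slope roughly $4/L$ and then verify that the numerical factors stay below $264$.
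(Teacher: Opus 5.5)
Your proposal is correct and is essentially the paper's argument: a cutoff equal to $1$ on $Q_L(x_0)$ and supported in $Q_{3L/2}(x_0)$, summation by parts of the equation against $\eta^2\phi$, Young's inequality to absorb half of the gradient term, and a remainder of size $d/L^2$. Your scheme gives the constant $2(2C_0^2d/L^2+\|V\|_{L^\infty(Q_{2L})})$, so any $C_0\le 11$ suffices, and a piecewise linear $\eta$ gives $C_0\le 6$.

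The only real difference is the product rule. Your symmetric splitting puts the weight $M_j^+(\eta^2)$ on the good term, which is why you need the plateau $Q_{L+2h}$. That enlargement is available thanks to the paper's standing convention $L/2\in h\Z$, which together with $h<1/2<L/2$ gives $L\ge 4h$ (your stated justification from $h<1/2<L/2$ alone would not suffice). The paper instead splits $D_{j,h}^-(\chi^2)=h(D_{j,h}^-\chi)^2+2\chi(\cdot-he_j)D_{j,h}^-\chi$, so its good term carries exactly $\chi^2$ and no enlargement is needed.
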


\begin{proof}
We will only prove the inequality for $D_h^+$, the argument for $D_h^-$ is similar.
Without loss of generality, we just fix $x_0=0$. Choose a cutoff function $\chi\in C_c^\infty (Q_{3L/2})$ such that $\chi=1$ in $Q_{L}$ and, for $j=1,\ldots,d$, $|D_{j,h}^\pm\chi|<\frac{8}{L}$. Then
\begin{equation}
\| D_h^+\phi \|^2_{\ell^2(Q_L)}\leq \| \chi D_h^+\phi \|^2_{\ell^2(Q_{2L})}
=\sum_{j=1}^d\langle \chi D_{j,h}^+\phi,\chi D_{j,h}^+\phi\rangle_{\ell^2(Q_{2L})}.
\label{eq:prp-local-Cacci}
\end{equation}
But,
\begin{equation*}
        \langle \chi D_{j,h}^+\phi,\chi D_{j,h}^+\phi\rangle_{\ell^2(Q_{2L})}
        = \sum_{x\in Q_{2L}}\chi(x)^2 D_{j,h}^+\phi(x) D_{j,h}^+ \overline{\phi(x)}
        =-\sum_{x\in Q_{2L}} D_{j,h}^-\bigl(\chi^2 D_{j,h}^+\phi\bigr)(x)\overline{\phi(x)}
\end{equation*}
with the  integration by parts formula, taking into consideration that $\chi(x)=0$ outside $Q_{2L}$.
Next write
\begin{equation*}
\begin{aligned}
D_{j,h}^-\bigl(\chi^2 D_{j,h}^+\phi\bigr)(x)
=& \frac{1}{h}\bigl(\chi(x)^2D_{j,h}^+ \phi(x)-\chi(x-he_j)^2D_{j,h}^+\phi(x-he_j)\bigr)\\
=& \frac{1}{h}\Bigl(\chi(x)^2\bigl(D_{j,h}^+ \phi(x)-D_{j,h}^+\phi(x-he_j)\bigr)+\bigl(\chi(x)^2-\chi(x-he_j)^2\bigr)D_{j,h}^+\phi(x-he_j)\Bigr)\\
=& \chi(x)^2D_{j,h}^-D_{j,h}^+ \phi(x)+D_{j,h}^-\chi^2(x)D_{j,h}^+\phi(x-he_j)
\end{aligned}
\end{equation*}
Summing over all directions $j=1,\ldots,d$ and over $x\in Q_{2L}$, we thus obtain
\begin{equation}
        \| \chi D_h^+\phi \|^2_{\ell^2(Q_{2L})}
    =-\sum_{j=1}^d\sum_{x\in Q_{2L}}\chi(x)^2D_{j,h}^-D_{j,h}^+ \phi(x) \overline{\phi(x)}
    -\sum_{j=1}^d\sum_{x\in Q_{2L}}D_{j,h}^-\chi^2(x)D_{j,h}^+\phi(x-he_j)\overline{\phi(x)}.\label{a-7}
\end{equation}
We now bound each of these two sums. For the first sum, invert summations over $j$ and $x$ and use that
    \begin{equation*}
        \sum_{j=1}^d D_{j,h}^- D_{j,h}^+\phi (x) =\Delta_h \phi(x)=V(x)\phi(x).
    \end{equation*}
We then obtain
    \begin{align}
   \abs{\sum_{j=1}^d\sum_{x\in Q_{2L}}\chi(x)^2D_{j,h}^-D_{j,h}^+ \phi(x)\overline{\phi(x)}}
    &\leq\sum_{x\in Q_{2L}}\chi(x)^2\abs{\left(\sum_{j=1}^dD_{j,h}^-D_{j,h}^+ \phi(x)\right)\overline{\phi(x)}}\notag\\
    &=\sum_{x\in Q_{2L}}\chi(x)^2|V(x)|\,|\phi(x)|^2\leq \|V\|_\infty\|\phi\|^2_{\ell^2(Q_{2L})}\label{a-66}
   \end{align}
 since $\chi\leq 1$.

For the second term in \eqref{a-7}, using Young's inequality we get 
\begin{align}
\Bigl|\sum_{j=1}^d&\sum_{x\in Q_{2L}}D_{j,h}^-\chi^2(x)D_{j,h}^+\phi(x-he_j)\overline{\phi(x)}\Bigr| \notag \\
\leq& \sum_{j=1}^d\sum_{x\in Q_{2L}}2\frac{L}{32}\abs{D_{j,h}^-\chi^2(x)D_{j,h}^+\phi(x-he_j)}\,\frac{16}{L}|\phi(x)|\notag\\
\leq&\sum_{j=1}^d\left(\left(\frac{L}{32}\right)^2\norm{D_{j,h}^-\chi^2D_{j,h}^+\phi(\cdot-he_j)}_{\ell^2(Q_{2L})}^2+\left(\frac{16}{L}\right)^2\|\phi\|_{\ell^2(Q_{2L})}^2\right).\label{a-6}
\end{align}
Now write
\begin{eqnarray*}
D_{j,h}^-\chi^2(x)&=&\chi(x)D_{j,h}^-\chi(x)+\chi(x-he_j)D_{j,h}^-\chi(x)\\
&=&\bigl(\chi(x)-\chi(x-he_j)\bigr)D_{j,h}^-\chi(x)+2\chi(x-he_j)D_{j,h}^-\chi(x)
\end{eqnarray*}
so that
$$
D_{j,h}^-\chi^2(x)D_{j,h}^+\phi(x-he_j)=\bigl(D_{j,h}^-\chi(x)\bigr)^2\bigl(\phi(x)-\phi(x-he_j)\bigr)
+2\chi(x-he_j)D_{j,h}^-\chi(x)D_{j,h}^+\phi(x-he_j).
$$
Using the bound $|D_{j,h}^-\chi(x)|\leq\dfrac{8}{L}$ and the support properties of $\chi$, we obtain
$$
\norm{D_{j,h}^-\chi^2D_{j,h}^+\phi(\cdot-he_j)}_{\ell^2(Q_{2L})}\leq \left(\frac{8}{L}\right)^2\|\phi\|_{\ell^2(Q_{2L})}
+\frac{16}{L} \|\chi D_{j,h}^+\phi\|_{\ell^2(Q_{2L})}
$$
and therefore
$$
\norm{D_{j,h}^-\chi^2D_{j,h}^+\phi(\cdot-he_j)}_{\ell^2(Q_{2L})}^2\leq 2\left(\frac{8}{L}\right)^4\|\phi\|_{\ell^2(Q_{2L})}^2
+2\left(\frac{16}{L}\right)^2 \|\chi D_{j,h}^+\phi\|_{\ell^2(Q_{2L})}^2.
$$
Thus
$$
\left(\frac{L}{32}\right)^2\norm{D_{j,h}^-\chi^2D_{j,h}^+\phi(\cdot-he_j)}_{\ell^2(Q_{2L})}^2\leq
\frac{8}{L^2}\|\phi\|_{\ell^2(Q_{2L})}^2
+\frac{1}{2}\|\chi D_{j,h}^+\phi\|_{\ell^2(Q_{2L})}^2.
$$
Substituting this into \eqref{a-6} we obtain
\begin{align}
\Bigl|\sum_{j=1}^d&\sum_{x\in Q_{2L}}D_{j,h}^-\chi^2(x)D_{j,h}^+\phi(x-he_j)\overline{\phi(x)}\Bigr| \notag \\
\leq& \sum_{j=1}^d\left(\frac{264}{L^2}\|\phi\|_{\ell^2(Q_{2L})}^2
+\frac{1}{2}\|\chi D_{j,h}^+\phi\|_{\ell^2(Q_{2L})}^2\right)
\end{align}
Injecting the result together with \eqref{a-66} into \eqref{a-7} we obtain
    $$
    \sum_{j=1}^d\|\chi(x)D_{j,h}^+\phi\|_{\ell^2(Q_{2L})}^2\leq \frac{1}{2}\sum_{j=1}^d\|\chi(x)D_{j,h}^+\phi\|_{\ell^2(Q_{2L})}^2+
    \left(\|V\|_\infty+\frac{264d}{L^2}\right)\|\phi\|_{\ell^2(Q_{2L})}^2
    $$
    Absorbing the first term on the left in the right-hand side and using \eqref{eq:prp-local-Cacci}, we
    obtain the desired inequality.
\end{proof}

\section{Proof of main theorems}\label{sec:proof-of-main-thms}

\subsection{Carleman estimates}

In this section, we give a Carleman estimate associated to the semi-discrete equation defined by $P_h=-\Delta_h+V$,
\begin{equation*}
\begin{cases}
    (\partial_t^2-P_h)u(t,x)=0, \\
    u=0 \quad\mbox{on}\quad \{0\}\times Q, \\
    u=0 \quad \mbox{on} \quad (0,T_*)\times \partial Q.
\end{cases}
\end{equation*}
Here $Q$ is an arbitrary cube of side length $L$, $x+[-L/2,L/2]^d\subset h\Z^d$.

Before stating the Carleman estimate, we take a valid weight function $\varphi$ defined on a neighborhood
of $\overline{(0,T_*)\times Q_{L}}$ in $\R^d$. We first take $\mathcal{V}$ to be a bounded open set with smooth boundary containing $Q_L$.
We then take a smooth function $\psi$ defined on $\mathcal{V}$ that satisfies
the following properties
\begin{eqnarray*}
    |\nabla\psi|\geq c \mbox{ and } \psi>0 \mbox{ in } (0,T^*)\times Q_{L},  \quad && \\
     \partial_{n}\psi(t,x)<0 \mbox{ in } (0,T^*)\times \partial \mathcal{V}, \quad &&\partial_{i}^2\psi(t,x)\geq0 \mbox{ in } (0,T^*)\times \partial \mathcal{V}, \\
     \partial_t \psi \geq c \mbox{ on } \{0\}\times(Q_L\setminus\omega), \quad && \psi=\mbox{Cst and } \partial_t \psi \leq -c \mbox{ on }  \{T^*\}\times Q_L.
\end{eqnarray*}
Here $\partial_{n}$ is the outward unit normal to $\mathcal{V}$.
These conditions on $\psi$ are the conditions found in \cite[Assumption 1.3]{boye2010discr2}.
When $\mathcal{V}$ is a sufficiently small neighborhood of $Q_L$, the construction of such a $\psi$ is described in \cite[Appendix~A]{boye2010discr2}.
Finally, we take $\lambda$ a parameter and we set $\ffi=e^{\lambda\psi}$.
Note that $\ffi$ is smooth over $Q$ so that $\Delta\ffi$ is bounded.

We extend the Carleman estimate~\cite[Theorem 1.4]{boye2010discr2} (see also \cite[Theorem 2.2]{boye2010disc} for a 1-dimensional version) from $\partial_t^2+\Delta_h$ to the operator $P_h$:
\begin{theorem}\label{crc:Carleman:1}
Let $T_*>0$, $Q$ a cube and $\qc=[0,T_*]\times Q$.
    Let $\ffi$ be a weight function satisfying the above conditions.
    Let $V_1\in \cc_b^1(Q)$, $V_2\in \cc_b(Q)$ and $V=V_1+V_2$.

    There exists $s_0>1,h_0>0,\eps_0>0$ such that, when $h\leq h_0$, $sh\leq\eps_0$ and 
    $s>s_0\bigl(\|V_1\|_{W^{1,\infty}(Q)}^{\frac{1}{2}}+\|V_2\|_{L^\infty(Q)}^{\frac{2}{3}}+1\bigr)$,
    then, for every $u\in \cc^2([0,T_*]\times Q,\mathbb{C})$ satisfying $u=0$ on $\lbrace 0\rbrace \times Q$ and $u=0$ on $(0,T_*)\times \partial Q$, we have
    \begin{multline}\label{eq:Carleman:3.1}
        s^3\|e^{s\ffi}u\|^2_{L^2(\qc)}+s\|e^{s\ffi}\partial_t u\|^2_{L^2(\qc)}+
        s\|e^{s\ffi}D^+_h u\|^2_{L^2(\qc)} 
        +s\|e^{s\ffi}D^-_h u\|^2_{L^2(\qc)} \\
        +s\|e^{s\ffi(0,\cdot)}\partial_tu(0,\cdot)\|^2_{\ell^2(Q)}+se^{2s\ffi(T_*)}\|\partial_tu(T_*,\cdot)\|^2_{\ell^2(Q)}
        +s^3e^{2s\ffi(T_*)}\|u(T_*,\cdot)\|^2_{\ell^2(Q)} \\
        \lesssim \|e^{s\ffi}(\partial_t^2-P_h)u\|^2_{L^2(\qc)}+se^{2s\ffi(T_*)}\|D_h^+ u(T_*,\cdot)\|^2_{\ell^2(Q)} \\
        +se^{2s\ffi(T_*)}\|D_h^- u(T_*,\cdot)\|^2_{\ell^2(Q)} 
        +s\|e^{s\ffi(0,\cdot)}\partial_t u(0,\cdot)\|^2_{\ell^2(\omega)}.
    \end{multline}
\end{theorem}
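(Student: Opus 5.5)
The plan is a perturbation argument on top of the potential-free Carleman estimate \cite[Theorem 1.4]{boye2010discr2} for $\partial_t^2+\Delta_h$, with the same weight $\ffi=e^{\lambda\psi}$ and boundary terms. We write $(\partial_t^2-P_h)u=(\partial_t^2+\Delta_h)u-V_1u-V_2u$. Treating the two parts of $V$ differently is what produces the two thresholds $\|V_1\|_{W^{1,\infty}}^{1/2}$ and $\|V_2\|_\infty^{2/3}$.

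\textbf{Step 1 (bounded part $V_2$).} We apply the known estimate with right-hand side $\|e^{s\ffi}(\partial_t^2+\Delta_h)u\|^2$ and bound it by
\[
2\|e^{s\ffi}(\partial_t^2-P_h)u\|^2+4\|V_1e^{s\ffi}u\|^2+4\|V_2\|_\infty^2\|e^{s\ffi}u\|^2 .
\]
The last term is absorbed by the left-hand side term $s^3\|e^{s\ffi}u\|^2$ once $s^3\gg\|V_2\|_\infty^2$, which is exactly $s\gtrsim\|V_2\|_\infty^{2/3}$.

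\textbf{Step 2 (differentiable part $V_1$).} Treated the same way, $V_1$ would also only give the threshold $\|V_1\|_\infty^{2/3}$. To reach the better threshold $\|V_1\|_{W^{1,\infty}}^{1/2}$, we do not treat $V_1u$ as a pure source term. Instead we reopen the Carleman proof: conjugate $w=e^{s\ffi}u$, split the conjugated operator into its symmetric part $A$ and antisymmetric part $B$, and place $-V_1$ inside $A$. The only new terms in $2\Re\langle Aw,Bw\rangle$ are cross terms of the form $\Re\langle V_1w,\,s\,\nabla\ffi\cdot D_hw\rangle$ and $\Re\langle V_1 w, s\,\partial_t\ffi\,\partial_t w\rangle$, together with their discrete analogues involving $M_j^\pm$ and $\dc_j$.

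\textbf{Step 2, continued (integration by parts).} We integrate these cross terms by parts using \eqref{discrete:D:eq2} and \eqref{Summation:eq2}, moving the derivative onto $V_1\ffi$. This gives bulk contributions bounded by
\[
s\bigl(\|V_1\|_\infty+\|\nabla V_1\|_\infty\bigr)\|w\|^2 ,
\]
up to $O(h)$ errors from the averaging operators, which are controlled using $sh\le\eps_0$. These bulk terms are absorbed by $s^3\|w\|^2$ provided $s^2\gg\|V_1\|_{W^{1,\infty}}$, which is the claimed threshold. The boundary terms from the time integration by parts at $t=0$ and $t=T_*$ have the form $s\,V_1\,\partial_t\ffi\,|w|^2$. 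At $t=0$ they vanish because $u=0$ there. At $t=T_*$ they are bounded by $s\|V_1\|_\infty e^{2s\ffi(T_*)}\|u(T_*)\|^2$, which is absorbed by $s^3e^{2s\ffi(T_*)}\|u(T_*)\|^2$. No spatial boundary terms appear, since $u=0$ on $\partial Q$.

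\textbf{Step 3 (recovering the full estimate).} The remaining terms, namely the $\|e^{s\ffi}\partial_tu\|$ and $\|e^{s\ffi}D_h^\pm u\|$ weights and the boundary observation terms, are recovered exactly as in \cite{boye2010discr2}. We pass back from $w$ to $u$ using $|\partial_t w-e^{s\ffi}\partial_tu|\lesssim s|w|$ and its discrete analogue. The discrete analogue relies on $sh\le\eps_0$, which makes $e^{s\ffi(x\pm he_j)}/e^{s\ffi(x)}$ uniformly bounded.

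\textbf{Main obstacle.} The delicate part is Step 2. In the discrete calculus the products $V_1\cdot D_h^\pm w$ do not integrate by parts cleanly: shifts of $V_1$ and of the weight appear, producing commutator terms like $(M_j^\pm V_1-V_1)D_j w=O(h\|\nabla V_1\|_\infty)D_jw$. These must be absorbed by $s\|D_hw\|^2$, which requires $h\|\nabla V_1\|_\infty\lesssim s$. This holds because $h\le h_0$ and $s\gtrsim\|V_1\|_{W^{1,\infty}}^{1/2}$, and checking it carefully is where I expect the work to lie.
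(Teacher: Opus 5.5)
Your proposal is correct and follows essentially the paper's route. The paper conjugates by $e^{s\ffi}$ and keeps $V_2$ as a source term absorbed by $s^3\|e^{s\ffi}u\|^2_{L^2(\qc)}$, which gives the $\|V_2\|_{L^\infty}^{2/3}$ threshold. It places $-V_1$ in the symmetric part and bounds the new cross terms $\scal{V_1v,\mathcal{B}_kv}$ by $s\|V_1\|_{W^{1,\infty}}\bigl(\|v\|^2_{L^2(\qc)}+\|v(T_*,\cdot)\|^2_{\ell^2(Q)}\bigr)$, using a spatial summation by parts and a time integration by parts in which $v(0)=0$ kills the $t=0$ term. The commutator terms in $D_hw$ that you single out as the main obstacle never arise in the paper: for the centered difference $\dc_j$, the exact index shift $\sum_x a(x)v(x)\bigl(v(x+he_j)-v(x-he_j)\bigr)=-h\sum_x D_{j,h}^+a(x)\,v(x)v(x+he_j)$ leaves only zeroth-order terms in $v$, which Cauchy--Schwarz bounds directly.
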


\begin{proof}
    Without loss of generality, we can assume that $u$ is real.
    Let $\ffi$ be defined as above, and let $v=e^{s\ffi}u$.
    We have
    \begin{equation}\label{Carleman:1}
            e^{s\ffi}(\partial^2_t-P_h)u=e^{s\ffi}(\partial_t^2-P_h)(e^{-s\ffi}v)
             =e^{s\ffi}(\partial_t^2e^{-s\ffi}v) + e^{s\ffi}\sum_{j=1}^dD_{j,h}^+D_{j,h}^-(e^{-s\ffi}v)-Vv.
    \end{equation}
   First we see that 
   $$
   e^{s\ffi}\partial_t^2(e^{-s\ffi}v)=e^{s\ffi}(\partial_t^2e^{-s\ffi})v+2e^{s\ffi}(\partial_te^{-s\ffi})\partial_tv+\partial_t^2v.
   $$
Further, applying \eqref{discrete:D:eq2} twice, we have
   \begin{equation*}
       \begin{aligned}
           e^{s\ffi}D_{j,h}^+&D_{j,h}^-(e^{-s\ffi}v)  \\
           =& e^{s\ffi}(M_j^+M_j^-e^{-s\ffi})D_{j,h}^+D_{j,h}^-v+e^{s\ffi}(D_{j,h}^+D_{j,h}^-e^{-s\ffi})M_j^+M_j^-v +2e^{s\ffi}\dc_{j}e^{-s\ffi}\dc_{j}v.
       \end{aligned}
   \end{equation*}
We now introduce 
    \begin{equation*}
    \begin{aligned}
        \mathcal{A}_1v&=\partial_t^2v+\sum_{j=1}^d e^{s\ffi}(M_j^+M_j^-e^{-s\ffi})D_{j,h}^+D_{j,h}^-v, \\
        \mathcal{A}_2v&=e^{s\ffi}(\partial_t^2e^{-s\ffi})v+\sum_{j=1}^de^{s\ffi}(D_{j,h}^+D_{j,h}^-e^{-s\ffi})M_j^+M_j^-v, \\
        \mathcal{B}_1v&=2e^{s\ffi}(\partial_te^{-s\ffi})\partial_tv+\sum_{j=1}^d2e^{s\ffi}\dc_je^{-s\ffi}\dc_jv,\\
        \mathcal{B}_2v&=-2s(\Delta\ffi)v,
    \end{aligned}
    \end{equation*}
and $\mathcal{A}v=\mathcal{A}_1v+\mathcal{A}_2v-V_1v$ as well as $\mathcal{B}v=\mathcal{B}_1v+\mathcal{B}_2v$.
Equation \eqref{Carleman:1} then reads 
    $$
    e^{s\ffi}(\partial_t^2-P_h)(e^{-s\ffi}v)=\mathcal{A}v+\mathcal{B}v-V_2v+2s(\Delta \ffi)v.
    $$
    As $|\Delta\ffi|\leq C_\ffi$, where $C_\ffi$ is a constant independent of $s$, we obtain
    $$
    \|\mathcal{A}v+\mathcal{B}v\|^2_{L^2(\qc)}\lesssim\|e^{s\ffi}(\partial_t^2-P_h)u\|^2_{L^2(\qc)}+\|V_2v\|^2_{L^2(\qc)}+s^2\|v\|^2_{L^2(\qc)}.
    $$
    On the other hand
    \begin{eqnarray*}
\|\mathcal{A}v+\mathcal{B}v\|^2_{L^2(\qc)}
&=&\|\mathcal{A}v\|^2_{L^2(\qc)}+\|\mathcal{B}v\|^2_{L^2(\qc)}+2\Re\scal{\mathcal{A}v,\mathcal{B}v}_{L^2(\qc)}\notag\\
&\geq&2\Re\scal{\mathcal{A}v,\mathcal{B}v}_{L^2(\qc)}\notag\\
&=&2\sum_{j,k=1}^2\Re\scal{\mathcal{A}_jv,\mathcal{B}_kv}_{L^2(\qc)}
-2\sum_{k=1}^2\Re\scal{V_1v,\mathcal{B}_kv}_{L^2(\qc)}.
    \end{eqnarray*}
 We thus obtain
\begin{multline}
\label{eq:extra3}
\sum_{j,k=1}^2\Re\scal{\mathcal{A}_jv,\mathcal{B}_kv}_{L^2(\qc)}\\
\lesssim\|e^{s\ffi}(\partial_t^2-P_h)u\|^2_{L^2(\qc)}+\|V_2v\|^2_{L^2(\qc)}+s^2\|v\|^2_{L^2(\qc)}
+\sum_{k=1}^2|\scal{V_1v,\mathcal{B}_kv}_{L^2(\qc)}|.
\end{multline}

The four scalar products $\scal{\mathcal{A}_jv,\mathcal{B}_kv}_{L^2(\qc)}$ on the left have already been lower bounded in \cite[Section 3]{boye2010disc}, who
established that
$$
    \begin{aligned}
\sum_{j,k=1}^2\Re\scal{\mathcal{A}_jv,\mathcal{B}_kv}_{L^2(\qc)}\geq& s^3\|e^{s\ffi}u\|^2_{L^2(\qc)}+s\|e^{s\ffi}\partial_t u\|^2_{L^2(\qc)}
        + s\|e^{s\ffi}D_h^+ u\|^2_{L^2(\qc)}+ s\|e^{s\ffi}D_h^- u\|^2_{L^2(\qc)} \\
        &+s\|e^{s\ffi(0,\cdot)}\partial_tu(0,\cdot)\|^2_{\ell^2(Q)}+se^{2s\ffi(T_*)}\|\partial_tu(T_*,\cdot)\|^2_{\ell^2(Q)}\\
        &+s^3e^{2s\ffi(T_*)}\|u(T_*,\cdot)\|^2_{\ell^2(Q)} -se^{2s\ffi(T_*)}\|D_h^+ u(T_*,\cdot)\|^2_{\ell^2(Q)} \\
        &-se^{2s\ffi(T_*)}\|D_h^- u(T_*,\cdot)\|^2_{\ell^2(Q)} 
        -s\|e^{s\ffi(0,\cdot)}\partial_t u(0,\cdot)\|^2_{\ell^2(\omega)}.
        \end{aligned}
$$
It therefore remains to estimate the two scalar products $\scal{V_1v,\mathcal{B}_1v}_{L^2(\qc)}$
and $\scal{V_1v,\mathcal{B}_2v}_{L^2(\qc)}$.

For the first scalar product, we write
\begin{equation*}
    \begin{aligned}
         \scal{V_1v,\mathcal{B}_1v}=& 2\int_{(0,T_*)}\sum_{x\in Q}\sum_{j=1}^d V_1(x)v(t,x)e^{s\ffi(t,x)}\dc_je^{-s\ffi}(t,x)\dc_jv(t,x) \d t \\
         &+2\int_{(0,T_*)}\sum_{x\in Q} V_1(x)v(t,x)e^{s\ffi(t,x)}(\partial_te^{-s\ffi(t,x)})\partial_tv(t,x)\d t \\
         &= I_1+I_2        
    \end{aligned}
    \end{equation*}
    and we bound $I_1$ and $I_2$ separately. First
    \begin{equation*}
    \begin{aligned}
                I_1&=\bigg|\int_{(0,T_*)}\sum_{x\in Q}\sum_{j=1}^d \frac{1}{2h} V_1(x)e^{s\ffi(t,x)}\dc_je^{-s\ffi}(t,x) v(t,x)\left(v(t,x+he_j)-v(t,x-he_j)\right) \d t \bigg|\\
                &=\bigg|\int_{(0,T_*)}\sum_{x\in Q}\sum_{j=1}^d \frac{1}{h}\bigg(V_1(x)e^{s\ffi(t,x)}\dc_je^{-s\ffi}(t,x)-V_1(x+he_j)e^{s\ffi(t,x+he_j)}\dc_je^{-s\ffi(t,x+he_j)}\bigg)\\
                &\qquad\qquad\qquad\times v(t,x)v(t,x+he_j) \d t\bigg| \\
                &=\bigg|\int_{(0,T_*)}\sum_{x\in Q}\sum_{j=1}^dD_{j,h}^+\bigl[V_1e^{s\ffi}\dc_je^{-s\ffi}\bigr](x)v(t,x)v(t,x+he_j) \d t\bigg| \\
                &\lesssim \norm{\sum_{j=1}^dD_{j,h}^+\bigl[V_1e^{s\ffi}\dc_je^{-s\ffi}\bigr]}_\infty\|v\|_{L^2(\qc)}^2\leq
                \left(\sum_{j=1}^d\norm{D_{j,h}^+\bigl[V_1e^{s\ffi}\dc_je^{-s\ffi}\bigr]}_\infty\right)\|v\|_{L^2(\qc)}^2
    \end{aligned}
    \end{equation*}
    by Cauchy-Schwarz. We now estimate the $L^\infty$ norm in this expression.
    The variable $t$ plays no role and, to simplify notation, we omit it.
    Write
 \begin{equation*}
        G_j(x):=\frac{1}{h}V_1(x)\Psi_j(x), \text{ and }
        \Psi_j(x):=\frac{1}{2}\left(e^{s(\ffi(x)-\ffi(x+he_j))}-e^{s(\ffi(x)-\ffi(x-he_j))}\right),
    \end{equation*}
so that $D_{j,h}^+G_j(x)=h^{-1}D_{j,h}^+V_1(x)\Psi_j(x+he_j)+h^{-1}V_1(x)D_{j,h}^+\Psi_j(x)$ thus 
\begin{eqnarray*}
\norm{D_{j,h}^+\bigl[V_1e^{s\ffi}\dc_je^{-s\ffi}\bigr]}_\infty\leq
h^{-1}\|\partial_jV_1\|_\infty \|\Psi_j\|_\infty+h^{-1}\|V_1\|_\infty\|D_{j,h}^+\Psi_j(x)\|_\infty.
\end{eqnarray*}
Now $|s(\ffi(x)-\ffi(x\pm he_j))|\lesssim sh\lesssim 1$ since $\partial_j\ffi$ is bounded and $sh\leq\eps_0$.
Further 
$$
|s(\ffi(x)-\ffi(x+he_j))-s(\ffi(x)-\ffi(x-he_j))|=s|\ffi(x+he_j)-\ffi(x-he_j)|\lesssim sh.
$$
From the Mean Value Theorem, $|e^a-e^b|\lesssim |a-b|$ when $|a|,|b|\lesssim 1$,
we obtain
$$
|\Psi_j(x)|=\frac{1}{2}\abs{e^{s(\ffi(x)-\ffi(x+he_j))}-e^{s(\ffi(x)-\ffi(x-he_j))}}\lesssim sh.
$$
As $\|D_{j,h}^+\Psi_j(x)\|_\infty\leq\|\partial_j\Psi_j\|_\infty$ and
\begin{equation*}
\begin{aligned}
\partial_j\Psi_j(x)=&\frac{s}{2}\Bigl(e^{s(\ffi(x)-\ffi(x+he_j))}\bigl(\partial_j\ffi(x)-\partial_j\ffi(x+he_j)\bigr)
-e^{s(\ffi(x)-\ffi(x-he_j))}\bigl(\partial_j\ffi(x)-\partial_j\ffi(x-he_j)\bigr)\Bigr)\\
=&\frac{s}{2}\bigl(e^{s(\ffi(x)-\ffi(x+he_j))}-e^{s(\ffi(x)-\ffi(x-he_j))}\bigr)\bigl(\partial_j\ffi(x)-\partial_j\ffi(x+he_j)\bigr)\\
&+\frac{s}{2}e^{s(\ffi(x)-\ffi(x-he_j))}\bigl(\partial_j\ffi(x-he_j)-\partial_j\ffi(x+he_j)\bigr).
\end{aligned}
\end{equation*}
A similar argument shows that $\|\partial_j\Psi_j\|\lesssim (sh)^2+sh\lesssim sh$ since $sh\lesssim 1$. This leads to
$$
\norm{D_{j,h}^+\bigl[V_1e^{s\ffi}\dc_je^{-s\ffi}\bigr]}_\infty\lesssim s\|V_1\|_{W^{1,\infty}(Q)}
$$
and then
    \begin{equation}\label{eq:estV1B1:eq1}
        I_1\lesssim s\|V_1\|_{W^{1,\infty}(Q)}\|v\|_{L^2(\qc)}^2.
    \end{equation}

    For the second integral 
    \begin{equation}\label{eq:estV1B1:eq2}
        \begin{aligned}
            |I_2|&\leq s\sum_{x\in Q}|V_1(x)|\left|\int_{(0,T_*)} \partial_t\ffi(t,x)\partial_tv^2(t,x)\d t\right| \\
            &\leq s \sum_{x\in Q}|V_1(x)|\abs{\left[\partial_t\ffi(t,x)v^2(t,x)\right]_0^{T_*}} + s\sum_{x\in Q}|V_1(x)|\int_{(0,T_*)} |\partial_t^2\ffi(t,x)||v(t,x)|^2\d t \\
            &\leq s \|V_1\|_{L^{\infty}(Q)}(\|v\|^2_{L^2(\qc)}+\|v(T_*,\cdot)\|^2_{\ell^2(Q)})\\
            &\leq s \|V_1\|_{W^{1,\infty}(Q)}\bigl(\|v\|^2_{L^2(\qc)}+\|v(T_*,\cdot)\|^2_{\ell^2(Q)}\bigr).
        \end{aligned}
    \end{equation}
    where we integrated by parts in the second line and used that $\partial_t\ffi,\partial_t^2\ffi$ are bounded in the last line.

The second one is the easier:
    \begin{equation}
    \begin{aligned}
        |\scal{V_1v,B_2v}|&=2\abs{\int_{(0,T_*)}\sum_{x\in Q} s\Delta\ffi(t,x)V_1(x)|v(t,x)|^2 \d t} \\
        &\lesssim s \|V_1\|_{L^{\infty}(Q)}\|v\|^2_{L^2(\qc)}\leq  
        s \|V_1\|_{W^{1,\infty}(Q)}\|v\|^2_{L^2(\qc)}
    \end{aligned} 
    \label{eq:estV1B2}
    \end{equation}
since $|\Delta\ffi|\lesssim 1$.

Grouping the three estimates \eqref{eq:estV1B1:eq1}, \eqref{eq:estV1B1:eq2} and \eqref{eq:estV1B2},  we now get
\begin{align*}
\|V_2v\|^2_{L^2(\qc)}&+\sum_{k=1}^2|\scal{V_1v,\mathcal{B}_kv}_{L^2(\qc)}|\\
\lesssim& 
\|V_2\|_{L^\infty(Q)}^2\|v\|^2_{L^2(\qc)}+
    s\|V_1\|_{W^{1,\infty}(Q)}\|v\|_{L^2(\qc)}^2+\|v(T_*,\cdot)\|^2_{\ell^2(Q)}\\
     =&\bigl(\|V_2\|_{L^\infty(Q)}^2 +s\|V_1\|_{W^{1,\infty}(Q)}\bigr)\|e^{s\ffi}u\|^2_{L^2(\qc)}+
     s\|V_1\|_{W^{1,\infty}(Q)}e^{s\ffi(T_*)}\|u(T_*,\cdot)\|^2_{\ell^2(Q)}.
\end{align*}

We have now estimated each of the terms on the right-hand side of \eqref{eq:extra3} that
was not present in \cite{boye2010discr2}. Adding the lower bounds of the left hand side from
\cite{boye2010discr2}, we get
    \begin{equation*}
    \begin{aligned}
       s^3\|e^{s\ffi}u&\|^2_{L^2(\qc)}
       +s\|e^{s\ffi}\partial_t u\|^2_{L^2(\qc)}
       + s\|e^{s\ffi}D_h^+ u\|^2_{L^2(\qc)}
       + s\|e^{s\ffi}D_h^- u\|^2_{L^2(\qc)}\\
        +s\|&e^{s\ffi(0,\cdot)}\partial_tu(0,\cdot)\|^2_{\ell^2(Q)}
        +se^{2s\ffi(T_*)}\|\partial_tu(T_*,\cdot)\|^2_{\ell^2(Q)}
        +s^3e^{2s\ffi(T_*)}\|u(T_*,\cdot)\|^2_{\ell^2(Q)} \\
        \lesssim& \|e^{s\ffi}(\partial_t^2-P_h)u\|^2_{L^2(\qc)}
        +se^{2s\ffi(T_*)}\|D_h^+ u(T_*,\cdot)\|^2_{\ell^2(Q)}
        +se^{2s\ffi(T_*)}\|D_h^- u(T_*,\cdot)\|^2_{\ell^2(Q)} \\
        &+s\|e^{s\ffi(0,\cdot)}\partial_t u(0,\cdot)\|^2_{\ell^2(\omega)} 
        +\bigl(\|V_2\|_{L^\infty(Q)}^2+s \|V_1\|_{W^{1,\infty}(Q)}+s^2\bigr)\|e^{s\ffi}u\|^2_{L^2(\qc)}\\
        &+s \|V_1\|_{W^{1,\infty}(Q)}e^{s\ffi(T_*)}\|u(T_*,\cdot)\|^2_{\ell^2(Q)}.
    \end{aligned}
    \end{equation*}
    Finally, taking $s\gtrsim  \|V_1\|_{W^{1,\infty}(Q)}^{\frac{1}{2}}+\|V_2\|_{L^\infty(Q)}^{\frac{2}{3}}+1$
    allows us to absorb 
    \begin{equation*}
       (s\|V_1\|_{W^{1,\infty}(Q)}+\|V_2\|^2_{L^\infty(Q)}+s^2)\|e^{s\ffi}u\|^2_{L^2(\qc)}
    \end{equation*}   
       by the term $ s^3\|e^{s\ffi}u\|^2_{L^2(\mathcal{Q})}$
       from the left hand side, and also to absorb
    $$
    s\|V_1\|_{W^{1,\infty}(Q)}e^{2s\ffi(T_*)}\|u(T_*,\cdot)\|^2_{\ell^2(Q)}
    $$
        by $ s^3e^{2s\ffi(T_*)}\|u(T_*,\cdot)\|^2_{\ell^2(Q)}$. This completes the proof.
\end{proof}

\subsection{Proof of the spectral inequalities for bounded potentials}
In this section, we prove the spectral inequalities when the potential is bounded and non-negative.

\begin{proof}[Proof of Theorem~\ref{th:main:1}]
Let $v\in \mathcal{E}_\mu(P_h)$ and define
\begin{equation*}
    \widetilde{v}(t,x):=\int_{0}^\mu \frac{\sinh (\sqrt{\lambda}t)       }{\sqrt{\lambda}} \d m_\lambda v.
\end{equation*}
Then one can easily verify that 
\begin{equation}
    (\partial_t^2-P_h) \widetilde{v}=0 
\end{equation}
and 
\begin{equation*}
    \partial_t \widetilde{v}\lvert_{t=0}=v.
\end{equation*}

Let $\omega$ be an equidistributed subset of $\R^d$ and define $\omega_k=\omega\cap (Lk+Q_L)$ for any $k\in\Z^d$.
Each $\omega_k$ contains a cube $\gamma Q_{L}(x_k)$ and, without loss of generality,
we may shrink $\omega_k$ so that $\omega_k=\gamma Q_{L}(x_k)$. Note that $\omega_k\subset Lk+Q_{3L}$.

Let $0\leq \chi\leq 1$ be a smooth cutoff function satisfying
\begin{equation}
    \chi =1 \text{ in } Q_{3L},\,\, \chi =0 \text{ in } \R^{d}\backslash {Q_{6L}}. \label{chi:q}
\end{equation}
Fix $k$ in $\Z^d$ and let $\chi_k(x) = \chi(x-Lk)$ and define a new function 
$$
u(t,x)=\chi_k (x) \widetilde{v}(t,x).
$$
A simple computation shows that 
\begin{equation*}
    D_{j,h}^\pm u= \chi_k D_{j,h}^\pm \widetilde{v} + \widetilde{v} D_{j,h}^\pm \chi_k \pm hD_{j,h}^\pm \chi_k D_{j,h}^\pm \widetilde{v}
\end{equation*}
From \eqref{discretelaplacefg:eq1} 
\begin{eqnarray*}
(\partial_t^2-P_h)u&=&(\partial_t^2-P_h)\chi_k\widetilde{v}\\
&=&\chi_k(\partial_t^2-P_h)\widetilde{v}-\tilde v \Delta_h\chi_k-2\sum_{j=1}^d M_j^-(D_{j,h}^+\chi_kD_{j,h}^+\widetilde{v}) \\
&=&-\widetilde{v}\Delta_h\chi_k-\sum_{j=1}^d(D_{j,h}^+\chi_kD_{j,h}^+\widetilde{v}+D_{j,h}^-\chi_kD_{j,h}^-\widetilde{v}) .
\end{eqnarray*}
We can now apply the Carleman estimate in Theorem~\ref{crc:Carleman:1} to $u$ in $Q_k=kL+Q_{6L}$. (thus $\qc=\qc_k=[0,3]\times Q_k$) with $T_*=3$: if
\begin{equation}\label{splusgrand:eq1}
s\geq s_0(1+\|V\|_\infty^{\frac{2}{3}})
\end{equation}
for $s_0$ sufficiently large, and using the triangular inequality together with the fact that $sh\lesssim 1$, we obtain
\begin{equation*}
      \begin{aligned}
        s^3\|e^{s\ffi}\chi_k  \widetilde{v}\|^2_{L^2(\qc_k)}&
        +s\|e^{s\ffi}\chi_k \partial_t \widetilde{v}\|^2_{L^2(\qc_k)} \\
        +s\|e^{s\ffi}\chi_k &D_h^+\widetilde{v}\|^2_{L^2(\qc_k)}
        -2s\|e^{s\ffi}\widetilde{v}D_h^+\chi_k \|^2_{L^2(\qc_k)}
        -2\|e^{s\ffi}D_h^+\widetilde{v}D_h^+\chi_k \|^2_{L^2(\qc_k)} \\
        +s\|e^{s\ffi}\chi_k &D_h^-\widetilde{v}\|^2_{L^2(\qc_k)}
        -2s\|e^{s\ffi}\widetilde{v}D_h^-\chi_k \|^2_{L^2(\qc_k)}
        -2\|e^{s\ffi}D_h^-\widetilde{v}D_h^-\chi_k \|^2_{L^2(\qc_k)} \\
        +s\|e^{s\ffi(0,\cdot)}\chi_k &\partial_t\widetilde{v}(0,\cdot)\|^2_{\ell^2(Q_k)}
        +se^{2s\ffi(3)}\|\chi_k \partial_t\widetilde{v}(3,\cdot)\|^2_{\ell^2(Q_k)}
        +s^3e^{2s\ffi(3)}\|\chi_k \widetilde{v}(3,\cdot)\|^2_{\ell^2(Q_k)} \\
        \lesssim&
        \|e^{s\ffi}D_h^+\chi_k D_h^+\widetilde{v}\|^2_{L^2(\qc_k)}
         +\|e^{s\ffi}D_h^-\chi_k D_h^-\widetilde{v}\|^2_{L^2(\qc_k)}
        +\|e^{s\ffi}\widetilde{v}\Delta_h\chi_k \|^2_{L^2(\qc_k)} \\
        &+se^{2s\ffi(3)}\|(\chi_k+hD_h^+\chi_k)D_h^+\widetilde{v}(3,\cdot)\|^2_{\ell^2(Q_k)} 
        +se^{2s\ffi(3)}\|\widetilde{v}(3,\cdot)D_h^+\chi_k \|^2_{\ell^2(Q_k)} \\
        &+se^{2s\ffi(3)}\|(\chi_k-hD_h^-\chi_k)D_h^-\widetilde{v}(3,\cdot)\|^2_{\ell^2(Q_k)} 
        +se^{2s\ffi(3)}\|\widetilde{v}(3,\cdot)D_h^-\chi_k \|^2_{\ell^2(Q_k)} \\
        &+s\|e^{s\ffi(0,\cdot)}\chi_k\partial_t \widetilde{v}(0,\cdot)\|^2_{\ell^2(\omega_k)}.
    \end{aligned}
    \end{equation*}
Now, we remove the positive terms
$$
s\|e^{s\ffi}\chi_k \partial_t \widetilde{v}\|^2_{L^2(\qc_k)},\ 
s\|e^{s\ffi(0,\cdot)}\chi_k \partial_t\widetilde{v}(0,\cdot)\|^2_{\ell^2(Q_k)}
\mbox{ and }
se^{2s\ffi(3)}\|\chi_k \partial_t\widetilde{v}(3,\cdot)\|^2_{\ell^2(Q_k)}
$$
from the left and rewrite the remaining ones as
\begin{equation}\label{a-11}
      \begin{aligned}
        s^3\|e^{s\ffi}\chi_k  &\widetilde{v}\|^2_{L^2(\qc_k)}
        - 2s\|e^{s\ffi}\widetilde{v}D_h^+\chi_k \|^2_{L^2(\qc_k)}
        - 2s\|e^{s\ffi}\widetilde{v}D_h^-\chi_k \|^2_{L^2(\qc_k)}
        -\|e^{s\ffi}\widetilde{v}\Delta_h\chi_k \|^2_{L^2(\qc_k)}\\
        &+s\|e^{s\ffi}\chi_k D_h^+\widetilde{v}\|^2_{L^2(\qc_k)}
        -3\|e^{s\ffi}D_h^+\chi_k D_h^+\widetilde{v}\|^2_{L^2(\qc_k)} \\
        &+s\|e^{s\ffi}\chi_k D_h^-\widetilde{v}\|^2_{L^2(\qc_k)}
        -3\|e^{s\ffi}D_h^-\chi_k D_h^-\widetilde{v}\|^2_{L^2(\qc_k)} \\
        &+s^3e^{2s\ffi(3)}\|\chi_k \widetilde{v}(3,\cdot)\|^2_{\ell^2(Q_k)}
        -se^{2s\ffi(3)}\|\widetilde{v}(3,\cdot)D_h^+\chi_k \|^2_{\ell^2(Q_k)}
        -se^{2s\ffi(3)}\|\widetilde{v}(3,\cdot)D_h^-\chi_k \|^2_{\ell^2(Q_k)}\\       
        \lesssim& +se^{2s\ffi(3)}\|(\chi_k+hD_h^+\chi_k)D_h^+\widetilde{v}(3,\cdot)\|^2_{\ell^2(Q_k)} 
        +se^{2s\ffi(3)}\|(\chi_k-hD_h^-\chi_k)D_h^-\widetilde{v}(3,\cdot)\|^2_{\ell^2(Q_k)} \\
        &+s\|\chi_ke^{s\ffi(0,\cdot)}\partial_t \widetilde{v}(0,\cdot)\|^2_{\ell^2(\omega_k)}\\
        \lesssim& se^{2s\ffi(3)}\|D_h^+\widetilde{v}(3,\cdot)\|^2_{\ell^2(Q_k)}
        + se^{2s\ffi(3)}\|D_h^-\widetilde{v}(3,\cdot)\|^2_{\ell^2(Q_k)}
        +s\|e^{s\ffi(0,\cdot)}\partial_t \widetilde{v}(0,\cdot)\|^2_{\ell^2(\omega_k)}
    \end{aligned}
    \end{equation}
    since $\chi_k\leq1$.
Replacing $s_0$ by a larger constant in \eqref{splusgrand:eq1} allows us to absorb all the negative terms into the positive terms in the same line, leading us to
\begin{equation*}
      \begin{aligned}
        s^3\|e^{s\ffi}\chi_k  \widetilde{v}\|^2_{L^2(\qc_k)}&+s\|e^{s\ffi}\chi_k D_h^+\widetilde{v}\|^2_{L^2(\qc_k)}
        +s\|e^{s\ffi}\chi_k D_h^-\widetilde{v}\|^2_{L^2(\qc_k)}+s^3e^{2s\ffi(3)}\|\chi_k \widetilde{v}(3,\cdot)\|^2_{\ell^2(Q_k)}\\
        \lesssim&
        se^{2s\ffi(3)}\|D_h^+\widetilde{v}(3,\cdot)\|^2_{\ell^2(Q_k)}+se^{2s\ffi(3)}\|D_h^-\widetilde{v}(3,\cdot)\|^2_{\ell^2(Q_k)}
        +s\|e^{s\ffi(0,\cdot)}\partial_t \widetilde{v}(0,\cdot)\|^2_{\ell^2(\omega_k)}.
    \end{aligned}
    \end{equation*}
Finally, we keep only the last term on the left 
and shrink the norm from $\ell^2(Q_k)$ to $\ell^2(kL+Q_{L})$ and get 
 \begin{multline*}
s^3e^{2s\ffi(3)}\|\widetilde{v}(3,\cdot)\|^2_{\ell^2(kL+Q_{L})} \\ \lesssim 
se^{2s\ffi(3)}\|D_h^+\widetilde{v}(3,\cdot)\|^2_{\ell^2(Q_k)}+se^{2s\ffi(3)}\|D_h^-\widetilde{v}(3,\cdot)\|^2_{\ell^2(Q_k)}+s\|e^{s\ffi(0,\cdot)}\partial_t \widetilde{v}(0,\cdot)\|^2_{\ell^2(\omega_k)}.
\end{multline*}
We now sum over $k$ and use that the cubes $Q_k$ have only finite overlap to obtain
\begin{multline*}
s^3e^{2s\ffi(3)}\|\widetilde{v}(3,\cdot)\|^2_{\ell^2(h\Z^d)}  \\
    \lesssim se^{2s\ffi(3)}\|D_h^+\widetilde{v}(3,\cdot)\|^2_{\ell^2(h\Z^d)}+se^{2s\ffi(3)}\|D_h^-\widetilde{v}(3,\cdot)\|^2_{\ell^2(h\Z^d)}+s\|e^{s\ffi(0,\cdot)}\partial_t \widetilde{v}(0,\cdot)\|^2_{\ell^2(\omega)}.
\end{multline*}
Note that on $h\Z^d$ we have that $\|D_h^+\widetilde{v}(3,\cdot)\|^2_{\ell^2(h\Z^d)}=\|D_h^-\widetilde{v}(3,\cdot)\|^2_{\ell^2(h\Z^d)}$, then
\begin{equation}
s^3e^{2s\ffi(3)}\|\widetilde{v}(3,\cdot)\|^2_{\ell^2(h\Z^d)} 
    \lesssim se^{2s\ffi(3)}\|D_h^+\widetilde{v}(3,\cdot)\|^2_{\ell^2(h\Z^d)}+s\|e^{s\ffi(0,\cdot)}\partial_t \widetilde{v}(0,\cdot)\|^2_{\ell^2(\omega)}.
\label{a-9}
\end{equation}
The first term in the right-hand side of~\eqref{a-9} is bounded from above:
\begin{equation*}
    \begin{aligned}
       se^{2s\ffi(3)} \|D_h^+\widetilde{v}(3,\cdot)\|^2_{\ell^2(h\Z^d)}&= - se^{2s\ffi(3)}\langle \Delta_h \widetilde{v}(3,\cdot), \widetilde{v}(3,\cdot)  \rangle_{\ell^2(h\Z^d)}\\
        &=se^{2s\ffi(3)} \langle \partial_t^2 \widetilde{v}(3,\cdot),\widetilde{v}(3,\cdot) \rangle_{\ell^2(h\Z^d)}-se^{2s\ffi(3)}\langle V \widetilde{v}(3,\cdot),\widetilde{v}(3,\cdot) \rangle_{\ell^2(h\Z^d)} \\
        &\lesssim se^{2s\ffi(3)}(\mu+\|V\|_\infty) \|\widetilde{v}(3,\cdot)\|^2_{\ell^2(h\Z^d)}
    \end{aligned}
\end{equation*}
from the definition of $\widetilde{v}(3,\cdot)$.
Taking this estimate into \eqref{a-9}, and observing that $\partial_t \widetilde{v}(0,\cdot)=v$, we obtain
\begin{equation}
s^3e^{2s\ffi(3)}\|\widetilde{v}(3,\cdot)\|^2_{\ell^2(h\Z^d)}\lesssim s (\mu+\|V\|_\infty) e^{2s\ffi(3)}\|\widetilde{v}(3,\cdot)\|^2_{\ell^2(h\Z^d)}+ s  \| e^{s\ffi(0,\cdot)} v\|^2_{\ell^2(\omega)}.\label{a-10}
\end{equation}
Taking 
\begin{equation}\label{s2plusgrand:eq1}
s\geq s_0(\mu+1+\|V\|_\infty)^{\frac{1}{2}}
\end{equation}
after eventually increasing $s_0$,
we absorb the first term on the right into the left.
Finally as 
$$
\|\widetilde{v}(3,\cdot)\|^2_{\ell^2(h\Z^d)}\gtrsim\|v\|^2_{\ell^2(h\Z^d)},
$$
we obtain
\begin{equation*}
s^3e^{2s\ffi(3)}\|v\|^2_{\ell^2(h\Z^d)}\lesssim s  \| e^{s\ffi(0,\cdot)} v\|^2_{\ell^2(\omega)}.
\end{equation*}
As $s^3e^{2s\ffi(3)}\geq s$ and $e^{s\ffi(0,\cdot)}\lesssim e^s$, we conclude that
$\|v\|^2_{\ell^2(h\Z^d)}\lesssim e^s\|v\|^2_{\ell^2(\omega)}$. One can choose 
\begin{equation*}
    s=2s_0\bigl(\mu+1+\|V\|_\infty^{\frac{4}{3}}\bigr)^{\frac{1}{2}},
\end{equation*}
then conditions \eqref{splusgrand:eq1} and \eqref{s2plusgrand:eq1} on $s$ are satisfied.

If $V\in C^1_b$, condition \eqref{splusgrand:eq1} is replaced by $s\gtrsim s_0 (1+\|V\|_{W^{1,\infty}})^{\frac{1}{2}}$ and we can simply choose $s= s_0(\mu+1+\|V\|_{W^{1,\infty}})^{\frac{1}{2}}$, giving the second spectral inequality.
\end{proof}

\subsection{Proof of spectral inequalities for power growth potentials}
In this section, we give the proof of spectral inequalities for potentials satisfying Assumption~\ref{assump}
that in particular contains the case $V(x)=|x|^\beta$ stated in Theorem \ref{th:main:1} in the introduction,
which corresponds to $\beta_1=\beta_2=\beta$.

\begin{theorem}
Let $\omega\subset\R^d$ be equidistributed. Let $V$ be a potential satisfying Assumption~\ref{assump}
and $P_h=-\Delta_h+V$ with associated spectral projector $\Pi_{\mu,h}$.
Then there exists a constant $C>0$, $\varepsilon_0>0$ and $h_0>0$ depending only on $d,\omega,c_1,c_2,\beta_1,\beta_2$ 
in Assumption~\ref{assump} such that, for any $h<h_0$ and $0<\mu<\varepsilon_0 / h^{\frac{2\beta_1}{\beta_2}}$, we have
    \begin{equation}
    \label{eq:specwithpotA}
        \|\Pi_{\mu,h} u\|_{\ell^2(h\Z^d)}^2 \le Ce^{C\mu^{\frac{\beta_2}{2\beta_1}}}\|\Pi_{\mu,h} u\|^2_{\ell^2(\omega)},\quad \forall u\in \ell^2(h\Z^d).
    \end{equation}
\end{theorem}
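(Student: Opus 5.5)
The plan is to combine the localization of Proposition~\ref{prp-localization} with the local Carleman machinery already used in the bounded case. The point is that only the values of $V$ on a large cube $Q_\Lambda$, with $\Lambda\simeq\mu^{1/\beta_1}$, matter. There $V$ behaves like a bounded potential whose size is controlled by $\mu^{\beta_2/\beta_1}$.

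\textbf{Step 1: localization.} Fix $v\in\mathcal{E}_\mu(P_h)$. Since $V\ge c_1\scal{x}^{\beta_1}\ge c_1|x|^{\beta_1}$, Proposition~\ref{prp-localization} gives
$$
\|v\|^2_{\ell^2(h\Z^d)}\le 2\|v\|^2_{\ell^2(Q_\Lambda)}, \qquad \Lambda=L_\mu\simeq \mu^{1/\beta_1}.
$$
We may assume $\mu\ge1$, enlarging constants otherwise. Only the finitely many cells $kL+Q_L$ meeting $Q_\Lambda$ then need to be controlled.

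\textbf{Step 2: local Carleman estimates.} As in the proof of Theorem~\ref{th:main:1}, set
$$
\widetilde v(t,x)=\int_0^\mu \frac{\sinh(\sqrt\lambda t)}{\sqrt\lambda}\d m_\lambda v .
$$
For each $k$ with $kL+Q_L\cap Q_\Lambda\ne\emptyset$, apply Theorem~\ref{crc:Carleman:1} to $\chi_k\widetilde v$ on $Q_k=kL+Q_{6L}$. Theorem~\ref{crc:Carleman:1} only involves the norms of $V_1$ and $V_2$ on $Q_k$. By Assumption~\ref{assump}, on $Q_k\subset Q_{2\Lambda}$ we have
$$
\|V_1\|_{W^{1,\infty}(Q_k)}^{1/2}+\|V_2\|^{2/3}_{L^\infty(Q_k)}\lesssim \Lambda^{\beta_2/2}\simeq\mu^{\beta_2/(2\beta_1)} .
$$
Hence every local estimate holds with the single choice $s=s_0\,\mu^{\beta_2/(2\beta_1)}$. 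This $s$ also dominates $\mu^{1/2}$, since $\beta_2\ge\beta_1$, which is what the later absorption step needs. The requirement $sh\le\eps_0$ becomes $\mu^{\beta_2/(2\beta_1)}h\lesssim1$, i.e.\ $\mu\lesssim h^{-2\beta_1/\beta_2}$, which is exactly the hypothesis. Absorbing the cutoff commutator terms as before, one obtains for each such $k$
$$
s^3e^{2s\ffi(3)}\|\widetilde v(3)\|^2_{\ell^2(kL+Q_L)}\lesssim se^{2s\ffi(3)}\|D_h^\pm\widetilde v(3)\|^2_{\ell^2(Q_k)}+s\|e^{s\ffi(0)}v\|^2_{\ell^2(\omega_k)} .
$$

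\textbf{Step 3: summation and absorption (main obstacle).} Summing over the relevant $k$ gives $\|\widetilde v(3)\|^2_{\ell^2(Q_\Lambda)}$ on the left, while the right side contains $\|D_h^\pm\widetilde v(3)\|^2$ on a slightly larger region, essentially $Q_{\Lambda+6L}$. In the bounded case this was closed globally by $\|D_h^+\widetilde v\|^2\le(\mu+\|V\|_\infty)\|\widetilde v\|^2$. Here I would first bound it globally, which is legitimate because $V\ge0$:
$$
\|D_h^+\widetilde v(3)\|^2_{\ell^2(h\Z^d)}\le\scal{P_h\widetilde v(3),\widetilde v(3)}\le\mu\|\widetilde v(3)\|^2_{\ell^2(h\Z^d)} .
$$
It then remains to pass from $\|\widetilde v(3)\|_{\ell^2(h\Z^d)}$ back to $Q_\Lambda$. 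For this I would note that $\widetilde v(3)$ also lies in $\mathcal{E}_\mu(P_h)$, so Proposition~\ref{prp-localization} applies to it too: $\|\widetilde v(3)\|^2_{\ell^2(h\Z^d)}\le2\|\widetilde v(3)\|^2_{\ell^2(Q_\Lambda)}$. Since $s^2\gg\mu$, the gradient term can then be absorbed into the left. If needed, the Caccioppoli inequality (Proposition~\ref{prp-local-Cacci}) is a fallback for controlling the gradient locally instead.

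\textbf{Conclusion.} Absorption yields
$$
s^3e^{2s\ffi(3)}\|\widetilde v(3)\|^2\lesssim s\,e^{Cs}\|v\|^2_{\ell^2(\omega)} .
$$
Combining this with $\|\widetilde v(3)\|\gtrsim\|v\|$ (since $\sinh(3\sqrt\lambda)/\sqrt\lambda\ge3$) gives \eqref{eq:specwithpotA} with exponent $C\mu^{\beta_2/(2\beta_1)}$.
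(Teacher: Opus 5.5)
Your proposal is correct and follows essentially the same route as the paper. Both arguments localize via Proposition~\ref{prp-localization}, apply the Carleman estimate on the cubes meeting $Q_{L_\mu}$ with $s\simeq\mu^{\beta_2/(2\beta_1)}$ as dictated by Assumption~\ref{assump}, sum, bound the gradient globally by $\mu\|\widetilde v(3)\|^2$ using $V\ge 0$, re-localize $\widetilde v(3)\in\mathcal{E}_\mu(P_h)$, and absorb using $s^2\gtrsim\mu$. Your explicit check that $sh\le\eps_0$ is exactly what yields the range $\mu\lesssim h^{-2\beta_1/\beta_2}$ is a useful detail that the paper leaves implicit.
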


\begin{proof}
    Recall that there exists an orthonormal sequence in $\ell^2(h\Z^d)$ of eigenfunctions $\lbrace\phi_{k}\rbrace_{k\in\N}$ and the corresponding sequence of eigenvalues $\lbrace \lambda_{k}\rbrace_{k\in\N}$ such that 
\begin{equation*}
    (-\Delta_h+V) \phi_k=\lambda_k\phi_k,\quad \forall k\in\N.
\end{equation*}
Note also that $\lambda_k\geq 0$ since $V$ is non-negative.

    The first step is still to lift the dimension, while the representation is a little bit different. Let $v\in \mathcal{E}_\mu (P_h)$, written in the form 
    \begin{equation*}
        v=\sum_{\lambda_k\le \mu}c_k \phi_k, \quad c_k\in\C.
    \end{equation*}
    Define
    \begin{equation}
        \widetilde{v}(t,x):=\sum_{\lambda_k\le \mu} c_k\frac{\sinh (\sqrt{\lambda_k} t)}{\sqrt{\lambda_k}} \phi_k.\label{a-16}
    \end{equation}
    Then one can verify directly that
    \begin{equation*}
        (\partial_t^2-P_h)\widetilde{v}=0
    \end{equation*}
    and 
    \begin{equation}
        \partial_t \widetilde{v}|_{t=0} =v.\label{a-15}
    \end{equation}

As $\omega$ is $(L,\gamma)$-equidistributed, for every $k\in \Z^d$, $\omega\cap(Lk+[-L/2,L/2]^d)\supset Q_{\gamma L}(x_k)$.
Without loss of generality, we may shrink $\omega$ so that $\omega=\displaystyle\bigcup_{k\in\Z^d}\omega_k$, with 
$\omega_k= Q_{\gamma L}(x_k)$. We then cover $\R^d=\bigcup_{k\in\Z^d} Q_k$ with $Q_k:=Q_{6L}(x_k)$.

Thanks to Proposition~\ref{prp-localization}, we have
$$
\|\Pi_{\mu,h} u\|_{\ell^2(h\Z^d)}^2\lesssim \|\Pi_{\mu,h} u\|_{\ell^2(Q_{L_\mu})}^2\le N_d\sum_{k\in \mathcal{J}_\mu}\|\Pi_{\mu,h} u\|_{\ell^2(Q_k)}^2
$$
where $N_d$ is the finite overlap constant and
    \begin{equation*}
    L_\mu \simeq \mu^{\frac{1}{\beta_1}} \quad\mbox{and}\quad
        \mathcal{J}_\mu:=\lbrace k\in\Z^d: Q_{3L}(x_k)\cap Q_{L_\mu}\neq \varnothing \rbrace.
    \end{equation*}

We now fix  $k\in\mathcal{J}_\mu$ and define $u(t,x)=\chi(x-x_k)\widetilde{v}(t,x)$
    where $\chi$ is the smooth nonnegative function given in \eqref{chi:q} and apply the Carleman estimate in Theorem~\ref{crc:Carleman:1}. This requires
    \begin{equation*}
        s\ge s_0(\|V_1\|_{W^{1,\infty}(Q_{2L_\mu})}^{\frac{1}{2}}+\|V_2\|_{L^\infty(Q_{2L_\mu})}^{\frac{2}{3}}+1)
    \end{equation*}
    for some positive $s_0$. From Assumption~\ref{assump}, we get 
    $$
    \|V_1\|_{W^{1,\infty}(Q_{2L_\mu})}^{\frac{1}{2}}+\|V_2\|_{L^\infty(Q_{2L_\mu})}^{\frac{2}{3}}\lesssim L_\mu^{\frac{\beta_2}{2}}\lesssim \mu^{\frac{\beta_2}{2\beta_1}}.
    $$
We will thus require that
\begin{equation}
        s\ge s_0 (\mu+1)^{\frac{\beta_2}{2\beta_1}}\label{a-12}
    \end{equation}
    for some large enough $s_0:=s_0(c_1,c_2)$. Note that we can always enlarge $s_0$ to be able to apply the Carleman estimate. As in the previous section, after absorbing all the negative terms by choosing $s_0$ large enough, this gives 
\begin{align*}
s^3e^{2s\ffi(3)}\|\chi(\cdot-x_k) \widetilde{v}(3,\cdot)\|^2_{\ell^2(Q_{k})} 
\lesssim& se^{2s\ffi(3)}\|\chi(\cdot-x_k) D_h^+\widetilde{v}(3,\cdot)\|^2_{\ell^2(Q_{k})}
 +s\|e^{s\ffi(0,\cdot)}\partial_t \widetilde{v}(0,\cdot)\|^2_{\ell^2(\omega_k)} \\
&+se^{2s\ffi(3)}\|\chi(\cdot-x_k) D_h^-\widetilde{v}(3,\cdot)\|^2_{\ell^2(Q_{k})}
\end{align*}
with the condition \eqref{a-12} for $s_0$ sufficiently large. Summing them over $\mathcal{J}_\mu$, we obtain
\begin{equation}
\begin{aligned}
    &\underbrace{\sum_{k\in\mathcal{J}_\mu} s^3e^{2s\ffi(3)}\|\chi(\cdot-x_k) \widetilde{v}(3,\cdot)\|^2_{\ell^2\left(Q_{k}\right)}}_{\mathrm{LHS}} \\ 
    \lesssim &\underbrace{\sum_{k\in\mathcal{J}_\mu}\!se^{2s\ffi(3)}\|\chi(\cdot-x_k) D_h^+\widetilde{v}(3,\cdot)\|^2_{\ell^2\left(Q_{k}\right)}+\sum_{k\in\mathcal{J}_\mu}\!se^{2s\ffi(3)}\|\chi(\cdot-x_k) D_h^-\widetilde{v}(3,\cdot)\|^2_{\ell^2\left(Q_{k}\right)}}_{\mathrm{RHS}_1}  \\
    &+\sum_{k\in\mathcal{J}_\mu}\!s\|e^{s\ffi(0,\cdot)}\partial_t \widetilde{v}(0,\cdot)\|^2_{\ell^2(\omega_k)}.\label{a-13}
\end{aligned}
\end{equation}

One can bound the LHS from below
\begin{equation*}
    \mathrm{LHS}\gtrsim s^3 e^{2s\ffi(3)}\| \widetilde{v}(3,\cdot) \|^2_{\ell^2(Q_{L_\mu})}
    \gtrsim s^3 e^{2s\ffi(3)}\| \widetilde{v}(3,\cdot) \|^2_{\ell^2(h\Z^d)}
\end{equation*}
where we have used the localization property, i.e., Proposition \ref{prp-localization} by viewing $\widetilde{v}(3,\cdot)$ as a linear combination of eigenfunctions in the lattice. Then by the orthogonality of the normalized eigenfunctions, we obtain
\begin{equation}\label{lhs-unbounded}
    \mathrm{LHS}\gtrsim s^3 e^{2s\ffi(3)}\sum_{\lambda_k\le \mu} \frac{\sinh^2(3\sqrt{\lambda_k})}{\lambda_k} |c_k|^2.
\end{equation}
For the terms in the right-hand side of \eqref{a-13} involving discrete derivatives $\mathrm{RHS}_1$, since by \eqref{Summation:eq2} we have
\begin{align*}
    \| D_h^\pm \widetilde{v}(3,\cdot) \|^2_{\ell^2(h\Z^d)}
    &=\biggl\langle \sum_{\lambda_k\le \mu} c_k\frac{\sinh(3\sqrt{\lambda_k})}{\sqrt{\lambda_k}} D_h^\pm\phi_k,\sum_{\lambda_k\le \mu} c_k\frac{\sinh(3\sqrt{\lambda_k})}{\sqrt{\lambda_k}}  D_h^\pm\phi_k\biggr\rangle_{\ell^2(h\Z^d)}\\
    &=-2\biggl\langle \sum_{\lambda_k\le \mu} c_k\frac{\sinh(3\sqrt{\lambda_k})}{\sqrt{\lambda_k}} \Delta_h\phi_k,\sum_{\lambda_k\le \mu} c_k\frac{\sinh(3\sqrt{\lambda_k})}{\sqrt{\lambda_k}}  \phi_k\biggr\rangle_{\ell^2(h\Z^d)}\\
    &\le   2\biggl\langle \sum_{\lambda_k\le \mu} c_k\frac{\sinh(3\sqrt{\lambda_k})}{\sqrt{\lambda_k}} (-\Delta_h+V)\phi_k,\sum_{\lambda_k\le \mu} c_k\frac{\sinh(3\sqrt{\lambda_k})}{\sqrt{\lambda_k}}  \phi_k\biggr\rangle_{\ell^2(h\Z^d)}\\
    & = 2\sum_{\lambda_k\le \mu}\sinh^2(3\sqrt{\lambda_k}) |c_k|^2
    \leq2\mu \sum_{\lambda_k\le \mu}\frac{\sinh^2(3\sqrt{\lambda_k})}{{\lambda_k}} |c_k|^2
\end{align*}
where we used that $V\geq0$ and orthonormality of the $\phi_k$'s. Then
\begin{equation*}
    s^{-1}e^{-2s\ffi(3)}\,\mathrm{RHS}_1\lesssim \| D_h^+ \widetilde{v}(3,\cdot) \|^2_{\ell^2(h\Z^d)} + \| D_h^- \widetilde{v}(3,\cdot) \|^2_{\ell^2(h\Z^d)}
    \lesssim \mu \sum_{\lambda_k\le \mu}\frac{\sinh^2(3\sqrt{\lambda_k})}{{\lambda_k}} |c_k|^2.
\end{equation*}
Finally we obtain
\begin{equation*}
    s^3e^{2s\ffi(3)}\sum_{\lambda_k\le \mu}\frac{\sinh^2(3\sqrt{\lambda_k})}{\lambda_k}|c_k|^2
    \lesssim s e^{2s\ffi(3)}\mu\sum_{\lambda_k\le \mu}\frac{\sinh^2(3\sqrt{\lambda_k})}{\lambda_k}|c_k|^2
    + s  \| e^{s\ffi(0,\cdot)} v\|^2_{\ell^2(\omega)}.
\end{equation*}
Since $s$ satisfies \eqref{a-12} and $\dfrac{\beta_2}{2\beta_1}\ge \dfrac{1}{2}$
we have $s^2\ge s_0^2 \mu$
thus, taking $s_0$ large enough, the first term in the right-hand side can be absorbed in the left hand side.  This gives
$$
s^2e^{2s\ffi(3)}\sum_{\lambda_k\le \mu}\frac{\sinh^2(3\sqrt{\lambda_k})}{\lambda_k}|c_k|^2
\lesssim e^{\kappa s}\|v\|_{\ell^2(\omega)}^2
$$
with $\kappa=\sup \phi(0,\cdot)<+\infty$.
As $s^2e^{2s\ffi(3)}\gtrsim 1$ and $\dfrac{\sinh^2(3\sqrt{t})}{t}\gtrsim 1$, we get
\begin{equation*}
    \sum_{\lambda_k\le \mu}|c_k|^2
    \lesssim e^{\kappa s} \|v\|_{\ell^2(\omega)}^2
\end{equation*}
which is the claimed inequality. 
\end{proof}

\section{Controllability of the lower part of the spectrum}\label{sec:control}
In this section, we give the proofs of the controllability results that we deduce from spectral inequalities established in 
the previous section. This follows directly from \cite[\S 7]{boye2010disc} so that we will not reproduce the argument
when $V$ satisfies Assumption~\ref{assump}. However, when $V\in C_b(\R^d)$, it is possible to slightly improve the 
reasoning in order to keep track of the explicit dependence of the constants on the potential $V$. 
Further, the observability inequalities directly follow from the arguments given for instance in \cite{GL94,LT06}
so that we do not reproduce the argument here.

\subsection{Bounded non-negative potentials}

Define 
\begin{equation*}
    E_j:=\mathcal{E}_{2^{2j}}(P_h) \quad \text{and}\quad \Pi_{E_j}:= \Pi_{2^{2j},h}
\end{equation*}
where we used the notations given in \eqref{def-project} and \eqref{def-subspace}. We also 
define $J_h=\left\lfloor\dfrac{\log_2 (\eps_0 h^{-2})}{2}\right\rfloor$ where $\eps_0$ is given in Theorem \ref{th:main:1}.
In particular $2^{2J_h}\simeq h^{-2}$.
The first step consists in proving a final time observability inequality for low-frequency functions:

\begin{lemma}\label{adjoint-obs-system-lma}
    Let $V\in C_b(\R^d)$ or $V\in C_b^1(\R^d)$, $V\geq 0$, and $\omega$ be equidistributed. There exists $C,\kappa>0$ such that, for $j\le J_h$ and $T>0$, the semi-discrete solution $v\in C^\infty ([0,T], E_j)$ to the adjoint parabolic system
    \begin{equation}\label{adjoint-obs-system}
        \begin{cases}
            \partial_t v+\Delta_h v -Vv=0,& \text{in } (0,T)\times h\Z^d,\\
            v(T)=v_F\in  E_j,
        \end{cases}
    \end{equation}
    satisfies the observability estimate
    \begin{equation*}
        \|v(0)\|^2_{\ell^2(h\Z^d)}\le C_{\mathrm{obs}}(T)\int_0^T\|v(t)\|^2_{\ell^2(\omega)}\d t
    \end{equation*}
    where the constant $C_{\mathrm{obs}}(T)$ is given by
    \begin{equation}\label{form-obs-const}
        C_{\mathrm{obs}}(T)=\begin{cases}
            CT^{-1}e^{\kappa(1+\|V\|^{2/3}_{L^\infty})2^j},& \text{for } V\in C_b(\R^d),\\
            CT^{-1}e^{\kappa(1+\|V\|_{W^{1,\infty}}^{1/2})2^j},& \text{for } V\in C_b^1(\R^d).
        \end{cases}
    \end{equation}
\end{lemma}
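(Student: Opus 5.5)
The plan is to combine the spectral inequality of Theorem~\ref{th:main:1} with the monotonicity of the backward heat flow on $E_j$. Since $V\ge 0$, $P_h$ is non-negative and self-adjoint. The solution of \eqref{adjoint-obs-system} is therefore $v(t)=e^{-(T-t)P_h}v_F$. Because $P_h$ commutes with $\Pi_{2^{2j},h}$, we have $v(t)\in E_j$ for every $t\in[0,T]$, and $t\mapsto\|v(t)\|_{\ell^2(h\Z^d)}$ is non-decreasing in $t$. Indeed, by functional calculus
\[
\frac{\d}{\d t}\|v(t)\|^2=2\scal{P_hv(t),v(t)}\ge 0 .
\]
Consequently $\|v(0)\|^2\le \|v(t)\|^2$ for all $t\in[0,T]$, which will let us average over time.

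Next, check that the spectral inequality applies. For $j\le J_h$ we have $\mu=2^{2j}\le 2^{2J_h}\le \eps_0 h^{-2}$, which is exactly the range allowed in Theorem~\ref{th:main:1}. I also assume the standing smallness of $h$ required there, namely $h\le h_0(1+\|V\|_{L^\infty}^{2/3})^{-1}$, respectively $h\le h_0(1+\|V\|_{W^{1,\infty}}^{1/2})^{-1}$ in the $C^1_b$ case. With these conditions, for each $t$,
\[
\|v(t)\|^2_{\ell^2(h\Z^d)}\le C e^{\kappa\sqrt{1+\|V\|_{L^\infty}^{4/3}+2^{2j}}}\,\|v(t)\|^2_{\ell^2(\omega)} .
\]

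The exponent then has to be put in the product form \eqref{form-obs-const}. Using $\sqrt{a+b+c}\le\sqrt a+\sqrt b+\sqrt c$ and $2^j\ge 1$, we get
\[
\sqrt{1+\|V\|^{4/3}+2^{2j}}\le 1+\|V\|^{2/3}+2^j\le 3(1+\|V\|^{2/3})2^j .
\]
After enlarging $\kappa$, this gives the bound $Ce^{\kappa(1+\|V\|^{2/3}_{L^\infty})2^j}$. In the $C^1_b$ case one instead uses $\sqrt{1+\|V\|_{W^{1,\infty}}+2^{2j}}\le 3(1+\|V\|^{1/2}_{W^{1,\infty}})2^j$.

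Finally, combine the two steps. For every $t\in[0,T]$,
\[
\|v(0)\|^2\le\|v(t)\|^2\le Ce^{\kappa(1+\|V\|^{2/3})2^j}\|v(t)\|^2_{\ell^2(\omega)} .
\]
Integrating over $(0,T)$ and dividing by $T$ yields the claim with $C_{\mathrm{obs}}(T)=CT^{-1}e^{\kappa(1+\|V\|^{2/3})2^j}$. There is no real analytic obstacle here. The only point needing care is the bookkeeping of the admissible range: $j\le J_h$ guarantees $\mu\le\eps_0/h^2$, and the restriction on $h$ relative to $\|V\|$ must be carried over implicitly from Theorem~\ref{th:main:1}. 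The dependence of the constants on $V$ must also be kept in the product form $(1+\|V\|^{2/3})2^j$ rather than the sum, since that form is what feeds into the Lebeau--Robbiano iteration later.
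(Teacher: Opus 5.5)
Your proof is correct and follows the paper's argument exactly. You write $v(t)=e^{-(T-t)P_h}v_F\in E_j$ and use dissipation to get $T\|v(0)\|^2\le\int_0^T\|v(t)\|^2\,\mathrm{d}t$; you then apply the spectral inequality of Theorem~\ref{th:main:1} with $\mu=2^{2j}\le\varepsilon_0h^{-2}$ and rewrite the exponent in product form. Your bound $\sqrt{1+\|V\|^{4/3}+2^{2j}}\le 3(1+\|V\|^{2/3})2^j$, with the factor absorbed into $\kappa$, and your explicit note on the implicit smallness condition on $h$ are both slightly more careful than the paper, whose constant-free version of that inequality fails for instance when $V=0$ and $j=0$.
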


\begin{proof}
    Since $v_F\in E_j$, we have $v(t)\in E_j$ for any $t\in [0,T]$. Then we write
    \begin{equation*}
        v(t)=e^{-(T-t)(-\Delta_h+V)} v_F.
    \end{equation*}
    For the case $V\in C_b(\R^d)$, by parabolic dissipation and the partial spectral inequality \eqref{ineq-bounded-only} in Theorem \ref{th:main:1}, we have
    \begin{equation*}
        \begin{aligned}
            T\|v(0)\|^2_{\ell^2(h\Z^d)}&\le \int_0^T \|v(t)\|^2_{\ell^2(h\Z^d)} \d t\\
            &\le C e^{\kappa\sqrt{1+\|V\|^{4/3}_{L^\infty}+2^{2j}}}\int_0^T \|v(t)\|^2_{\ell^2(\omega)}\d t.
        \end{aligned}
    \end{equation*}
    Finally, we rewrite the result using that $\sqrt{1+\|V\|^{4/3}_{L^\infty}+2^{2j}}\leq(1+\|V\|^{2/3}_{L^\infty})2^j$.
    
    The case of $V\in C_b^1(\R^d)$ can be obtained in the same way by using \eqref{inequ-c1-bounded} instead.
\end{proof}

\begin{remark}
    All estimates given below will depend on the estimate of $C_{\mathrm{obs}}(T)$ given here.
    In particular, when $V\in C^1_b(\R^d)$, one can replace $\|V\|^{2/3}_{L^\infty}$ with 
    $\|V\|_{W^{1,\infty}}^{1/2}$. We will therefore focus on the case $V\in C_b(\R^d)$ for the remaining of the proof.
\end{remark}

Next we consider the following partial control problem
\begin{equation}\label{adjoint-partial-system}
    \begin{cases}
        \partial_t u-\Delta_h u+Vu=\Pi_{E_j}(\un_\omega f),& \text{in } (0,T)\times h\Z^d,\\
        u(0)=u_0\in E_j.
    \end{cases}
\end{equation}
From the previous partial observability result we obtain the following equivalent lemma (see, e.g., \cite[Proposition~7.7]{lerousseau2022elliptic} for their equivalence or \cite[Theorem 11.2.1]{tucsnak2009observation} for a more abstract version).

\begin{lemma}\label{control-lma}
Let $V\in C_b(\R^d)$ or $V\in C_b^1(\R^d)$, $V\geq0$, and $\omega$ be equidistributed. 
There exists $C,\kappa>0$ such that for $j\le J_h$ and $T>0$, there exists a control function $f\in L^2((0,T), \ell^2(h\Z^d))$ that brings the solution of \eqref{adjoint-partial-system} to zero at time $T$, and which satisfies
    \begin{equation*}
        \int_0^T\| f(t)\|^2_{\ell^2(h\Z^d)}\d t \le C_{\mathrm{obs}}(T) \|u_0\|^2_{\ell^2(h\Z^d)},
    \end{equation*}
    where the form of $C_{\mathrm{obs}}(T)$ is the one given in \eqref{form-obs-const}.
\end{lemma}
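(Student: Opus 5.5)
We obtain Lemma~\ref{control-lma} from Lemma~\ref{adjoint-obs-system-lma} by the standard Hilbert Uniqueness duality, carried out on the finite-energy Hilbert space $E_j$. It is convenient to regard \eqref{adjoint-partial-system} as a control system on $E_j$: its state space is $E_j$, which is invariant under the semigroup $e^{-tP_h}$ since $P_h$ commutes with $\Pi_{E_j}$. Its control operator is $B=\Pi_{E_j}\un_\omega:\ell^2(h\Z^d)\to E_j$. Because $P_h$ is self-adjoint and $\Pi_{E_j}$ is an orthogonal projection, the adjoint is $B^*=\un_\omega|_{E_j}$. The adjoint dynamics on $E_j$ is exactly \eqref{adjoint-obs-system}.

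\textbf{Step 1: the duality identity.} For $f\in L^2((0,T),\ell^2(h\Z^d))$, let $u$ solve \eqref{adjoint-partial-system}, and let $v$ solve \eqref{adjoint-obs-system} with $v_F\in E_j$. Differentiating $\scal{u(t),v(t)}$ and using that $-P_h$ and $\Pi_{E_j}$ are self-adjoint, we get
\[
\scal{u(T),v_F}-\scal{u_0,v(0)}=\int_0^T\scal{\un_\omega f(t),v(t)}\d t,
\]
where we also used $\Pi_{E_j}v=v$. Hence $u(T)=0$ if and only if $\int_0^T\scal{f,\un_\omega v}\d t=-\scal{u_0,v(0)}$ for all $v_F\in E_j$.

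\textbf{Step 2: the minimization.} Define the functional on $E_j$
\[
J(v_F)=\frac12\int_0^T\|v(t)\|^2_{\ell^2(\omega)}\d t+\scal{u_0,v(0)}.
\]
It is convex and continuous. It is also coercive: by Lemma~\ref{adjoint-obs-system-lma}, $|\scal{u_0,v(0)}|\le\|u_0\|\,C_{\mathrm{obs}}(T)^{1/2}\|\un_\omega v\|_{L^2(0,T;\ell^2)}$. The observability inequality also gives that $v_F\mapsto\|\un_\omega v\|_{L^2}$ is a norm on $E_j$, because backward uniqueness holds for the bounded-generator semigroup. Since $E_j$ may be infinite dimensional when $V$ is bounded, we work in the completion of $E_j$ for this norm, or equivalently apply Lax--Milgram or the Riesz representation theorem there. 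The functional $v_F\mapsto-\scal{u_0,v(0)}$ is bounded for that norm, with bound $C_{\mathrm{obs}}(T)^{1/2}\|u_0\|$. Riesz representation then yields some $w$ in the closure of $\{\un_\omega v\}$ in $L^2((0,T),\ell^2(h\Z^d))$ such that
\[
\int_0^T\scal{w,\un_\omega v}\d t=-\scal{u_0,v(0)}\quad\text{for all }v_F\in E_j,
\qquad
\|w\|^2_{L^2}\le C_{\mathrm{obs}}(T)\|u_0\|^2 .
\]

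\textbf{Step 3: conclusion.} Set $f=w$. By Step 1, the solution of \eqref{adjoint-partial-system} with this control satisfies $u(T)=0$, and the bound from Step 2 is the claimed cost estimate. The only point requiring care is the functional-analytic setting in Step 2, namely working in the completion rather than assuming $E_j$ is finite dimensional. Alternatively, this step is exactly the abstract equivalence \cite[Theorem 11.2.1]{tucsnak2009observation} between final-state observability of $(e^{-tP_h}|_{E_j},B^*)$ and null-controllability of $(e^{-tP_h}|_{E_j},B)$, with identical constants. I expect no substantial obstacle, since the constant is transferred verbatim.
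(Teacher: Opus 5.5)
Your proposal is correct and is essentially the paper's argument. The paper derives Lemma~\ref{control-lma} from Lemma~\ref{adjoint-obs-system-lma} by invoking the standard HUM equivalence between final-state observability and null-controllability with the same constant, and your duality identity followed by Riesz representation is exactly the content of that equivalence. (Incidentally, the completion is unnecessary: on $E_j$ one has $\|v_F\|\le e^{T2^{2j}}\|v(0)\|$, so $v_F\mapsto \un_\omega v$ is bounded below and its range is already closed.)
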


Now we are in the position to prove Theorem \ref{thm-uniform-control}.

\begin{proof}[Proof of Theorem \ref{thm-uniform-control}]
    Assume that $V\in C_b(\R^d)$. We present the iterative construction of the control function $f$ with the desired constants. We denote by $W_j(u_0,a,\tau)$ the control function given by Lemma \ref{control-lma} on the time interval $(a,a+\tau)$.

   Take $\rho\in(0,1)$ a parameter.  We split the interval $[0,T/2]=\bigcup_{j\in \N} [a_j,a_{j+1}]$, with $a_0=0$, $a_{j+1}=a_j+2T_j$, for any $j\in \N$ and $T_j=L2^{-j\rho}$ and the constant $L$ chosen so that 
    $$
    2\sum_{j=0}^\infty T_j=\frac{T}{2}.
    $$
    This gives the relation 
    \begin{equation}
        L=\frac{1-2^{-\rho}}{4}T\label{t-relation-rho}
    \end{equation}
    for a fixed $\rho \in (0,1)$.
    Further let $S(t)$ denote the semigroup $S(t):= e^{-t(-\Delta_h+V)}$. 
    In particular we have 
    \begin{equation}\label{semigroup:eq1}
    \|S(t)\|_{L^2(h\Z^d)\to L^2(h\Z^d)}\le 1.
    \end{equation}
    We define the control function $f$ for \eqref{eq:heat} as follows: for $0\le j\le J_h$,
    \begin{itemize}
        \item if $t\in (a_j,a_j+T_j]$, then $f(t,x)=W_j\bigl(\Pi_{E_j} u(a_j),a_j,T_j\bigr)$ and 
        \begin{equation*}
            u(t)=S(t-a_j)u(a_j)+\int_{a_j}^t S(t-s)\un_{\omega}f(s)\d s;
        \end{equation*}
    \item if $t \in (a_j+T_j, a_{j+1}]$, then $f(t,x)=0$ and $u(t)=S(t-a_j-T_j) u(a_j+T_j)$,
    \item if $t\in [a_{J_h+1},T]$, then $f(t)=0$.
    \end{itemize} 

First, by construction, $u(T)\in E_{J_h}^{\perp}$. As $2^{2J_h}\simeq h^{-2}$, 
$\Pi_{C_3/h^2,h}u(T)=0$ for some $C_3>0$.

Note that, from Lemma \ref{control-lma},
\begin{equation}
    \label{eq:estcontcostsi}
\|f\|^2_{L^2((a_j,a_j+T_j); \ell^2(h\Z^d))}
\leq \frac{C}{T_j}e^{\kappa (1+\|V\|_{\infty}^{2/3})2^j}\|u(a_j)\|_{\ell^2(h\Z^d)}^2
\end{equation}
In particular, for $j=0$, we have
\begin{equation}
    \|f\|^2_{L^2((0,T_0); \ell^2(h\Z^d))}\le \frac{4C}{(1-2^{-\rho})T} \|u_0\|_{\ell^2(h\Z^d)}^2.\label{eq:estcontcostsi2}
\end{equation}

Now, during the period $(a_j,a_j+T_j]$, this choice of the control function $f=W_j\bigl(\Pi_{E_j} u(a_j),a_j,T_j\bigr)$ and \eqref{semigroup:eq1} implies
    \begin{equation}
        \begin{aligned}
            \|u(a_{j}+T_j)\|_{\ell^2(h\Z^d)}
            &\le \|u(a_j)\|_{\ell^2(h\Z^d)}
            + \int_{a_j}^{a_j+T_j}\|f(s,\cdot)\|_{\ell^2(\omega)}\d s\\
            &\le \|u(a_j)\|_{\ell^2(h\Z^d)}
            +T_j^{\frac{1}{2}}\left(\int_{a_j}^{a_j+T_j}\|f(s,\cdot)\|^2_{\ell^2(h\Z^d)}\d s\right)^{\frac{1}{2}}\\
            &\le \biggl(1+Ce^{\kappa(1+\|V\|_{\infty}^{2/3})2^{j-1/2}}\biggr)\|u(a_j)\|_{\ell^2(h\Z^d)} 
            \label{eq:aaaaa1}
        \end{aligned}
    \end{equation}
    where we used the Cauchy--Schwarz inequality in the second identity and \eqref{eq:estcontcostsi} in the last one. Furthermore, we have $\Pi_{E_j}u(a_{j}+T_j)=0$ by the construction of our control function.
    During the period $(a_j+T_j,a_{j+1}]$, as $f=0$ on this interval, by parabolic dissipation, there is an exponential decrease of the $\ell^2$-norm, that is,
    \begin{equation}
    \label{eq:aaaaa2}
        \|u(a_{j+1})\|_{\ell^2(h\Z^d)} 
        \le e^{-2^{2j}T_j}\| u(a_j+T_j) \|_{\ell^2(h\Z^d)}
        \le e^{-L2^{j(2-\rho)}}\| u(a_j+T_j) \|_{\ell^2(h\Z^d)}
    \end{equation}
    where we used the fact that only the spectrum larger than $2^{2j}$ is left during the period $(a_j+T_j,a_{j+1}]$.

    For the same reason, as $2^{2J_h}\simeq h^{-2}$, there is a constant $C_2$ such that
\begin{equation}
    \label{eq:estuT}
\|u(T)\|_{\ell^2(h\Z^d)} \leq e^{-2C_2(T-a_{J_h+1})/h^2}\| u(a_{J_h+1}) \|_{\ell^2(h\Z^d)}
\leq e^{-C_2T/h^2}\| u(a_{J_h+1}) \|_{\ell^2(h\Z^d)}.
\end{equation}

Now, combining \eqref{eq:aaaaa1} and \eqref{eq:aaaaa2}, we obtain
\begin{equation*}
    \|u(a_{j+1})\|_{\ell^2(h\Z^d)}\leq
    Ce^{\kappa (1+\|V\|^{\frac{2}{3}}_{L^\infty})2^{j-1/2}-L2^{j(2-\rho)}}\|u(a_j)\|_{\ell^2(h\Z^d)}
\end{equation*}

Iterating this gives
\begin{align}
        \|u(a_{j+1})\|_{\ell^2(h\Z^d)}
        &\le C \prod_{k=0}^j \exp{\Bigl(\kappa(1+\|V\|^{\frac{2}{3}}_{L^\infty})2^{k-1/2}-L2^{k(2-\rho)}\Bigr)}
        \|u_0\|_{\ell^2(h\Z^d)} \notag\\
        &\le C \exp{\left(\kappa \left(1+\|V\|_{L^\infty}^{\frac{2}{3}}\right) 2^{j+1/2}
        -(2^{2-\rho}-1)^{-1}L2^{(j+1)(2-\rho)}\right)} \|u_0\|_{\ell^2(h\Z^d)}\notag\\
        &=C \exp{\left(\kappa \left(1+\|V\|_{L^\infty}^{\frac{2}{3}}\right) 2^{j+1/2}
        -c_\rho T2^{(j+1)(2-\rho)}\right)} \|u_0\|_{\ell^2(h\Z^d)}. \label{estimate-j-step} 
\end{align}
with
\begin{equation}
    c_\rho=\dfrac{1-2^{-\rho}}{4(2^{2-\rho}-1)}\label{eq:def-of-crho}
\end{equation}
in view of \eqref{t-relation-rho}.
Together with \eqref{eq:estcontcostsi}, this implies that
\begin{align}
\|f\|^2_{L^2((a_j,a_j+T_j); \ell^2(h\Z^d))}
&\leq \frac{C2^{j\rho}}{(1-2^{-\rho})T}\exp{\left(3\kappa \left(1+\|V\|_{L^\infty}^{\frac{2}{3}}\right) 2^{j}
        -c_\rho T2^{j(2-\rho)}\right)} \|u_0\|_{\ell^2(h\Z^d)}^2\notag\\
&\leq \frac{C}{(1-2^{-\rho})T}\exp{\left(6\kappa \left(1+\|V\|_{L^\infty}^{\frac{2}{3}}\right) 2^{j}
        -c_\rho T2^{j(2-\rho)}\right)} \|u_0\|_{\ell^2(h\Z^d)}^2
        \label{eq:estcontcostsifull}
\end{align}
where the factor $2^{j\rho}\leq 2^{2j}$ is absorbed in the exponential by replacing $\kappa$ with $2\kappa$.
Note also that $C,\kappa$ are independent of $\rho$.

We now split the analysis into two cases.

\smallskip

\noindent{\bf Case 1: large time, $T\ge K_\rho\bigl(1+\|V\|^{\frac{2}{3}}_{L^\infty}\bigr)$ with $K_\rho:=\dfrac{7\kappa}{c_\rho}$.}

\smallskip

In this case, the positive terms
in the exponentials in \eqref{estimate-j-step}-\eqref{eq:estcontcostsifull} can be absorbed in the negative ones.
In other words, \eqref{estimate-j-step} now reads 
\begin{equation*}
    \|u(a_j)\|_{\ell^2(h\Z^d)}\le 
    C e^{-6\kappa (1+\|V\|^{\frac{2}{3}}_{L^\infty})2^{2j(1-\rho/2)}} \|u_0\|_{\ell^2(h\Z^d)}
    \leq \|u_0\|_{\ell^2(h\Z^d)}\quad,\ 1\leq j\leq J_h.
\end{equation*}
In particular, \eqref{eq:estuT} implies that 
\begin{equation*}
    \|u(T)\|\leq  e^{-C T/h^{2}} \|u(a_{J_h+1})\|_{\ell^2(h\Z^d)}\le 
    C e^{-C T/h^{2}}\|u_0\|_{\ell^2(h\Z^d)}.
\end{equation*}
Further, from the condition on $T$ and $0<\rho<1$, \eqref{eq:estcontcostsifull} reads
\begin{equation}
\|f\|^2_{L^2((a_j,a_j+T_j); \ell^2(h\Z^d))}
\le \frac{2C}{(1-2^{-\rho})T}e^{-c_\rho T 2^{j}/7} \|u_0\|_{\ell^2(h\Z^d)}^2
\label{eq:estcontcostsifulllargeT}
\end{equation}
This together with \eqref{eq:estcontcostsi2} leads to the estimate of the norm of $f$:
\begin{equation*}
\begin{aligned}
    \|f\|^2_{L^2((0,T); \ell^2(h\Z^d))}
&=\sum_{j=0}^{J_h}\|f\|^2_{L^2((a_j,a_j+T_j); \ell^2(h\Z^d))}\\
    &\le \left(\frac{4C}{(1-2^{-\rho})T}+ \frac{C}{(1-2^{-\rho})T} \sum_{j=1}^{J_h}  e^{-c_\rho T2^j/7}\right)
    \|u_0\|^2_{\ell^2(h\Z^d)}\\
    &\le\left(\frac{4C}{(1-2^{-\rho})T}+  \frac{C}{1-2^{-\rho}} \frac{e^{-c_\rho T/14}}{T}\right)\|u_0\|^2_{\ell^2(h\Z^d)}\\
    &\le \frac{5C}{(1-2^{-\rho})T}
    \end{aligned}
\end{equation*}
where in the third line we used
$$
\sum_{j=0}^{+\infty} e^{-\alpha 2^j}\leq Ce^{-\alpha/2}.
$$
Then the theorem is established in this case.

\smallskip
\noindent{\bf Case 2: small time, $T\le \dfrac{7\kappa}{c_\rho}\bigl(1+\|V\|^{\frac{2}{3}}_{L^\infty}\bigr)$.}

\smallskip

In this case we cannot simply absorb the positive terms in the exponential in the negative ones as before.
To overcome this, we introduce $g(x)=6\kappa \left(1+\|V\|_{L^\infty}^{\frac{2}{3}}\right) x-c_\rho Tx^{2-\rho}$
on $(0,+\infty)$
and notice that $g$ vanishes only at
\begin{equation}
    x_1=\left(\dfrac{6\kappa \left(1+\|V\|_{L^\infty}^{\frac{2}{3}}\right)}{c_\rho T}\right)^{\frac{1}{1-\rho}}.
    \label{x1-rho}
\end{equation}
Let $j_*$ be the minimum value such that $2^{j_*}\ge x_1$, then
\begin{equation}
    j_*\simeq \log_2 \left(\dfrac{6\kappa \left(1+\|V\|_{L^\infty}^{\frac{2}{3}}\right)}{c_\rho T}\right)^{\frac{1}{1-\rho}}.\label{bound-j-starnew}
\end{equation}
As $J_h\simeq \frac{1}{2}\log_2 \dfrac{\eps_0}{h^2}$, if we assume
\begin{equation}
\label{eq:smallnessh}
    h\lesssim c_\rho^{\frac{1}{(1-\rho)}} \left(\dfrac{T}{ \left(1+\|V\|_{L^\infty}^{\frac{2}{3}}\right)}\right)^{\frac{1}{(1-\rho)}}
\end{equation}
then $j_*<J_h$.

Next, the derivative of $g$ is
\begin{equation*}
    g'(x)=6\kappa \left(1+\|V\|_{L^\infty}^{\frac{2}{3}}\right) -(2-\rho)c_\rho Tx^{1-\rho}.
\end{equation*}
Then 
\begin{equation*}
    x_2=\left( \frac{6\kappa \left(1+\|V\|_{L^\infty}^{\frac{2}{3}}\right)}{(2-\rho)c_\rho T }\right)^{\frac{1}{1-\rho}}.
\end{equation*}
is the unique positive root of $g'(x)=0$, so that $g$ is increasing from $0$ to $x_2$ and decreasing afterwards
{\it i.e.} $g$ has its only local maximum at $x_2$. Notice also that $x_2<x_1$ since $0<\rho<1$.

In particular,
$$
g(2^j)\leq g(x_2)=\frac{1-\rho}{(2-\rho)^{\frac{2-\rho}{1-\rho}}}
\frac{\left(6\kappa \left(1+\|V\|_{L^\infty}^{\frac{2}{3}}\right)\right)^{\frac{2-\rho}{1-\rho}}}
{(c_\rho T)^{\frac{1}{1-\rho}}}
\leq A_\rho\dfrac{\left(1+\|V\|_{L^\infty}^{\frac{2}{3}}\right)^{\frac{2-\rho}{1-\rho}}}{T^{\frac{1}{1-\rho}}}.
$$
We now chose $\rho$ so that
\begin{equation*}
    \varepsilon=\frac{\rho}{1-\rho}\quad\longleftrightarrow\quad \rho=\rho_\eps:=\dfrac{\eps}{1+\eps}.
\end{equation*}
In order to simplify notation, the constants depending on $\rho$ will be indexed
by $\eps$ instead of $\rho_\eps$. For instance, we write
\begin{equation*}
    c_\varepsilon:=c_{\rho_\eps}= \dfrac{2^{\frac{\varepsilon}{1+\varepsilon}}-1 }{2(4-2^{\frac{\varepsilon}{1+\varepsilon}})}\asymp \varepsilon\quad \text{as } \varepsilon\to 0
\end{equation*}
where $c_{\rho_\varepsilon}$ is defined in \eqref{eq:def-of-crho}.
Notice also that
\begin{equation*}
    K_\varepsilon:=K_{\rho_\varepsilon}=\dfrac{7\kappa}{c_\varepsilon}\asymp\varepsilon^{-1} \quad \text{as } \varepsilon\to 0
\end{equation*}
and that \eqref{x1-rho} write
\begin{equation}
    x_1=\left(\dfrac{6\kappa \left(1+\|V\|_{L^\infty}^{\frac{2}{3}}\right)}{c_\eps T}\right)^{1+\eps}.
    \label{x1-rho-new}
\end{equation}

Thus, the upper bound \eqref{eq:smallnessh} can be written as
\begin{equation*}
    h\lesssim c_\varepsilon^{1+\varepsilon} \left(\dfrac{T}{ \left(1+\|V\|_{L^\infty}^{\frac{2}{3}}\right)}\right)^{1+\varepsilon}.
\end{equation*}

Then we obtain
\begin{equation*}
    g(2^j)\le A_\varepsilon\dfrac{\left(1+\|V\|_{L^\infty}^{\frac{2}{3}}\right)^{2+\varepsilon}}{T^{1+\varepsilon}}
\end{equation*}
where
\begin{equation*}
    A_\varepsilon:=\frac{(1+\varepsilon)^{1+\varepsilon}}{(2+\varepsilon)^{2+\varepsilon}}\cdot\frac{1}{c_\varepsilon^{1+\varepsilon}}\asymp \varepsilon^{-1} \quad \text{as }\varepsilon\to 0. 
\end{equation*}
But then \eqref{eq:estcontcostsifull} implies
\begin{equation}
\label{eq:badestimatef}
\|f\|^2_{L^2((a_j,a_j+T_j); \ell^2(h\Z^d))}
\leq \frac{C_\varepsilon}{T}\exp\left(A_\varepsilon\dfrac{\left(1+\|V\|_{L^\infty}^{\frac{2}{3}}\right)^{2+\varepsilon}}{T^{1+\varepsilon}}\right) \|u_0\|_{\ell^2(h\Z^d)}^2.
\end{equation}

Let us now show that a better estimate is available as soon as $j>j_*$. Indeed, rewriting  \eqref{x1-rho} as $6\kappa \left(1+\|V\|_{L^\infty}^{\frac{2}{3}}\right)=c_\varepsilon Tx_1^{\frac{1}{\varepsilon+1}}$,
we get
$$
g(2^j)=6\kappa \left(1+\|V\|_{L^\infty}^{\frac{2}{3}}\right)2^j-c_\varepsilon T2^{j(1+\frac{1}{1+\varepsilon})}
=-c_\varepsilon T\bigl(2^{\frac{j}{1+\varepsilon}}-x_1^{\frac{1}{1+\varepsilon}}\bigr)2^j.
$$
But, for $j\geq j_*+1$, 
$
2^{\frac{j}{1+\varepsilon}}\geq 2^{\frac{j_*}{1+\varepsilon}}2^{\frac{1}{1+\varepsilon}}
\geq x_1^{\frac{1}{1+\varepsilon}}2^{\frac{1}{1+\varepsilon}}
$
by definition of $x_1$ and $j_*$, leading to
$$
g(2^j)\leq  -c_\varepsilon\bigl(2^{\frac{1}{1+\varepsilon}}-1\bigr)Tx_1^{\frac{1}{1+\varepsilon}}2^j
=-6\kappa \bigl(2^{\frac{1}{1+\varepsilon}}-1\bigr)\left(1+\|V\|_{L^\infty}^{\frac{2}{3}}\right)2^j:=B_\eps 2^j
$$
$$
g(2^j)\leq  -c_\varepsilon\bigl(2^{\frac{1}{1+\varepsilon}}-1\bigr)Tx_1^{\frac{1}{1+\varepsilon}}2^j
=-6\kappa \bigl(2^{\frac{1}{1+\varepsilon}}-1\bigr)\left(1+\|V\|_{L^\infty}^{\frac{2}{3}}\right)2^j
$$
with $B_\varepsilon\asymp 1$ when $\eps\to0$.
Hence from \eqref{bound-j-starnew}, for $j>j_*$ we have 
$$
g(2^j)\leq -c_\varepsilon T 2^{j_*}2^j\bigl(2^{\frac{1}{1+\varepsilon}}-1\bigr)
\lesssim -B_\varepsilon\left(\dfrac{1+\|V\|_\infty^{\frac{2}{3}}}{T}\right)^{1+\varepsilon} 2^j
$$
where
\begin{equation*}
    B_\varepsilon:=c_\eps^{-\varepsilon}\asymp 1 \quad \text{as }\varepsilon\to 0.
\end{equation*}
Then \eqref{eq:estcontcostsifull} implies
\begin{equation}
    \label{eq:betterestimatef}
\|f\|^2_{L^2((a_j,a_j+T_j); \ell^2(h\Z^d))}
\leq \frac{C}{(1-2^{-\frac{\varepsilon}{1+\varepsilon}})T}\exp\left(-B_\eps(1+\|V\|_\infty^{\frac{2}{3}}) 2^j\right)\|u_0\|_{\ell^2(h\Z^d)}^2.
\end{equation}
As $j_*<J_h$, we then write
$$
\|f\|^2_{L^2((0,T); \ell^2(h\Z^d))}=\sum_{j=0}^{j*}+\sum_{j=j_*+1}^{J_h}\|f\|^2_{L^2((a_j,a_j+T_j); \ell^2(h\Z^d))}
:=S_-+S_+.
$$
We first estimate $S_-$ we use the bound \eqref{eq:badestimatef}:
\begin{equation*}
\begin{aligned}
S_-&\le \frac{C}{(1-2^{-\frac{\varepsilon}{1+\varepsilon}})T} \sum_{j=0}^{j*} \exp\left(A_\varepsilon\dfrac{\left(1+\|V\|_{L^\infty}^{\frac{2}{3}}\right)^{2+\varepsilon}}{T^{1+\varepsilon}}\right)\|u_0\|^2_{\ell^2(h\Z^d)}\\
&\lesssim \dfrac{1}{(1-2^{-\frac{\varepsilon}{1+\varepsilon}})T}(j_*+1)\exp\left(A_\varepsilon\dfrac{\left(1+\|V\|_{L^\infty}^{\frac{2}{3}}\right)^{2+\varepsilon}}{T^{1+\varepsilon}}\right)\|u_0\|^2_{\ell^2(h\Z^d)}\\
&\lesssim \dfrac{1}{(1-2^{-\frac{\varepsilon}{1+\varepsilon}})T}\exp\left(A_\varepsilon\dfrac{\left(1+\|V\|_{L^\infty}^{\frac{2}{3}}\right)^{2+\varepsilon}}{T^{1+\varepsilon}}\right)\|u_0\|^2_{\ell^2(h\Z^d)}
    \end{aligned}
\end{equation*}
where we absorb $j_*+1$ in the exponential by increasing $K_\varepsilon$. This is possible in view of \eqref{bound-j-starnew}.

For $S_+$, we use the bound \eqref{eq:betterestimatef}:
\begin{equation*}
\begin{aligned} 
S_+    &\le \frac{C}{(1-2^{-\frac{\varepsilon}{1+\varepsilon}})T} \sum_{j=j_*+1}^{+\infty}\exp\left(-B_\varepsilon(1+\|V\|_\infty^{\frac{2}{3}}) 2^j\right)
    \|u_0\|^2_{\ell^2(h\Z^d)}\\
    &\lesssim\frac{1}{(1-2^{-\frac{\varepsilon}{1+\varepsilon}})T}\exp\left(-B_\varepsilon(1+\|V\|_\infty^{\frac{2}{3}}) 2^{j_*}\right) \|u_0\|^2_{\ell^2(h\Z^d)}\\
    &\lesssim 
    \frac{1}{(1-2^{-\frac{\varepsilon}{1+\varepsilon}})T}\exp\left(-B_\varepsilon(1+\|V\|_\infty^{\frac{2}{3}}) \right) \|u_0\|^2_{\ell^2(h\Z^d)}.
    \end{aligned}
\end{equation*}
This term is negligible compared to $S_-$ and is thus absorbed in it to give the final bound.

Finally, \eqref{estimate-j-step} implies that
$$
\|u(a_{J_h})\|_{\ell^2(h\Z^d)}\lesssim e^{g(2^{J_h})}\|u_0\|_{\ell^2(h\Z^d)}
\lesssim \|u_0\|_{\ell^2(h\Z^d)}
$$
since $J_h\geq j_*$ so that $g(2^{J_h})\leq 0$. From
\eqref{eq:estuT} we then obtain 
\begin{equation*}
    \|u(T)\|\leq  e^{-C T/h^{2}} \|u(a_{J_h})\|_{\ell^2(h\Z^d)}\lesssim
     e^{-C T/h^{2}}\|u_0\|_{\ell^2(h\Z^d)}
\end{equation*}
which is the claimed estimate.
\end{proof}

From Theorem \ref{thm-uniform-control}, we may improve Lemma \ref{adjoint-obs-system-lma}  by HUM directly as follows.

\begin{corollary}\label{inverse-obs-crc}
Let $V\in \cc_b(\mathbb R^d)$ and let $\omega$ be equidistributed.
Then for every $0<\eps\le 1$, there exists $K_\eps,C_\eps>0$ such that the semi-discrete solution $v$ of the adjoint system
\begin{equation*}
\begin{cases}
\partial_t v+\Delta_h v -Vv=0,& \text{in } (0,T)\times h\mathbb Z^d,\\
v(T)=v_F\in \Pi_{Ch^{-2},h}\bigl(\ell^2(h\mathbb Z^d)\bigr),
\end{cases}
\end{equation*}
satisfies the uniform observability estimate
\begin{equation*}
\|v(0)\|_{\ell^2(h\mathbb Z^d)}^2
\le K_T \int_0^T \|v(t)\|_{\ell^2(\omega)}^2\,\mathrm{d}t
\end{equation*}
where
\[
K_T=
\begin{cases}
\dfrac{C_\eps}{T} & \text{if } T>K_\eps(1+\|V\|_{L^\infty}^{2/3}),
\text{ and }h\le \frac{h_0}{1+\|V\|_{L^\infty}^{2/3}};\\[6pt]
\dfrac{C_\eps}{T}\exp \left(C_\eps\left(\frac{(1+\|V\|_{L^\infty}^{2/3})^2}{T}\right)^{1+\varepsilon}\right)
& \text{if } T\le K_\eps(1+\|V\|_{L^\infty}^{2/3}),
\text{ and } h\leq h_\varepsilon\left(\frac{T}{1+\|V\|_{L^\infty}^{2/3}}\right)^{1+\varepsilon}.
\end{cases}
\]

If $V\in \cc_b^1(\mathbb R^d)$, the same statement holds with $\|V\|_{L^\infty}^{2/3}$ replaced by $\|V\|_{W^{1,\infty}}^{1/2}$.
\end{corollary}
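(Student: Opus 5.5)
The plan is the standard duality (HUM) argument, run on the finite-frequency space $\mathcal{H}_h:=\Pi_{\mu,h}\bigl(\ell^2(h\mathbb Z^d)\bigr)$ with $\mu=Ch^{-2}$, where $C$ is the constant $C_3$ produced in the proof of Theorem~\ref{thm-uniform-control}. We use the control constructed there, with its cost bound, and test it against adjoint solutions.

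\textbf{Duality identity.} Let $v_F\in\mathcal{H}_h$ and let $v(t)=S(T-t)v_F$ solve the adjoint system. Since $P_h$ commutes with $\Pi_{\mu,h}$, $v(t)\in\mathcal{H}_h$ for all $t$. Take any $u_0\in\ell^2(h\mathbb Z^d)$ and let $f$ be the control of Theorem~\ref{thm-uniform-control}, with $u$ the solution of \eqref{eq:heat}. Differentiating $t\mapsto\langle u(t),v(t)\rangle$ and using that $P_h$ is self-adjoint, we get
\begin{equation*}
\langle u(T),v_F\rangle-\langle u_0,v(0)\rangle=\int_0^T\langle \mathbf 1_\omega f(t),v(t)\rangle\,\mathrm{d}t .
\end{equation*}
By construction $\Pi_{\mu,h}u(T)=0$, and $v_F=\Pi_{\mu,h}v_F$, so $\langle u(T),v_F\rangle=\langle \Pi_{\mu,h}u(T),v_F\rangle=0$. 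The exponentially small remainder $e^{-CT/h^2}$ therefore plays no role here, which is why the observability inequality has no error term on this subspace.

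\textbf{Cauchy--Schwarz.} Since $f$ acts only on $\omega$, the identity and Cauchy--Schwarz give
\begin{equation*}
|\langle u_0,v(0)\rangle|\le \|f\|_{L^2((0,T);\ell^2(\omega))}\Bigl(\int_0^T\|v(t)\|^2_{\ell^2(\omega)}\,\mathrm{d}t\Bigr)^{1/2}.
\end{equation*}
Now insert the cost bound $\|f\|_{L^2}^2\le \mathcal C(T)\|u_0\|^2$. Here $\mathcal C(T)\lesssim C_\eps/T$ in the large-time regime. In the small-time regime, $\mathcal C(T)$ is the exponential bound of part (2), with the $1/T$ prefactor tracked through $S_-$ and $S_+$. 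The admissible ranges of $h$ are exactly those of Theorem~\ref{thm-uniform-control}.

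Choosing $u_0=v(0)$ and dividing by $\|v(0)\|$ gives $\|v(0)\|^2\le \mathcal C(T)\int_0^T\|v\|^2_{\ell^2(\omega)}\,\mathrm{d}t$, which is the claimed estimate with $K_T=\mathcal C(T)$.

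\textbf{Main obstacle.} The only delicate point is matching the stated form of $K_T$ to the cost bound actually proved. In Case 2 that bound is $\exp\bigl(A_\eps(1+\|V\|_\infty^{2/3})^{2+\eps}/T^{1+\eps}\bigr)$. This is dominated by $\exp\bigl(C_\eps\bigl((1+\|V\|_\infty^{2/3})^2/T\bigr)^{1+\eps}\bigr)$, since $(1+\|V\|^{2/3})^{2+\eps}\le(1+\|V\|^{2/3})^{2+2\eps}$. The prefactor is $\frac{C}{(1-2^{-\rho_\eps})T}$, which is absorbed into $C_\eps/T$.

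I would also check that $\mu=Ch^{-2}$ is no larger than the threshold $2^{2J_h}$ used in the construction, so that $\Pi_{\mu,h}u(T)=0$ genuinely holds. If not, decrease $C$.
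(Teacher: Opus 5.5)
Your proposal is correct and follows the paper's own route: the paper states the corollary as a direct HUM consequence of Theorem~\ref{thm-uniform-control}. Your duality identity, the vanishing of $\langle u(T),v_F\rangle$ via $\Pi_{\mu,h}u(T)=0$, and the choice $u_0=v(0)$ spell out that argument, giving $K_T$ as the squared control cost. Your checks that $(1+\|V\|_{L^\infty}^{2/3})^{2+\eps}$ fits under the stated exponent and that $C$ must be taken small enough that $Ch^{-2}\le 2^{2J_h}$ are the right bookkeeping.
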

    
Finally, from this corollary we may prove the relaxed observability inequality.

\begin{proof}[Proof of Theorem \ref{th:obsineqintro}] Taking $\mu=C_3/h^2$ from Theorem \ref{thm-uniform-control} and writing
the orthogonal decomposition $v=v_1+v_2$ with $v_1=\Pi_{\mu,h}v$ we obtain
    \begin{equation*}
        \begin{aligned}
            \| v(0)\|_{\ell^2(h\Z^d)}^2\le 2\|v_1(0)\|_{\ell^2(h\Z^d)}^2+2\|v_2(0)\|_{\ell^2(h\Z^d)}^2.
        \end{aligned}
    \end{equation*}
    By parabolic dissipation, we have
    \begin{equation*}
        \|v_2(0)\|_{\ell^2(h\Z^d)}^2\le e^{-CT/h^2}\|v_2(T)\|_{\ell^2(h\Z^d)}^2\le e^{-CT/h^2} \|v_F\|_{\ell^2(h\Z^d)}^2.
    \end{equation*}
    Therefore
    \begin{equation*}
        \| v(0)\|_{\ell^2(h\Z^d)}^2\le \|v_1(0)\|_{\ell^2(h\Z^d)}^2+e^{-CT/h^2}\|v_F\|_{\ell^2(h\Z^d)}^2.
    \end{equation*}
    Applying Corollary \ref{inverse-obs-crc} from time $0$ to $T/2$, one may get
    \begin{equation*}
        \begin{aligned}
            \|v_1(0)\|_{\ell^2(h\Z^d)}^2&\le K_{T/2} \int_{0}^{T/2} \|v_1(t)\|_{\ell^2(\omega)}^2\d t\\
            &\le K_{T/2} \left(\int_{0}^{T/2} \|v(t)\|_{\ell^2(\omega)}^2\d t+\int_{0}^{T/2} \|v_2(t)\|_{\ell^2(\omega)}^2\d t\right) 
        \end{aligned}
    \end{equation*}
    By parabolic dissipation again, we have
    \begin{equation*}
        \int_0^{T/2}\|v_2(t)\|^2_{\ell^2(\omega)}\d t\le \frac{T}{2}\sup_{0\le t\le T/2}\|v_2(t/2)\|^2_{\ell^2(\omega)} \le \frac{T}{2}\sup_{0\le t\le T/2}\|v_2(t)\|^2_{\ell^2(h\Z^d)}\le \frac{T}{2} e^{-CT/(2h^2)}\|v_F\|^2_{\ell^2(h\Z^d)}.  
    \end{equation*}
    Grouping the above three inequalities, we obtain the desired result.
\end{proof}

\subsection{Sign changing potentials}

So far, we only considered non-negative potentials. One may easily
extend the results at the price of (strongly) increasing the
observability constant and requiring a stronger smallness condition on $h$.
Indeed, if $V$ is a real valued bounded potential, then $\tilde V=V+\|V_-\|_\infty$ is a positive potential.
Now assume that $v$ satisfies
\begin{equation*}
\begin{cases}
\partial_t v+\Delta_h v -Vv=0,& \text{in } (0,T)\times h\mathbb Z^d,\\
v(T)=v_F\in \ell^2(h\mathbb Z^d),
\end{cases}.
\end{equation*}
A simple computation then shows that
$\tilde v(t,x)=e^{t\|V_-\|_\infty }v(t,x)$ satisfies
\begin{equation*}\begin{cases}
\partial_t \tilde v+\Delta_h \tilde v -\tilde V\tilde v=0,& \text{in } (0,T)\times h\mathbb Z^d,\\
\tilde v(T)=e^{T\|V_-\|_\infty }v_F\in \ell^2(h\mathbb Z^d).
\end{cases}
\end{equation*}
Applying Theorem \ref{th:obsineqintro} to $\tilde v$, we obtain
the uniform observability estimate
\begin{eqnarray*}
\|v(0)\|_{\ell^2(h\mathbb Z^d)}^2=\|\tilde v(0)\|_{\ell^2(h\mathbb Z^d)}^2
&\le& K_T \int_0^T \|\tilde v(t)\|_{\ell^2(\omega)}^2\,\mathrm{d}t
+ C e^{-C/h^2}\|\tilde v_F\|_{\ell^2(h\mathbb Z^d)}^2\\
&\leq&K_Te^{2T\|V_-\|_\infty}\int_0^T \|v(t)\|_{\ell^2(\omega)}^2\,\mathrm{d}t
+ C e^{-C/h^2}e^{2T\|V_-\|_\infty }\|v_F\|_{\ell^2(h\mathbb Z^d)}^2.
\end{eqnarray*}

The term $e^{2T\|V_-\|_\infty }$ can be absorbed in $e^{-C/h^2}$ by asking $h\lesssim (T\|V_-\|_\infty)^{-1/2}$
For $T\lesssim (1+\|V\|_\infty)^{3/2}$, $e^{2T\|V_-\|_\infty}$ can be absorbed into $K_T$.
This is unfortunately not possible for $T$ large.

\section{On the necessary condition for bounded potentials}\label{sec:necessary}

\subsection{Some estimates on the heat kernel in $h\Z^d$}
The heat kernel $p_h(x,y,t)$ of the discrete Laplacian $\Delta_h$ is defined by $p_{d,h}(x,y,t)=u_y(x,t)$, where $u_y$ is the solution of 
\begin{equation*}
    \begin{cases}
        \partial_t u_y =\Delta_h u_y, &(t,x)\in (0,\infty)\times h\Z^d,\\
        u_y(0,x)=h^{-d}\delta_{y}. & 
    \end{cases}
\end{equation*}
Here the function $\delta_y$ is given by $\delta_y(y)=1$ and $\delta_y(x)=0$ for all $x\neq y$. Using the discrete Fourier transform, we obtain the explicit representation
\begin{eqnarray}
\label{eq:explicitheat}
    p_{d,h}(t,x,y)&=&\frac{1}{(2\pi)^d}\int_{Q_{2\pi h^{-1}}} e^{- 2th^{-2}\sum_{j=1}^d \left(1-\cos (\xi_j h)\right)}e^{i(x-y)\cdot \xi}\d \xi\notag\\
    &=&\frac{1}{(2h\pi)^d}\int_{Q_{2\pi}} e^{- 2th^{-2}\sum_{j=1}^d \left(1-\cos (\eta_j)\right)}
e^{i h^{-1}(x-y)\cdot \eta}\d\eta
\end{eqnarray}
where $Q_{a}=[-a/2,a/2]^d$. As a first consequence, $p_{d,h}$ is a function of $x-y$,
is the product of one-dimensional heat kernels and satisfies a scaling property
with respect to the mesh size $h$: for $t>0$ and $m,n\in\Z^d$,
\begin{equation}
    p_{d,h}(t,x,y)=h^{-d}\prod_{j=1}^d\mathfrak{p}_1\bigl(h^{-2}t,h^{-1}|x_j-y_j|\bigr).\label{scale}
\end{equation}
with
$$
\mathfrak{p}_1(\tau,u)= \frac{1}{2\pi }
\int_{-\pi}^\pi e^{- 2\tau \left(1-\cos (\eta)\right)}
e^{i u\eta}\,\mbox{d}\eta\qquad\tau>0,\ u\in\Z.
$$

We will need some simple estimates about $p_h$. We start with the $\ell^2(h\Z)$ norm:

\begin{lemma}\label{lem:ell2normheat}
    Let $0<T_-<T_+$. Then there exists $h_0=h_0(T_-,T_+)>0$
    and $\kappa=\kappa(d,T_-,T_+)>0$ such that, for $T_-\leq t\leq T_+$ and $0<h<h_0$, and every $y\in h\Z^d$,
    $$
    \dfrac{1}{\kappa}\leq \|h^{d/2}p_{d,h}(t,\cdot,y)\|_{\ell^2(h\Z^d)}\leq\kappa.
    $$
\end{lemma}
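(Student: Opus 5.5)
Since $p_{d,h}(t,x,y)$ depends only on $x-y$, we may take $y=0$. By Plancherel on $h\Z^d$, the $\ell^2$ norm can be computed exactly on the Fourier side. Indeed, from \eqref{eq:explicitheat}, $h^{d}p_{d,h}(t,hk,0)$ is the $k$-th Fourier coefficient of the function $\eta\mapsto e^{-2th^{-2}\sum_j(1-\cos\eta_j)}$ on the torus $Q_{2\pi}$, normalized by $(2\pi)^{-d}$. Parseval on $\Z^d$ then gives
\begin{equation*}
\sum_{k\in\Z^d}\bigl|h^dp_{d,h}(t,hk,0)\bigr|^2=\frac{1}{(2\pi)^d}\int_{Q_{2\pi}}e^{-4th^{-2}\sum_{j=1}^d(1-\cos\eta_j)}\d\eta .
\end{equation*}
By the product structure \eqref{scale}, this equals $I_1(4t/h^2)^d$, where
\begin{equation*}
I_1(\tau):=\frac{1}{2\pi}\int_{-\pi}^{\pi}e^{-\tau(1-\cos\eta)}\d\eta=e^{-\tau}I_0(\tau),
\end{equation*}
with $I_0$ the modified Bessel function. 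Hence
\begin{equation*}
\|h^{d/2}p_{d,h}(t,\cdot,0)\|^2_{\ell^2(h\Z^d)}=h^{-d}I_1(4t/h^2)^d .
\end{equation*}
The whole question thus reduces to the one-dimensional asymptotics of $I_1(\tau)$ as $\tau\to\infty$.

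The key estimate is a two-sided bound $c\,\tau^{-1/2}\le I_1(\tau)\le C\tau^{-1/2}$ for $\tau\ge1$. We prove it by Laplace's method, using $\frac{2}{\pi^2}\eta^2\le 1-\cos\eta\le \eta^2/2$ on $[-\pi,\pi]$.
\begin{itemize}
\item \emph{Upper bound.} The lower comparison $1-\cos\eta\ge\frac{2}{\pi^2}\eta^2$ gives $I_1(\tau)\le\frac{1}{2\pi}\int_{\R}e^{-2\tau\eta^2/\pi^2}\d\eta\lesssim\tau^{-1/2}$.
\item \emph{Lower bound.} The upper comparison $1-\cos\eta\le\eta^2/2$ gives $I_1(\tau)\ge\frac{1}{2\pi}\int_{|\eta|\le\tau^{-1/2}}e^{-\tau\eta^2/2}\d\eta\ge\frac{e^{-1/2}}{\pi}\tau^{-1/2}$.
\end{itemize}
Plugging in $\tau=4t/h^2$ gives
\begin{equation*}
h^{-d}I_1(4t/h^2)^d\asymp h^{-d}\,(h^2/t)^{d/2}=t^{-d/2},
\end{equation*}
with constants depending only on $d$, and this is valid as soon as $4t/h^2\ge1$. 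Choosing $h_0=2\sqrt{T_-}$ ensures this for all $T_-\le t\le T_+$. The two-sided bound $T_+^{-d/2}\lesssim t^{-d/2}\lesssim T_-^{-d/2}$ then yields $\kappa(d,T_-,T_+)$ after taking square roots.

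The only delicate point is the normalization. The statement's scaling ($h^{d/2}$ in front, no $h^d$ weight in the $\ell^2(h\Z^d)$ norm as the paper defines it) has to match the $h^{-d}$ factor in \eqref{scale}. One must verify that the powers of $h$ cancel exactly, leaving $t^{-d/4}$, as computed above. If the paper's intended norm were weighted differently, the same Fourier computation would still apply after adjusting the power of $h$.
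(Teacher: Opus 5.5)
Your proof is correct, and it shares its first half with the paper. In both arguments, Parseval on the torus together with the product structure turns $\|h^{d/2}p_{d,h}(t,\cdot,y)\|^2_{\ell^2(h\Z^d)}$ into $h^{-d}$ times the $d$-th power of $\frac{1}{2\pi}\int_{-\pi}^{\pi}e^{-4th^{-2}(1-\cos\eta)}\d\eta$. Your bookkeeping of the powers of $h$ is right, so the closing hedge about the normalization of the norm is unnecessary.

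The routes differ in how the one-dimensional integral is controlled. The paper uses Laplace's method: it writes $1-\cos s=\chi(s)^2$, changes variables to $\chi(s)$, and applies dominated convergence to get the exact rate $\tau^{1/2}$ times the integral tending to a constant. It then uses only that this asymptotic gives two-sided bounds beyond some non-explicit $\tau_0$, hence for $h$ below a non-explicit threshold depending on $T_-$. You instead use the sandwich $\frac{2}{\pi^2}\eta^2\le 1-\cos\eta\le \frac{\eta^2}{2}$ on $[-\pi,\pi]$. This gives non-asymptotic bounds $c\,\tau^{-1/2}\le I(\tau)\le C\,\tau^{-1/2}$ for every $\tau\ge 1$, with absolute constants.

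Your version is more elementary and avoids the change of variables and the limit argument; the paper's bookkeeping of constants there is loose, though harmlessly so. It also yields the explicit threshold $h_0=2\sqrt{T_-}$ and an explicit $\kappa$. You give up only the sharp leading constant, which the lemma never uses.

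One notational point: calling your integral $I_1$ right next to the Bessel function $I_0$ invites confusion with the Bessel $I_1$.
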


\begin{proof}
From \eqref{scale}, 
$$
\|p_{d,h}(t,\cdot,y)\|_{\ell^2(h\Z^d)}^2=h^{-2d}\|\mathfrak{p}_1(h^{-2}t,u)\|_{\ell^2(\Z)}^{2d}.
$$
Parseval then gives
$$
\|\mathfrak{p}_1(\tau,u)\|_{\ell^2(\Z)}^2=\frac{1}{2\pi}\int_{-\pi}^\pi
e^{- 4\tau \left(1-\cos (\eta)\right)}\,\mbox{d}\eta
$$
where $\tau=h^{-2}t$.
It is enough to prove that this last integral is $\sim\frac{1}{\sqrt{2\pi\tau}}$
when $\tau\to+\infty$. 

This can be obtained from the standard Laplace asymptotic. As we did not find an appropriate reference,
we now prove this in detail. Define $\ffi(s)=1-\cos(s)$ so that we want to estimate
$$
F(\tau)=\frac{1}{2\pi}\int_0^{\pi}e^{-4\tau\ffi(s)}\,\mbox{d}s
=\frac{1}{2\pi}\int_0^{\pi/2} e^{-4\tau \ffi(s)} \,\mbox{d}s +\frac{1}{2\pi}\int_{\pi/2}^\pi e^{-4\tau\ffi(s)}\,\mbox{d}s=F_0(\tau)+F_1(\tau).
$$

On $[\pi/2,\pi]$, $1-\cos(s)\geq 1$ so that
$$
F_1(\tau)\leq \frac{1}{2\pi}\int_{\pi/2}^\pi e^{-4\tau}\,\mbox{d}s=\dfrac{1}{4}e^{-4\tau}.
$$

To estimate $F_0$, note that $\ffi'(s)=\sin (s)$ vanishes at the end-point $s=0$.
Further $\ffi(s)\geq 0$ so we write
$\ffi(s)=\chi(s)^2$ so that $\chi$ is smooth over $]0,\pi/2]$ with
$$
\chi'(s)=\dfrac{\sin(s)}{2\sqrt{1-\cos (s)}}
\to \frac{1}{\sqrt{2}}
$$
when $s\to 0$. In particular, $\chi$ extends into
a smooth function at $0$ with $\chi'(0)=\frac{1}{\sqrt{2}}$. From the above, it is clear
that $\chi$ is a diffeomorphism $[0,\pi/2]\to[0,1]$. But then, changing variable 
$t=\chi(s)$ gives
$$
F_0(\tau)=\frac{1}{2\pi}\int_0^{\pi/2}e^{-4\tau\chi(s)^2}\,\mbox{d}s
=\frac{1}{2\pi}\int_0^1e^{-4\tau t^2}\,\frac{\mbox{d}t}{\chi'\bigl(\chi^{-1}(t)\bigr)}.
$$
Write $\psi(t)=\mathbf{1}_{[0,1]}\dfrac{1}{\chi'\bigl(\chi^{-1}(t)\bigr)}$ (which is bounded)
and change variable $u=2\sqrt{\tau} t$ to obtain
$$
F_0(\tau)=\frac{1}{4\pi \tau^{1/2}}\int_{\R}\psi\left(\frac{u}{\sqrt{2\tau}}\right)e^{-u^2}\,\mbox{d}u.
$$
From dominated convergence, when $\tau\to \infty$,
$$
F_0(\tau)\to \frac{\psi(0)}{4\pi \tau^{1/2}}\int_{\R}e^{-z^2}\,\mbox{d}z
=\frac{\sqrt{2\pi}}{4\pi \tau^{1/2}}
$$
as claimed.
\end{proof}

The second estimate we need is a pointwise bound.
As $\mathfrak{p}_1$ can be expressed in terms of Bessel functions such an estimate can be obtained from classical
estimates for those functions. A more direct approach was given by  Pang \cite{pang}. 
We will now simplify his estimates in order to obtain bounds that are easier to handle for our purpuse:

\begin{lemma}
For all $t>0$ and $x,y\in \Z^d$,
\begin{equation}
    \label{eq:upper-bound}  
 p_{d,h}(t,hx,hy) \lesssim t^{-d/2}\exp\left(-\sum_{j=1}^d \frac{1}{2}|x_j-y_j|\ln\left( 1+\frac{h^2|x_j-y_j|}{2t}\right)\right).   
\end{equation}
Further, for every $L>0$, $T_+>T_->0$ there exists $\mu$
such that, if $hx-hy\in [-L,L]^d$ and $T_-<t<T_+$, then 
\begin{equation}
    \label{eq:lower-bound}  
 p_{d,h}(t,hx,hy) \geq\mu \exp\left(-\sum_{j=1}^d \frac{1}{2}|x_j-y_j|\ln\left( 1+\frac{h^2|x_j-y_j|}{2t}\right)\right)
\end{equation}    
\end{lemma}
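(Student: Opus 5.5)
Since the kernel factorizes by \eqref{scale}, both bounds reduce to the one-dimensional kernel $\mathfrak{p}_1(\tau,u)$ with $\tau=h^{-2}t$ and $u=|x_j-y_j|\in\N$. The prefactor $h^{-d}$ combines with the $\tau^{-1/2}$ factors to give $h^{-d}\tau^{-d/2}=t^{-d/2}$, and the exponent $\frac12 u\ln(1+\frac{h^2u}{2t})$ equals $\frac12 u\ln(1+\frac{u}{2\tau})$. So it suffices to show
$$
\mathfrak{p}_1(\tau,u)\lesssim \tau^{-1/2}\exp\Bigl(-\tfrac12 u\ln\bigl(1+\tfrac{u}{2\tau}\bigr)\Bigr)
$$
for all $\tau>0$, $u\in\N$, and then to prove a matching lower bound in the regime $u\lesssim L/h$, $\tau\simeq h^{-2}$. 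We note that $\mathfrak{p}_1(\tau,u)=e^{-2\tau}I_u(2\tau)$, where $I_u$ is the modified Bessel function.

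\emph{Upper bound.} We shift the contour $\eta\mapsto\eta-i\theta$ with $\theta\ge0$; this is allowed by periodicity of the integrand. It gives
$$
\mathfrak{p}_1(\tau,u)=\frac{1}{2\pi}\int_{-\pi}^{\pi}e^{-2\tau+2\tau\cos(\eta-i\theta)+iu\eta+u\theta}\,\mathrm{d}\eta .
$$
Taking absolute values, and using $\Re\cos(\eta-i\theta)=\cos\eta\cosh\theta$, we get
$$
\mathfrak{p}_1\le e^{-2\tau+u\theta}\cdot\frac{1}{2\pi}\int_{-\pi}^{\pi} e^{2\tau\cosh\theta\cos\eta}\,\mathrm{d}\eta .
$$
The last integral equals $I_0(2\tau\cosh\theta)\lesssim e^{2\tau\cosh\theta}(1+\tau\cosh\theta)^{-1/2}$. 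This yields
$$
\mathfrak{p}_1\lesssim (1+\tau)^{-1/2}\exp\bigl(-2\tau(1-\cosh\theta)-u\theta\bigr)
$$
for $\theta\ge0$; the sign of $\theta$ is chosen so that the factor $e^{-u\theta}$ gives decay. Next we take $\theta=\operatorname{arcsinh}(u/(2\tau))$, which is the optimizer. The exponent becomes $-\bigl(u\,\operatorname{arcsinh}(\tfrac{u}{2\tau})-2\tau(\sqrt{1+(\tfrac{u}{2\tau})^2}-1)\bigr)$, which is Pang's function. We then simplify it with $a=u/(2\tau)$. The function $g(a)=a\,\operatorname{arcsinh}a-\sqrt{1+a^2}+1$ satisfies $g(a)\ge \frac a2\ln(1+a)$, since the two sides agree at $0$ and comparison of derivatives gives $\operatorname{arcsinh}a\ge \frac12\ln(1+a)+\frac{a}{2(1+a)}$. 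This holds for small $a$ by Taylor expansion and for large $a$ since $\operatorname{arcsinh}a\sim\ln(2a)$; in between it is an elementary check. Then $2\tau g(a)=u\,g(a)/a\ge \frac u2\ln(1+a)$, which is the claim. For $\tau\lesssim1$ the prefactor $(1+\tau)^{-1/2}$ is bounded, but the target prefactor $\tau^{-1/2}$ is then large, so the inequality remains true.

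\emph{Lower bound.} Here $\tau\in[T_-h^{-2},T_+h^{-2}]$ and $u\le L/h$, so $a=u/(2\tau)\le Lh/(2T_-)\to0$. The right-hand side is then $\exp(-O(u^2/\tau))=\exp(-O(L^2/T_-))$, which is bounded below. We therefore need $\mathfrak{p}_1(\tau,u)\gtrsim \tau^{-1/2}$ uniformly in this range, with the factor $h^{-d}\tau^{-d/2}=t^{-d/2}\ge T_+^{-d/2}$ absorbed into $\mu$. To get it, we repeat the Laplace-method argument of Lemma~\ref{lem:ell2normheat} on the exact integral $\frac1{2\pi}\int e^{-2\tau(1-\cos\eta)}\cos(u\eta)\,\mathrm{d}\eta$. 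After the substitution $\eta=v/\sqrt{\tau}$ this becomes
$$
\frac{1}{2\pi\sqrt\tau}\int e^{-v^2+O(v^4/\tau)}\cos\bigl(\tfrac{u}{\sqrt\tau}v\bigr)\,\mathrm{d}v .
$$
Here $u/\sqrt\tau\le L/\sqrt{T_-}$ is bounded, so by dominated convergence the integral converges to $\sqrt{\pi}e^{-u^2/(4\tau)}\ge \sqrt\pi e^{-L^2/(4T_-)}$, uniformly as $h\to0$. For $h\ge h_0$ only finitely many lattice configurations remain in a compact time range, and each has a strictly positive kernel since $I_u>0$, so continuity gives the bound there as well.

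The main obstacle is making this uniformity rigorous. In the upper bound, the issue is the elementary inequality $g(a)\ge\frac a2\ln(1+a)$. In the lower bound, the error in the Laplace asymptotics must be controlled uniformly in the oscillation parameter $u/\sqrt\tau$. For the latter I would first try an explicit remainder estimate: on $|\eta|\le\tau^{-1/3}$ use $|1-\cos\eta-\eta^2/2|\le\eta^4/24$, and bound the tail by $e^{-c\tau^{1/3}}$.
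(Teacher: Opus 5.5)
Your proof is correct, but it takes a different route from the paper. The paper proves $\tfrac12\ln(1+s)\le\zeta(s)\le\ln(1+s)$ for $\zeta(s)=\mathrm{arcsinh}(s)-(\sqrt{1+s^2}-1)/s$ and then quotes Pang's two-sided estimate $\mathfrak{p}_1(\tau,u)\asymp\min(|u|^{-1/2},\tau^{-1/2})e^{-|u|\zeta(|u|/2\tau)}$.

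\emph{Upper bound.} Your contour shift is a self-contained Chernoff-type proof of the upper half of Pang's bound. Since $2\tau g(a)=u\,\zeta(a)$, your optimized exponent is exactly Pang's, and your inequality $g(a)\ge\frac a2\ln(1+a)$ is the paper's $\zeta\ge\frac12\ln(1+s)$. Your only external input is the standard bound $I_0(z)\lesssim e^z(1+z)^{-1/2}$, which itself follows from $1-\cos\eta\ge 2\eta^2/\pi^2$ on $[-\pi,\pi]$. What you lose is the extra $|u|^{-1/2}$ in the prefactor when $u\gg\tau$, which the statement does not need.

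\emph{Lower bound.} You bypass Pang by noting that for $\tau\simeq h^{-2}$ and $u\le L/h$ the exponential weight lies between two positive constants. So only $\mathfrak{p}_1\gtrsim\tau^{-1/2}$ is needed, and this follows from the Laplace argument of Lemma~\ref{lem:ell2normheat}. This is more transparent than the paper's deduction. From $\zeta\le\ln(1+s)$, Pang's lower bound only yields $\exp\bigl(-u\ln(1+\frac{u}{2\tau})\bigr)$, without the factor $\frac12$, so the paper also tacitly uses that the exponent is bounded in this regime.

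The remaining loose ends are easy to close:
\begin{enumerate}
\item The shift should be $\eta\mapsto\eta+i\theta$; this is what produces the decaying factor $e^{-u\theta}$.
\item Your ``elementary check'' follows by differentiating once more. The needed inequality $\frac{1}{\sqrt{1+a^2}}\ge\frac{a+2}{2(1+a)^2}$ is, after squaring, $3a^4+12a^3+19a^2+12a\ge0$.
\item Uniformity in $b=u/\sqrt\tau$ is not an obstacle. Bound $|\cos(bv)|\le1$ inside the difference, so the error is at most the $L^1$ distance between $\mathbf{1}_{|v|\le\pi\sqrt\tau}\,e^{-2\tau(1-\cos(v/\sqrt\tau))}$ and $e^{-v^2}$, which is independent of $b$. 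That distance tends to $0$ by dominated convergence, again using $1-\cos\eta\ge2\eta^2/\pi^2$.
\item Your compactness argument for $h\ge h_0$ also needs $h$ bounded above. Since $0<\mathfrak{p}_1\le1$, we have $p_{d,h}\le h^{-d}$, so the lower bound is false as $h\to\infty$. The lemma must be read for bounded $h$ (say $h\le1$, as elsewhere in that section), and the paper's proof makes the same restriction implicitly.
\end{enumerate}
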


Note that the upper bound implies that $p_{h,d}(t,x,y)$ is bounded independently of $h$ and $t\geq 1$.

\begin{proof}
Let us first introduce
$$
\zeta(s)=\mathrm{arcsinh}(s)+\frac{1-\sqrt{s^2+1}}{s}=\ln(s+\sqrt{s^2+1})-\dfrac{s}{1+\sqrt{s^2+1}}
$$
and observe that 
$$
\dfrac{1}{2}\ln(1+s)\leq \zeta(s)\leq \ln(1+s).
$$
Indeed, consider $\zeta_+(s)=\ln(1+s)-\zeta(s)$ then $\zeta_+(0)=0$ and
$$
\zeta_+^\prime(s)=\frac{1}{s+1}-\frac{1}{\sqrt{s^2+1}+1}\geq 0
$$
showing that $\zeta_+\geq 0$. On the other hand, if we consider $\zeta_-(s)=\zeta(s)-\dfrac{1}{2}\ln(1+s)$,
then $\zeta_-(0)=0$ and
$$
\zeta_-^\prime(s)=\frac{s(2\sqrt{1+s^2}+2-s)}{2(1+s)(1+\sqrt{1+s^2})^2}\geq 0
$$
showing that $\zeta_-\geq 0$ as well.
Then \cite[Theorem 3.5]{pang} states that
$$
\min(|u|^{-1/2},\tau^{-1/2})\exp\left(-|u|\zeta\left(\frac{|u|}{2\tau}\right)\right)\lesssim
\mathfrak{p}_1(\tau,u)\lesssim \min(|u|^{-1/2},\tau^{-1/2})\exp\left(-|u|\zeta\left(\frac{|u|}{2\tau}\right)\right).
$$
Combining this with \eqref{scale}, we obtain the desired bounds.
\end{proof}

\begin{corollary}
\label{lem:remainderheat}
Let $L>1$ and $T_+>T_->0$. Then there exists $h_1=h_1(T_-,T_+,L)$, $\nu=\nu(T_+)>0$
and $\gamma=\gamma(T_-,T_+)$ such that for $T_-<t<T_+$, $0<h<h_1$ and any fixed $y\in h\Z^d$,
$$
\sum_{\substack{x\in h\Z^d\\ x-y\notin[-L,L]^d}}|h^{d/2}p_{d,h}(t,x,y)|^2\leq \gamma e^{-\nu L^2}.
$$
\end{corollary}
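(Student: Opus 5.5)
Since $p_{d,h}$ is a product of one-dimensional kernels by \eqref{scale}, the plan is to reduce the sum to one-dimensional tail sums and apply the pointwise upper bound \eqref{eq:upper-bound}. Writing $x-y=hm$ with $m\in\Z^d$, the condition $x-y\notin[-L,L]^d$ means that $|m_j|>L/h$ for at least one $j$. A union bound over $j$ gives
\[
\sum_{\substack{x\in h\Z^d\\ x-y\notin[-L,L]^d}}|h^{d/2}p_{d,h}(t,x,y)|^2
\le d\, h^{-d}\Bigl(\sum_{u\in\Z}\mathfrak{p}_1(h^{-2}t,u)^2\Bigr)^{d-1}\sum_{|u|>L/h}\mathfrak{p}_1(h^{-2}t,u)^2 .
\]
Parseval, as in the proof of Lemma~\ref{lem:ell2normheat}, shows that $\sum_u\mathfrak{p}_1(\tau,u)^2\simeq\tau^{-1/2}=h t^{-1/2}$ for $h$ small, with constants depending only on $T_\pm$. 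So the first $d-1$ factors contribute $h^{d-1}$ times a constant, and it remains to show
\[
h^{-1}\sum_{|u|>L/h}\mathfrak{p}_1(h^{-2}t,u)^2\lesssim e^{-\nu L^2}.
\]

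For the one-dimensional tail I would use the bound on $\mathfrak{p}_1$ quoted from \cite{pang}, together with $\zeta(s)\ge\frac12\ln(1+s)$:
\[
\mathfrak{p}_1(\tau,u)\lesssim \tau^{-1/2}\exp\Bigl(-\tfrac{|u|}{2}\ln\bigl(1+\tfrac{|u|}{2\tau}\bigr)\Bigr),\qquad \tau=t/h^2 .
\]
Set $a=h|u|$, which is the physical distance, so $a>L$. Then $|u|/(2\tau)=ha/(2t)$ and the exponent becomes $-\frac{a}{2h}\ln\bigl(1+\frac{ha}{2t}\bigr)$. I would split the tail into two ranges.

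\textbf{Range $ha\le 2t$.} Here $\ln(1+s)\ge s\ln 2$ for $s\in[0,1]$, so the exponent is at most $-c\,a^2/t\le -c\,a^2/T_+$. The squared sum is then
\[
h^{-1}\cdot\frac{h^2}{t}\sum_{a\in h\Z,\ a>L}e^{-2ca^2/T_+}\lesssim \frac{1}{T_-}\int_{L-h}^\infty e^{-2ca^2/T_+}\,\d a\lesssim e^{-\nu L^2},
\]
with $\nu\simeq c/T_+$. This uses $h<h_1$ so that the Riemann sum is controlled by the integral.

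\textbf{Range $ha>2t$, i.e.\ $|u|>2\tau$.} Here $\ln(1+s)\ge\ln 2$, so each term is at most $\exp(-|u|\ln 2)$ up to the $\tau^{-1/2}$ factor. The sum is then bounded by $e^{-c T_-/h^2}$, up to polynomial factors in $h$. Since $L$ is fixed, choosing $h_1=h_1(T_-,T_+,L)$ small enough (for instance $h_1\lesssim \sqrt{T_-}/L$) makes this at most $e^{-\nu L^2}$.

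Adding the two ranges and absorbing the $d$ and $T_\pm$ constants into $\gamma$ gives the claim. The main care points are keeping the $h$-powers consistent, with the $h^{-d}$ exactly compensated by the $\tau^{-1/2}$ factors, and handling the crossover regime $ha\sim t$. That crossover is where the dependence of $h_1$ on $L$ is needed.
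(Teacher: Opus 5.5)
Your proposal is correct and follows the paper's proof: the same reduction to $d=1$ by covering the complement of the cube with $d$ bands, using the product formula \eqref{scale} and the Parseval bound from Lemma~\ref{lem:ell2normheat}, then applying the pointwise bound \eqref{eq:upper-bound} to the one-dimensional tail. The only difference is how that tail is summed: instead of splitting into the regimes $ha\le 2t$ and $ha>2t$, the paper bounds $\ln\bigl(1+\frac{h^2|u|}{2T_+}\bigr)$ from below by its value $\ln\bigl(1+\frac{hL}{2T_+}\bigr)$ at $|u|=L/h$, sums the resulting geometric tail, and then uses $\ln(1+x)\ge x/2$ under the condition $h\lesssim T_+/L$ (this is where the paper's $L$-dependence of $h_1$ enters), which is a slightly shorter route to the same Gaussian bound $e^{-L^2/(4T_+)}$.
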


\begin{proof} 
We will prove a bit more precise result.

Write $y=(y',y_d)$ with $y\in h\Z^{d-1}$ and $y_d\in \Z$. We first use that $h\Z^d\setminus[-L,L]^d$ can be covered by the $d$ bands $h\Z^{d-1}\times(h\Z\setminus[-L,L])$,
$h\Z^{d-2}\times (h\Z\setminus[-L,L])\times h\Z$,..., $(h\Z\setminus[-L,L])\times h\Z^{d-1}$ 
and \eqref{scale} to reduce the problem to the case $d=1$ since
$$
\sum_{\substack{x\in h\Z^d\\ x-y\notin[-L,L]^d}}|h^{d/2}p_{d,h}(t,x,y)|^2\leq d
\|h^{(d-1)/2}p_{d-1,h}(t,\cdot,y')\|_{\ell^2(h\Z^{d-1})}^2
\left(\sum_{\substack{x\in h\Z\\ x-y_d\notin[-L,L]}}|h^{1/2}p_{1,h}(t,x,y_d)|^2\right).
$$
From Lemma \ref{lem:ell2normheat}, $\|h^{(d-1)/2}p_{d-1,h}(t,\cdot,y')\|_{\ell^2(h\Z^{d-1})}^2\lesssim 1$
so that the result follows directly from the case $d=1$.

Next, using the bound \eqref{eq:upper-bound} and setting $u=x-y$ we obtain
\begin{eqnarray*}
\sum_{\substack{x,y\in \Z\\ hx-hy\notin[-L,L]}}|h^{1/2}p_{1,h}(t,hx,hy)|^2&\lesssim&
\frac{1}{T_-}\sum_{u\in \Z\setminus[-L/h,L/h]}h\exp\left(-|u|\ln\left(1+\frac{h^2|u|}{2T_+}\right)\right)\\
&\lesssim&\int_{L/h}^{+\infty}h\exp\left(-u\ln\left(1+\frac{h^2u}{2T_+}\right)\right)\,\mbox{d}u\\
&\lesssim&\int_{L/h}^{+\infty}h\exp\left(-u\ln\left(1+\frac{hL}{2T_+}\right)\right)\,\mbox{d}u\\
&\lesssim&\frac{h}{\ln\left(1+\frac{hL}{2T_+}\right)}\exp\left(-\frac{L}{h}\ln\left(1+\frac{hL}{2T_+}\right)\right).
\end{eqnarray*}

Now set $a=\dfrac{L}{2T_+}$ and $\ffi(h)=\dfrac{h}{\ln(1+ah)}$ so that $\ffi'(h)=\dfrac{\psi(h)}{(1+ah)\ln^2(1+ah)}$
with $\psi(h)=(1+ah)\ln(1+ah)-ah$. Then $\psi(0)=0$ and $\psi'(h)=a\ln(1+ah)\geq 0$ thus $\psi\geq0$
thus $\ffi$ is increasing. In particular, as $h\leq 1$, $\ffi(h)\leq\dfrac{1}{\ln(1+a)}$.
Further, by concavity, $\ln(1+ah)\geq\dfrac{h}{\ln(1+a)}$ when $0\leq h\leq 1$ thus
$\exp\left(-\dfrac{L}{h}\ln(1+ah)\right)\leq \exp\left(-\dfrac{L}{\ln(1+a)}\right)$.

On the other hand, $\ln(1+ah)\geq \dfrac{ah}{2}$ if $ah<2.5$ (say). Grouping all estimates, we obtain
$$
\sum_{\substack{x,y\in \Z\\ hx-hy\notin[-L,L]}}|h^{1/2}p_{1,h}(t,hx,hy)|^2
\lesssim
\begin{cases}
\dfrac{1}{\ln(1+L/2T_+)}\exp\left(-\dfrac{L}{\ln(1+L/2T_+)}\right)&\mbox{for }h\leq 1,\\[12pt]
\dfrac{2T_+}{L}\exp\left(-\frac{L^2}{4T_+}\right)
&\mbox{if }h<\dfrac{4T_+}{L}.
\end{cases}
$$
This gives the desired bound.
\end{proof}

Now let $V\in \cc_b(\R^d)$ be a bounded continuous function on $\R^d$.
Denote by $p_{V,h}(x,y,t)$ the heat kernel of the Schrödinger operator $P_h=-\Delta_h+V$ with $V\in \cc_b(\R^d)$. By the Feynman--Kac formula (see, e.g., \cite[Sec.~2.5]{Keller2021graphs}), we have the estimate
$$
e^{-t\|V\|_{L^\infty}}p_h(t,x,y)\le p_{V,h}(t,x,y)\le e^{t\|V\|_{L^\infty}} p_h(t,x,y).
$$

\subsection{A necessary condition on the set $\omega$}
We are now in position to prove the main result of this section
which shows that the condition on $\omega$ in Theorem \ref{th:obsineqintro}
is almost optimal.

\begin{theorem}
    Let $\omega$ be an open set in $\R^d$ with smooth boundary, 
    and $V\in\cc_b(\R^d)$. Assume that there exists $T>0$, $h_0=h_0(\omega,V,T)$ and $C=C(\omega,V,T)$ such that, for every $h\leq h_0$ and every $u_F\in\ell^2(h\Z^d)$, the solution $u$ of
    \begin{equation}
    \label{eq:heatend}
    \begin{cases}
        \partial_t u =-\Delta_h u+Vu, &(t,x)\in (0,\infty)\times h\Z^d,\\
        u(T,x)=u_F(x) &  x\in h\Z^d
    \end{cases}
\end{equation}
satisfies the following observability inequality,
\begin{equation}
    \|u(0,\cdot)\|_{\ell^2(h\Z^d)}^2\le C \int_0^T \| u(t,\cdot)\|_{\ell^2(\omega)}^2 \d t + Ce^{-C/h^2} \|u(T)\|_{\ell^2(h\Z^d)}^2.
\end{equation}
Then $\omega$ is thick.
\end{theorem}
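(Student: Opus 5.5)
We argue by contradiction, testing the observability inequality against heat kernels concentrated in regions where $\omega$ is sparse. Suppose $\omega$ is not thick. Then for every $L>0$ and every $\gamma>0$ there is $x\in\R^d$ with $|\omega\cap(x+Q_L)|<\gamma|Q_L|$. Fix $L$ large, to be chosen at the end, and choose points $x_n$ with $|\omega\cap(x_n+Q_L)|\to 0$. By translating we may assume $x_n\in h\Z^d$. The smooth-boundary assumption gives $h^d|\omega\cap h\Z^d\cap(x_n+Q_L)|\to|\omega\cap(x_n+Q_L)|$ as $h\to0$. So for each $n$ we can pick $h=h_n\le h_0$ small enough that the discrete density $h^d|\omega\cap h\Z^d\cap(x_n+Q_L)|$ is at most $2|\omega\cap(x_n+Q_L)|$.

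\textbf{Test data.} Take the final datum $u_F=h^{d/2}\delta_{x_n}$, which has $\ell^2$ norm $h^{d/2}$. Writing $s=T-t$, the solution of \eqref{eq:heatend} is $u(t,x)=h^{d}\,h^{d/2}p_{V,h}(T-t,x,x_n)$. Divide the observability inequality by $h^d$ and work with $w(s,x)=h^{d/2}p_{V,h}(s,x,x_n)$.

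\textbf{Lower bound for the left side.} By the Feynman--Kac comparison, $p_{V,h}\ge e^{-T\|V\|_\infty}p_{d,h}$. Lemma~\ref{lem:ell2normheat}, applied with $t=T$, then gives $\|w(T)\|_{\ell^2}\ge e^{-T\|V\|_\infty}/\kappa$, a constant independent of $n$ and $h$. The remainder term is $Ce^{-C/h^2}\|u_F\|^2/h^d = Ce^{-C/h^2}$, which tends to $0$ as $h_n\to0$.

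\textbf{Upper bound for the observation term.} We must bound $\int_0^T\|w(s)\|^2_{\ell^2(\omega)}\,ds$. Split $\omega$ into the near part $\omega\cap(x_n+Q_L)$ and the far part $\omega\setminus(x_n+Q_L)$.

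For the far part, use $p_{V,h}\le e^{T\|V\|_\infty}p_{d,h}$. For $s$ bounded below, Corollary~\ref{lem:remainderheat} gives a bound of order $e^{2T\|V\|_\infty}\gamma e^{-\nu L^2}$.

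For the near part and $s\ge\sigma$, the pointwise upper bound \eqref{eq:upper-bound} gives $|w(s,x)|^2\lesssim h^d\sigma^{-d}$. Summing over the near points yields $\sigma^{-d}\,h^d|\omega\cap h\Z^d\cap(x_n+Q_L)|$, which tends to $0$ as $n\to\infty$.

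The small-time layer $s\in(0,\sigma)$, where the kernel is still concentrated at $x_n$, needs separate treatment. Here use only $\|w(s)\|_{\ell^2}\le e^{T\|V\|_\infty}\|w(0)\|=e^{T\|V\|_\infty}$, so the contribution from $(0,\sigma)$ is at most $\sigma e^{2T\|V\|_\infty}$.

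\textbf{Main obstacle: uniformity in $h$ near $s=0$.} Corollary~\ref{lem:remainderheat} and Lemma~\ref{lem:ell2normheat} are stated only for $t\in[T_-,T_+]$ with $h<h_1(T_-,T_+,L)$, so all constants must be fixed before letting $n\to\infty$. The order of choices is as follows.
\begin{enumerate}
\item Choose $\sigma$ small so that $C\sigma e^{2T\|V\|_\infty}$ is at most a quarter of the lower bound $e^{-2T\|V\|_\infty}/\kappa^2$.
\item Apply Corollary~\ref{lem:remainderheat} with $T_-=\sigma$, $T_+=T$, and take $L$ large so that the far contribution is also at most a quarter of that lower bound.
\item Let $n\to\infty$, choosing $h_n<\min(h_0,h_1)$ with $h_n\to0$.
\end{enumerate}
With these choices the right side of the observability inequality is eventually smaller than the left side, which is a contradiction. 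Hence $\omega$ is thick at some scale $L$ with some $\gamma>0$.
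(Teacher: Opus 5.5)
\textbf{There is a genuine gap in your treatment of the small-time layer $s\in(0,\sigma)$.}

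With the paper's normalization the kernel starts from $h^{-d}\delta_y$. So $w(0)=h^{d/2}p_{V,h}(0,\cdot,x_n)=h^{-d/2}\delta_{x_n}$ and $\|w(0)\|_{\ell^2(h\Z^d)}=h^{-d/2}$, not $1$. Relatedly, since $u=h^dw$ you must divide the inequality by $h^{2d}$. The remainder then becomes $Ce^{-C/h^2}h^{-d}$, which is still harmless. Your bound on $(0,\sigma)$, however, becomes $\sigma e^{2T\|V\|_\infty}h^{-d}$, which blows up along $h_n\to0$.

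This is not a bookkeeping slip. The Fourier representation gives $\|w(s)\|_{\ell^2}^2\asymp\min(h^{-d},s^{-d/2})$, up to factors $e^{\pm 2T\|V\|_\infty}$. Hence $\int_0^\sigma\|w(s)\|^2\,ds$ is $O(\sqrt\sigma)$ uniformly in $h$ only when $d=1$. It grows like $\log(1/h)$ for $d=2$ and like $h^{2-d}$ for $d\ge3$.

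Restricting to $\omega$ does not save the argument. As $h\to0$, the contribution of $(0,\sigma)$ is at least of the order of $\int_{\omega\cap B(x_n,\sqrt\sigma)}|x-x_n|^{2-2d}\,dx$. This quantity is not controlled by $|\omega\cap(x_n+Q_L)|$. For instance, suppose that in the sparse regions $\omega$ is a periodic array of balls of radius $r$ and spacing $\delta$, with $(r/\delta)^d\to0$ but $\delta\to0$ much faster. Then every admissible $x_n$ lies within $\delta$ of $\omega$, and the integral is of order at least $(r/\delta)^d\delta^{2-d}$. This can tend to infinity when $d\ge3$, and no $\sigma$ fixed in advance can absorb it. So your test functions do not produce a contradiction for every non-thick $\omega$.

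\textbf{The missing idea is never to start from a point mass.} The paper takes as datum (after reversing time) the already-smoothed kernel $h^{d/2}p_{V,h}(1,\cdot,x_0)$. By the semigroup property the solution is $h^{d/2}p_{V,h}(t+1,\cdot,x_0)$, so only times in $[1,T+1]$ ever appear. Then:
\begin{enumerate}
\item Lemma~\ref{lem:ell2normheat} gives both the lower bound on the left-hand side and the smallness of the $e^{-C/h^2}$ remainder.
\item Corollary~\ref{lem:remainderheat} with $T_-=1$, $T_+=T+1$ controls the part of $\omega$ outside $Q_L(x_0)$.
\item The uniform bound $p_{V,h}\le\mu$ on $[1,T+1]$ controls the near part.
\end{enumerate}
Together these yield $T\mu^2h^d|\omega\cap h\Z^d\cap Q_L(x_0)|\ge c>0$, with $c$ independent of $x_0$ and $h$. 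Letting $h\to0$ gives $|\omega\cap Q_L(x_0)|\ge c$ directly. No contradiction argument, small-time layer or choice of $\sigma$ is needed. With this change, your near/far splitting and your order of choices go through essentially unchanged.
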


\begin{proof}
First changing $t$ into $T-t$, we may consider $u$ that satisfies
    \begin{equation}
    \label{eq:heatbegin}
    \begin{cases}
        \partial_t u =\Delta_h u-Vu, &(t,x)\in (0,T)\times h\Z^d,\\
        u(0,x)=u_0(x) &  x\in h\Z^d
    \end{cases}
\end{equation}
and the observability inequality,
\begin{equation}
    \|u(T,\cdot)\|_{\ell^2(h\Z^d)}^2\le C \int_0^T \| u(t,\cdot)\|_{\ell^2(\omega)}^2 \d t + Ce^{-C/h^2} \|u(0)\|_{\ell^2(h\Z^d)}^2.\label{b-3}
\end{equation}

Fix an arbitrary point $x_0\in h\Z^d$ and take
\begin{equation*}
    u_0(x)=h^{d/2}p_{V,h}(1,x,x_0),\quad x\in h\Z^d
\end{equation*}
and $u$ the corresponding solution of \eqref{eq:heatbegin}.

By the semigroup property, one has
\begin{equation}
    p_{V,h}(t,x,y)=h^d\sum_{z}p_{V,h}(s,x,z)p_{V,h}(t-s,z,y),\quad \forall x,y\in h\Z^d,\, 0<s<t.\label{b-2}
\end{equation}
so that we obtain the solution 
\begin{equation*}
    u(t,x)=h^{d/2}p_{V,h}(t+1,x,x_0),\quad (t,x)\in (0,\infty)\times h\Z^d.
\end{equation*}

Let us now bound each term appearing in \eqref{b-3}.
First, if $h\leq h_0$ from Lemma \ref{lem:ell2normheat}, then, with $\kappa$ given by that lemma,
$$
\|u(T,\cdot)\|_{\ell^2(h\Z^d)}^2\gtrsim e^{-2(T+1)\|V\|_\infty}\|h^{d/2}p_{h,d}(T+1,x_0,\cdot)\|_{\ell^2(h\Z^d)}^2
\geq \frac{e^{-2(T+1)\|V\|_\infty}}{\kappa^2}.
$$
On the other hand, still from Lemma \ref{lem:ell2normheat} we get
$$
Ce^{-C/h^2} \|u_0\|_{\ell^2(h\Z^d)}^2\leq Ce^{-C/h^2}\kappa^2e^{2(T+1)\|V\|_\infty}
\leq \frac{e^{-2(T+1)\|V\|_\infty}}{2\kappa^2}
$$
if $h$ is small enough. Up to replacing $h_0$ by a smaller one, we may thus assume that this holds.
Setting $\tilde\kappa=\sqrt{2/C}e^{2(T+1)\|V\|_\infty}\kappa$, \eqref{b-3} implies that
\begin{equation}
    \label{eq:conterexampleend}
\int_0^T \| u(t,\cdot)\|_{\ell^2(\omega)}^2 \d t\geq \frac{1}{\tilde\kappa^2}.
\end{equation}

Now we take $L>0$, $T_-=1$, $T_+=T+1$.
Next, we apply Corollary \ref{lem:remainderheat}: let $h_2=\min (h_0,h_1)$ ($h_1$ given by the corollary)
there are $\gamma,\nu$ (independent of $x_0$) such that, for $t\in(0,T)$ and $h\leq h_2$,
$$
\| u(t,\cdot)\|_{\ell^2(\omega\cap h\Z^d\setminus Q_L(x_0))}^2\leq
\| u(t,\cdot)\|_{\ell^2(h\Z^d\setminus Q_L(x_0))}^2\leq \gamma e^{-\nu L^2}\leq \frac{1}{2T\tilde\kappa^2}
$$
if $L$ is large enough. Then \eqref{eq:conterexampleend} implies
$$
\int_0^T \| u(t,\cdot)\|_{\ell^2(\omega\cap Q_L(x_0))}^2 \d t\geq \frac{1}{2\tilde\kappa^2}.
$$
Now we use the crude bound: $p_{h,d}(t,x,y)$ is bounded independently of $h$ and $t\geq 1$.
It follows that there is a $\mu=\mu(T,V)$ such that $p_{V,h}\leq \mu$ for $1\leq t\leq T+1$.
From this, we deduce that
$$
T\mu^2 h^d|\omega\cap Q_L(x_0)|\geq \frac{1}{2\tilde\kappa^2}
$$
for all $h$ small enough. Letting $h\to 0$ we obtain that $|\omega\cap Q_L(x_0)|\geq \frac{1}{2\tilde\kappa^2T\mu^2}$. Since every $x_0\in \R^d$ can be approximated by points $x_h\in h\Z^d$ as $h\to 0$ we obtain
the desired property.
\end{proof}

\section*{Acknowledgements}

The authors acknowledge the use of ChatGPT on their written text in order to polish the spelling, grammar, and general style.

This work was started during a research stay of Yunlei Wang at the University of the Basque Country, funded by Euskampus through LTC Transmath. He wishes to thank
this institution for its hospitality and financial support.

The authors wish to thank Aingeru Fern\'andez-Bertolin for valuable conversations and careful reading of the manuscript.

\smallskip

Y. Bourroux has benefited from state support managed by the Agence
Nationale de la Recherche (French National Research Agency) under reference ANR-20-SFRI-0001. This research is also supported by the Spanish Agencia Estatal de Investigaci\'on, through Grant PID2024-156267NB-I00 funded by MICIU/AEI/10.13039/501100011033 and cofunded by the European Union.

The authors were supported by the French National
Research Agency (ANR) under contract number ANR-24-CE40-5470.

\end{document}